\documentclass[11pt]{article}

\usepackage[a4paper,left=27mm,right=27mm,top=25mm,bottom=28mm]{geometry}
\usepackage{amsmath,amssymb,amsfonts,amsthm,mathtools,bm}
\usepackage{booktabs,array,enumitem,float,microtype,graphicx,subcaption}
\usepackage{xcolor}
\usepackage{hyperref}

\hypersetup{
  colorlinks=true,
  linkcolor=blue,
  citecolor=blue,
  urlcolor=blue,
  pdftitle={A Pressure-Free Virtual Element Method for the Surface Stokes Problem},
  pdfauthor={Jun Hu, Shengyang Xu, Hao Zhou},
  pdfkeywords={surface Stokes problem, virtual element method, nonconforming Stokes complex, pressure-robustness, surface topology, macroelement realization}
}
\allowdisplaybreaks

\newtheorem{theorem}{Theorem}[section]
\newtheorem{lemma}[theorem]{Lemma}

\theoremstyle{definition}

\newtheorem{assumption}[theorem]{Assumption}
\theoremstyle{remark}
\newtheorem{remark}[theorem]{Remark}
\numberwithin{equation}{section}

\newcommand{\R}{\mathbb R}
\newcommand{\Pk}{\mathbb P}
\newcommand{\Th}{\mathcal T_h}
\newcommand{\Eh}{\mathcal E_h}
\newcommand{\Vh}{\mathcal V_h}
\makeatletter
\newcommand{\norm}{\@ifnextchar[{\norm@with}{\norm@without}}
\def\norm@with[#1]#2{\lVert#2\rVert_{#1}}
\newcommand{\norm@without}[1]{\lVert#1\rVert}
\newcommand{\seminorm}{\@ifnextchar[{\seminorm@with}{\seminorm@without}}
\def\seminorm@with[#1]#2{\lvert#2\rvert_{#1}}
\newcommand{\seminorm@without}[1]{\lvert#1\rvert}
\newcommand{\widearc}{\mathpalette\svem@widearc}
\newcommand{\svem@widearc}[2]{\sbox\z@{$#1#2$}%
  \mathop{\vbox{\m@th\ialign{##\crcr
  \kern0.08em\svem@arcfill#1{0.8\wd\z@}\crcr
  \noalign{\nointerlineskip}$\hss#1#2\hss$\crcr}}}\nolimits}
\newcommand{\svem@arcfill}[2]{$\m@th\sbox\tw@{$#1($}%
  \hss\resizebox{#2}{\wd\tw@}{\rotatebox[origin=c]{-90}{\upshape(}}\hss$}
\makeatother
\newcommand{\bnu}{\bm\nu}
\newcommand{\bn}{\bm n}
\newcommand{\bt}{\bm t}
\newcommand{\bp}{\bm p}
\newcommand{\bx}{\bm x}
\newcommand{\bu}{\bm u}
\newcommand{\bv}{\bm v}
\newcommand{\bfv}{\bm f}
\newcommand{\bP}{\mathbf P}
\newcommand{\bH}{\mathbf H}
\newcommand{\bI}{\mathbf I}
\newcommand{\gradg}{\nabla_\gamma}
\newcommand{\gradh}{\nabla_h}
\newcommand{\divg}{\operatorname{div}_\gamma}
\newcommand{\divG}{\operatorname{div}_{\Gamma_h}}
\newcommand{\curlg}{\operatorname{curl}_\gamma}
\newcommand{\curlK}{\operatorname{curl}_K}
\newcommand{\divK}{\operatorname{div}_K}
\newcommand{\curlh}{\operatorname{curl}_h}
\newcommand{\divh}{\operatorname{div}_h}
\newcommand{\Phih}{\Phi_h^{r+1}}

\newcommand{\Sigmah}{\Sigma_h^r}
\newcommand{\Qh}{\mathring Q_h^{r-1}}

\newcommand{\jump}[1]{[\![#1]\!]}
\newcommand{\avg}[1]{\{#1\}}
\newcommand{\sym}{\operatorname{sym}}
\newcommand{\eps}{\varepsilon}

\title{A Pressure-Free Virtual Element Method for the Surface Stokes Problem}
\author{Jun Hu$^\dagger$, Shengyang Xu$^\dagger$, and Hao Zhou$^\dagger$}
\date{}

\begin{document}
\maketitle
\begingroup
\renewcommand{\thefootnote}{\fnsymbol{footnote}}
\footnotetext[2]{School of Mathematical Sciences, Peking University,
Beijing 100871, China (hujun@math.pku.edu.cn,
xushengyang0429@stu.pku.edu.cn, and zhouhao23@pku.edu.cn).}
\endgroup

\begin{abstract}
We develop a pressure-free virtual element method for the surface Stokes problem on polygonal approximations of closed surfaces of arbitrary genus. The method is built on a nonconforming Stokes complex with commuting interpolation and the correct discrete cohomology. Its velocity space is exactly tangential and \(H(\operatorname{div})\)-conforming, while a vertex-based edge constraint enforces continuity of tangential edge averages without additional degrees of freedom.
The resulting formulation requires no penalties, and a local divergence-preserving reconstruction provides a computable pressure-robust load. We also construct an explicit macroelement realization of the complex and show that the corresponding induced virtual formulation is algebraically equivalent to a direct macroelement Galerkin method.
We establish stability together with optimal first-order convergence in the broken \(H^1\) norm and second-order convergence in the \(L^2\) norm for the velocity. The edge constraint yields the second-order weak consistency estimate needed to recover the optimal \(L^2\) rate. Numerical experiments confirm the theoretical results.
\end{abstract}

\noindent\textbf{Key words.} Surface Stokes problem, virtual element method,
Stokes complex, pressure-robustness, macroelement

\noindent\textbf{MSC codes.} 65N30, 65N12, 65N15, 76D07
\medskip

\section{Introduction}\label{sec:introduction}

The surface Stokes equations describe viscous incompressible flows on curved
surfaces, with applications to fluid membrane relaxation
\cite{arroyo2009relaxation}, particle mobility in curved membranes
\cite{henle2010hydrodynamics}, and the mechanics of viscous material
interfaces \cite{jankuhn2018incompressible}.
Let $\gamma\subset\mathbb R^3$ be a connected, closed, orientable surface
with tangential projector $\bP$. Given a tangential force $\bfv$, we consider
the stationary surface Stokes problem: find a tangential velocity $\bu$ and a
mean-zero pressure $p$ such that
\begin{equation}\label{eq:continuous-stokes}
 -\bP\divg\eps_\gamma(\bu)+\bu+\gradg p=\bfv,
 \qquad \divg\bu=0
 \quad\text{on }\gamma.
\end{equation}
Here $\gradg$ and $\divg$ denote the surface gradient and divergence, and
$\eps_\gamma(\bu)$ is the surface strain-rate tensor.

Discretizing \eqref{eq:continuous-stokes} requires preserving both the
tangentiality and incompressibility of the velocity. Maintaining
tangentiality on a surface complicates the use of conventional continuous
vector elements. Trace and parametric surface finite element methods often
impose this constraint through a penalty on the normal component
\cite{olshanskii2021infsup,hardering2025parametric}. The surface BDM method
gives exactly tangential, divergence-free velocities and uses interior
penalties to control interelement jumps \cite{bonito2020divergence}.
Piola-based surface MINI, Taylor--Hood, and Scott--Vogelius methods enforce
tangentiality without normal-component or edge-jump penalties
\cite{demlow2024tangential,demlow2026taylorhood,kone2026divergence}.
Stream-function formulations eliminate pressure on simply connected closed
surfaces \cite{reusken2020stream,brandner2020error}; their
discretizations include a $C^0$ interior penalty method
\cite{neilan2025c0} and stabilized nonconforming methods
\cite{wu2025biharmonic,wuZhou2026morleyStokes}. On surfaces of positive
genus, harmonic components must additionally be represented, for example
through streamfunction--vorticity formulations or discrete Helmholtz--Hodge
decompositions \cite{brueers2025streamfunction,brueers2026pressure}.

Motivated by this structure, we construct a nonconforming Stokes complex on a polygonal approximation \(\Gamma_h\) of \(\gamma\),
\begin{equation}\label{eq:intro-complex}
 0\longrightarrow\mathbb R\longrightarrow\Phi_h
 \xrightarrow{\curlh}\Sigma_h
 \xrightarrow{\divh}\mathring Q_h\longrightarrow0.
\end{equation}
The complex admits commuting interpolation and preserves the cohomology dimension of the surface, so that the discrete divergence-free space consists of the discrete curl space together with the required finite-dimensional topological complement.
The planar stream virtual element construction of \cite{antonietti2014stream} provides a natural local starting point for this complex. Its extension to polygonal surfaces, however, requires a modification of the global assembly. Following the nodal transformation approach of~\cite{demlow2024tangential}, we couple vertex velocities by transfers. 

Vertex assembly alone does not in general enforce zero-mean tangential velocity jumps across surface edges, and hence does not provide the second-order weak consistency needed for optimal \(L^2\) convergence; see Remark~\ref{rem:uncorrected-surface-consistency}. We therefore prescribe shared tangential edge means from the vertex data. Together with a compatible scalar space, this additional constraint supplies the required weak continuity without introducing independent degrees of freedom.
The resulting velocity space is exactly tangential and \(H(\operatorname{div})\)-conforming. In the planar setting, the construction reduces to the standard lowest-order stream virtual element complex of \cite{antonietti2014stream}. The discrete spaces and Stokes form depend only on the polygonal mesh geometry and do not require the exact normal field of \(\gamma\).

Posing the Stokes problem on the full divergence-free space $\mathcal Z_h$
gives a pressure-free method on closed surfaces of arbitrary genus. For a
surface of genus $g$, the discrete curl space and a $2g$-dimensional
topological complement span $\mathcal Z_h$. The
virtual formulation uses an affine velocity projection and a stabilization
computed from the degrees of freedom; a local reconstruction makes the load
computable for general $L^2$ forces. The shared edge means cancel the leading
nonconformity term and yield the second-order weak consistency needed for
optimal $L^2$ convergence. Under the stated geometry, data approximation,
and regularity assumptions, we prove stability and optimal first-order broken
$H^1$ and second-order $L^2$ velocity convergence.

The same virtual spaces also support an alternative formulation that is
exactly equivalent to an explicit macroelement Galerkin method. Related
links between virtual and explicit elements involve Hsieh--Clough--Tocher
(HCT) boundary data on polygons \cite{chinosi2016virtual}, stiffness
decompositions for quadrilateral elements \cite{russo2024stabilization},
and macro spaces for computable gradient projections without extrinsic
stabilization \cite{chen2024extrinsic}. We retain the original degrees of
freedom. The representative maps preserve complete boundary traces, commute
with curl and divergence, and reproduce local polynomials. We use the exact
Stokes energy and load of reduced HCT representatives to define this
formulation. Its matrix and load vector
coincide with those of the direct macroelement method in the common
coordinates, including the topological couplings, so the numerical solutions
have identical coefficients and correspond under the representative map.
This equivalence also suggests a route to higher-order VEMs on curved
surfaces: explicit macro functions make integrals with nonpolynomial
geometric weights accessible by quadrature when standard virtual moments
do not determine them.

Numerical experiments on hybrid polygonal meshes and surfaces of different
genera confirm the predicted convergence. They also test invariance under gradient perturbations of the force, demonstrate the role of the edge correction in recovering
optimal $L^2$ convergence, and verify matrix and solution agreement between
the direct macroelement and macro-induced virtual formulations.

Section~\ref{sec:geometry} collects the geometric preliminaries and local
virtual element spaces. Section~\ref{sec:global-complex} constructs the
global complex and its commuting interpolants. Section~\ref{sec:method}
develops the pressure-free formulation of
\eqref{eq:continuous-stokes} and proves its stability.
Section~\ref{sec:explicit-representative} constructs the explicit
computational representative and the reduced macroelement realization.
Section~\ref{sec:error-estimates} establishes the error estimates, and
Section~\ref{sec:numerical} presents the numerical experiments.
Section~\ref{sec:conclusions} discusses the explicit-representative viewpoint
and possible higher-order and curved-surface extensions. Throughout,
$A\lesssim B$ means that $A$ is bounded by $B$ up to a positive constant,
and $A\simeq B$ means $A\lesssim B$ and $B\lesssim A$.

\section{Preliminaries}
\label{sec:geometry}

We assume that $\gamma$ is of class $C^4$.
For sufficiently small $\delta>0$, let
$U_\delta:=\{x\in\R^3:\operatorname{dist}(x,\gamma)<\delta\}$ be a tubular
neighborhood of $\gamma$, and denote the signed distance by $d$.
Extend the unit normal and tangential projector to $U_\delta$ by
$\bnu:=\nabla d$ and $\bP:=\bI-\bnu\otimes\bnu$, where $\bI$ is the identity matrix.
The Weingarten map $\bH:=\nabla^2d$ and closest-point projection
$\bp(x):=x-d(x)\bnu(x)$ satisfy $\nabla\bp=\bP-d\bH$ in $U_\delta$.

\subsection{Surface geometry and differential operators}
\label{subsec:admissible-meshes}

For a face, edge, surface patch, or surface domain $D$, write
$(\cdot,\cdot)_D$ for the $L^2(D)$ inner product and
$\|\cdot\|_{m,D}$, $|\cdot|_{m,D}$ for the $H^m(D)$ norm and seminorm,
$m\ge0$.  These conventions apply componentwise, with the Frobenius product
for matrices.  For a scalar space $X(D)\subset L^2(D)$, set
$\mathring X(D):=\{w\in X(D):(w,1)_D=0\}$.  The symbol $I$ denotes the
identity operator on the space clear from context.

Set $\bm H^m(D):=[H^m(D)]^3$ and, for $D$ on an oriented surface with normal
$\bnu_D$, $\bm H_t^m(D):=\{\bv\in\bm H^m(D):\bv\cdot\bnu_D=0\}$ and
$\bm L_t^2(D):=\bm H_t^0(D)$.  For a field $w$ on $\gamma$, let
$w^e:=w\circ\bp$, componentwise for vectors and tensors.  For scalar $w$ and
 tangential $\bv$, define
$\gradg w:=\bP\nabla w^e$, $\curlg w:=\bnu\times\gradg w$,
$\nabla_\gamma\bv:=\bP\nabla\bv^e\bP$, and
$\divg\bv:=\operatorname{tr}(\nabla_\gamma\bv)$.  Set
$\eps_\gamma(\bv):=\sym(\nabla_\gamma\bv)
=\tfrac12(\nabla_\gamma\bv+(\nabla_\gamma\bv)^T)$.
A tensor $\bm A$ is tangential if $\bm A=\bP\bm A\bP$; its surface
 divergence is taken row-wise.  We write
$\bm H(\divg;\gamma):=\{\bv\in\bm L_t^2(\gamma):
\divg\bv\in L^2(\gamma)\}$.

Let $\Gamma_h\subset U_\delta$ be a connected, closed, coherently oriented
polyhedral approximation of $\gamma$ with a conforming partition $\Th$ into
planar polygons, edge set $\Eh$, and vertex set $\Vh$.  Set
$h_K:=\operatorname{diam}(K)$, $h_e:=|e|$, and
$h:=\max_{K\in\Th}h_K$, with $0<h\le h_0\le1$ and $h_0$ fixed sufficiently
small.  Denote the unit normal on $K$ by $\bnu_K$ and set
$\bP_K:=\bI-\bnu_K\otimes\bnu_K$,
$\bnu_h|_K:=\bnu_K$, and $\bP_h|_K:=\bP_K$.
For a vertex $a$ of $K$, set
$T_aK:=\{\bx\in\R^3:\bx\cdot\bnu_K=0\}$.
Let $\mathcal V(K)$ be the vertex set of $K$ and $\omega_K$ the union of
faces sharing a vertex with $K$; set $\omega_e:=K\cup L$ when
$e=\partial K\cap\partial L$.  For any union $\omega$ of mesh faces, write
$\omega^\gamma:=\bp(\omega)$.  When $\bp:\Gamma_h\to\gamma$ is bijective,
define $\mu_h$ by
$\mathrm d\sigma(\bp(x))=\mu_h(x)\,\mathrm d\sigma_h(x)$, where
$\mathrm d\sigma$ and $\mathrm d\sigma_h$ are the surface measures on
$\gamma$ and $\Gamma_h$.

\begin{assumption}[Admissible polygonal surface family]\label{ass:mesh}
There is a constant $\rho>0$, independent of $h$, such that:
\begin{enumerate}[label=\textnormal{(M\arabic*)},leftmargin=2.5em]
 \item \label{mesh:surface-geometry}
 The closest-point map $\bp:\Gamma_h\to\gamma$ is bijective, and every
 $K\in\Th$ satisfies
 \begin{equation}\label{eq:geometry-estimates}
  \norm[L^\infty(K)]{d}\lesssim h_K^2,
  \qquad \norm[L^\infty(K)]{\bnu-\bnu_K}\lesssim h_K,
  \qquad \norm[L^\infty(K)]{1-\mu_h}\lesssim h_K^2.
 \end{equation}

 \item \label{mesh:star-shaped}
 Every $K\in\Th$ is star-shaped with respect to a disk of radius
 at least $\rho h_K$.

 \item \label{mesh:vertex-separation}
 Any two distinct vertices $a,b\in\mathcal V(K)$ satisfy
 $|a-b|\ge\rho h_K$.
\end{enumerate}
\end{assumption}

Condition~\ref{mesh:surface-geometry} makes $\bp:\Gamma_h\to\gamma$ a
homeomorphism.  Conditions~\ref{mesh:star-shaped}--\ref{mesh:vertex-separation}
are standard in planar and surface VEMs
\cite[assumptions (A1)--(A2)]{beirao2019stokescomplex}
\cite[Section~4.1, (A1)--(A2)]{frittelli2018surfacevem}.  They give
$h_e\simeq h_K$ for $e\subset\partial K$, uniformly bounded numbers of
edges per face and faces per vertex patch, and uniformly comparable face
 diameters within each vertex patch.  For each $K$, fix a disk from
\ref{mesh:star-shaped} with center $\bx_K^c$.

On each face $K$, $\nabla_K$, $D_K^2$, and $\divK$ denote the planar
 gradient, Hessian, and divergence, with $|w|_{2,K}:=\|D_K^2w\|_{0,K}$.
Set $\Delta_Kw:=\divK\nabla_Kw$, $J_K\bv:=\bnu_K\times\bv$,
$\curlK w:=J_K\nabla_Kw$ and $\eps_K(\bv):=\sym(\nabla_K\bv)$
for tangential $\bv$.
The operators $\gradh$, $\curlh$, $\divh$, and $\eps_h$ act facewise
by their counterparts with subscript $K$.

For $m\ge0$, set
$H_h^m(\Gamma_h):=\{w\in L^2(\Gamma_h):w|_K\in H^m(K)\ \forall K\in\Th\}$,
$\bm H_{t,h}^m(\Gamma_h):=\{\bv\in[H_h^m(\Gamma_h)]^3:
\bv|_K\cdot\bnu_K=0\ \forall K\in\Th\}$, and
$\bm L_{t,h}^2(\Gamma_h):=\bm H_{t,h}^0(\Gamma_h)$.
For a face union $\omega$, set
\[
 \|w\|_{m,h,\omega}^2:=\sum_{K\subset\omega}\|w\|_{m,K}^2,
 \qquad
 |w|_{m,h,\omega}^2:=\sum_{K\subset\omega}|w|_{m,K}^2,
\]
and suppress $\omega$ when $\omega=\Gamma_h$.
For a face or edge $D$, let $\Pk_m(D)$ be the polynomials of total degree
at most $m$, with $\Pk_m(D)=\{0\}$ for $m<0$, and let $\Pi_m^D$ be the
$L^2(D)$ projector onto $\Pk_m(D)$.  Set
$\Pk_m(\Th):=\{q_h\in L^2(\Gamma_h):q_h|_K\in\Pk_m(K)\ \forall K\in\Th\}$
and $(\Pi_m w)|_K:=\Pi_m^K(w|_K)$; the projectors act componentwise on
vector and tensor fields.

\paragraph{Edge orientations and jumps.}

For each $e\in\Eh$, fix an ordered pair $(K,L)$ of adjacent faces with
$e=\partial K\cap\partial L$ and a unit tangent $\bt$ used on both sides.
For $e\subset\partial K$, let $\bn_K$ be the outward unit conormal in the
plane of $K$.  Write $\mathrm ds$ for arclength and
$\mathrm ds_\gamma$ when distinguishing the curved-edge measure.
For a scalar trace from $K$, set
$\partial_{\bt}w:=\bt\cdot\nabla_Kw$ and
$\partial_{\bn_K}w:=\bn_K\cdot\nabla_Kw$.  Define
$\sigma_{K,e}\in\{\pm1\}$ by
$J_K\bn_K=\sigma_{K,e}\bt$.  Then
$\bn_K=-\sigma_{K,e}J_K\bt$ and
$\sigma_{L,e}=-\sigma_{K,e}$.  Hence
\begin{equation}\label{eq:edge-curl-signs}
 \curlK\phi\cdot\bn_K=-\sigma_{K,e}\partial_{\bt}\phi,
 \qquad
 \curlK\phi\cdot\bt=\sigma_{K,e}\partial_{\bn_K}\phi.
\end{equation}
For scalar or ambient-vector traces, set 
\begin{equation}\label{eq:edge-jump-average}
 \avg{w}:=\tfrac12(w|_K+w|_L),
 \qquad
\jump{w}:=w|_K-w|_L.
\end{equation}
In particular, the tangential jump is
$\jump{\bv\cdot\bt}=(\bv|_K-\bv|_L)\cdot\bt$.
For outward-conormal quantities, write
\begin{equation}\label{eq:edge-conormal-jumps}
 \jump{\partial_{\bn}w}
 :=\partial_{\bn_K}w|_K+\partial_{\bn_L}w|_L,
 \quad
 \jump{\bv\cdot\bn}
 :=\bv|_K\cdot\bn_K+\bv|_L\cdot\bn_L,
 \quad
 \avg{\bv\cdot\bn}
 :=\tfrac12\bigl(\bv|_K\cdot\bn_K-\bv|_L\cdot\bn_L\bigr).
\end{equation}
With these conventions, \eqref{eq:edge-curl-signs} gives
$
 \jump{\curlh\phi\cdot\bt}
 =\sigma_{K,e}\jump{\partial_{\bn}\phi}.
$
We reserve $\divG$ for the distributional surface divergence and set
$\bm H(\divG;\Gamma_h):=\{\bv\in\bm L_{t,h}^2(\Gamma_h):
\divG\bv\in L^2(\Gamma_h)\}$.
A piecewise $H^1$ tangential field $\bv$ belongs to this space if and only if
$\jump{\bv\cdot\bn}=0$ on every edge.

\subsection{Surface Piola and vertex transforms}
\label{sec:surface-piola-transform}
\label{sec:vertex-transforms}

We use the contravariant Piola transform in
\cite[Section~2.2]{demlow2024tangential}, specialized to the closest-point map
\(\bp:\Gamma_h\to\gamma\) and the surface Jacobian \(\mu_h\) from
Subsection~\ref{subsec:admissible-meshes}.
For a tangential field \(\bv_h\) on \(K\), the forward Piola transform
to \(K^\gamma\) is given by
\[
 (\mathcal P_{\bp}\bv_h)^e
 =\mu_h^{-1}(\bP-d\bH)\bv_h.
\]
Conversely, for a tangential field \(\bv\) on \(K^\gamma\), its Piola
pullback is
\begin{equation}\label{eq:surface-piola}
 \breve\bv:=\mathcal P_{\bp^{-1}}\bv
 =\mu_h\left(\bI-
       \frac{\bnu\otimes\bnu_h}{\bnu\cdot\bnu_h}\right)
       (\bI-d\bH)^{-1}\bv^e.
\end{equation}
If $\bv$ is defined
on all of $\gamma$, then $\breve\bv$ is understood facewise on $\Gamma_h$, and
we write $\breve\bv_K:=(\breve\bv)|_K$ for its restriction to $K$.
For a tangential field $\bv_h$ on $\Gamma_h$, we write
$\widearc{\bv_h}:=\mathcal P_{\bp}\bv_h$.
The two transforms are inverse, so $\widearc{\breve\bv}=\bv$ and
$\breve{\widearc{\bv_h}}=\bv_h$.
The two divergence identities are
\begin{equation}\label{eq:surface-piola-div}
 \divh\breve\bv=\mu_h(\divg\bv)^e,
 \qquad
 (\divg\widearc{\bv_h})^e
 =\mu_h^{-1}\divh\bv_h.
\end{equation}
The normal trace is preserved as well.  More precisely, if
\(\bv\in\bm H_t^1(K^\gamma)\),
\(e\subset\partial K\), \(e^\gamma:=\bp(e)\), and
\(\bn_{K^\gamma}\) is the outward unit conormal to \(K^\gamma\), then
\begin{equation}\label{eq:surface-piola-normal-trace}
 \int_e \breve\bv\cdot\bn_K\,q^e\,\mathrm{d}s
 =\int_{e^\gamma}\bv\cdot\bn_{K^\gamma}\,q\,\mathrm{d}s
 \qquad\forall q\in L^2(e^\gamma).
\end{equation}
For a sufficiently smooth scalar function \(w\), it holds that
\begin{equation}\label{eq:surface-piola-curl}
 \mathcal P_{\bp^{-1}}(\curlg w)=\curlh w^e,
 \qquad
 \mathcal P_{\bp}(\curlh w^e)=\curlg w.
\end{equation}

The standard local estimates for the surface Piola map give, for $K\in\Th$,
$m\in\{0,1,2\}$, and $\bv\in\bm H_t^m(K^\gamma)$,
\begin{equation}\label{eq:surface-piola-norm-equivalence}
 \|\breve\bv\|_{m,K}
 \simeq
 \|\bv\|_{m,K^\gamma}.
\end{equation}
The reverse estimate holds for $\mathcal P_{\bp}$, with constants uniform
under Assumption~\ref{ass:mesh}; see
\cite[Lemma~2.1]{demlow2024tangential}.

For each vertex $a\in\Vh$, choose one reference face $K_a$.
For $K\ni a$, define the velocity vertex transform
\cite[Definition~2.3]{demlow2024tangential} by
\begin{equation}\label{eq:vertex-transform}
 M_{a,K}\bx
 :=(\bnu_{K_a}\cdot\bnu_K)\bx
   -\bnu_{K_a}(\bnu_K\cdot\bx).
\end{equation}
Given \(\bnu_{K_a}\cdot\bnu_K\) is uniformly positive, \(M_{a,K}:T_aK_a\to T_aK\) is an isomorphism.  It satisfies
\begin{equation}\label{eq:vertex-transform-commuting}
 J_K\bP_K\bm g=M_{a,K}J_{K_a}\bm g
 \qquad\forall\bm g\in T_aK_a.
\end{equation}

\begin{lemma}[Edge components of the vertex transform]
\label{lem:vertex-transform-edge}
Let $e=\partial K\cap\partial L$, let $a\in\partial e$, and let
\(\bm g\in T_aK_a\).  Then
\begin{equation}\label{eq:vertex-transform-flux}
 (M_{a,K}\bm g)\cdot\bn_K
 +(M_{a,L}\bm g)\cdot\bn_L=0,
\end{equation}
and
\begin{equation}\label{eq:vertex-transform-tangent}
 |(M_{a,K}\bm g-M_{a,L}\bm g)\cdot\bt|
 \lesssim h_e^2|\bm g|.
\end{equation}
The hidden constant is independent of $h$, $e$, and the vertex patch.
\end{lemma}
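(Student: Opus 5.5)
Both identities come from direct computation with the explicit formula \eqref{eq:vertex-transform}, using the geometry of the common edge $e$. Write $\bm\nu_a:=\bnu_{K_a}$ and $c_K:=\bnu_a\cdot\bnu_K$, $c_L:=\bnu_a\cdot\bnu_L$. Since $\bt$ lies in both planes, $\bt\cdot\bnu_K=\bt\cdot\bnu_L=0$. Then $\{\bt,\bn_K,\bnu_K\}$ and $\{\bt,\bn_L,\bnu_L\}$ are orthonormal frames sharing $\bt$. The unit normals, up to orientation, are obtained from the conormals by a rotation about $\bt$. Coherent orientation gives $J_K\bn_K=\sigma_{K,e}\bt$ and $J_L\bn_L=-\sigma_{K,e}\bt$. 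Equivalently, $\bnu_K\times\bn_K=-\bnu_L\times\bn_L$, so $\bn_K\times\bnu_K=\bnu_L\times\bn_L$.

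\textbf{Flux identity.} From \eqref{eq:vertex-transform},
\[
(M_{a,K}\bm g)\cdot\bn_K=c_K\,\bm g\cdot\bn_K-(\bnu_a\cdot\bn_K)(\bnu_K\cdot\bm g).
\]
This is the $\bt$-component of a cross product: $(\bnu_a\times\bm g)\cdot(\bn_K\times\bnu_K)$ expands by Lagrange's identity to $(\bnu_a\cdot\bn_K)(\bm g\cdot\bnu_K)-(\bnu_a\cdot\bnu_K)(\bm g\cdot\bn_K)$. Hence
\[
(M_{a,K}\bm g)\cdot\bn_K=-(\bnu_a\times\bm g)\cdot(\bn_K\times\bnu_K).
\]
By orthonormality and the frame orientation, $\bn_K\times\bnu_K=\pm\bt$. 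The orientation relation above gives $\bn_L\times\bnu_L=-\bn_K\times\bnu_K$. Therefore the two fluxes are $\mp(\bnu_a\times\bm g)\cdot\bt$ with opposite signs, and \eqref{eq:vertex-transform-flux} follows. Equivalently, one can invoke \eqref{eq:vertex-transform-commuting}: $M_{a,K}\bm g=J_K\bP_K\bm h$ with $\bm h=-J_{K_a}\bm g$ (up to sign), and $J_K\bP_K\bm h\cdot\bn_K=-\bP_K\bm h\cdot J_K\bn_K=-\sigma_{K,e}\bm h\cdot\bt$, which is antisymmetric in $K\leftrightarrow L$. I would present this cleaner route.

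\textbf{Tangential estimate.} We have
\[
(M_{a,K}\bm g-M_{a,L}\bm g)\cdot\bt=(c_K-c_L)\,\bm g\cdot\bt,
\]
because the second terms vanish: $\bnu_K\cdot\bm g$ multiplies $\bnu_a\cdot\bt$, and similarly for $L$. Indeed the second term is $-(\bnu_a\cdot\bt)(\bnu_K\cdot\bm g)$, which is not zero in general. So I would instead use the commuting route again:
\[
M_{a,K}\bm g\cdot\bt=J_K\bP_K\bm h\cdot\bt=-\bm h\cdot J_K\bt=\bm h\cdot\sigma_{K,e}\bn_K,
\]
using $\bn_K=-\sigma_{K,e}J_K\bt$. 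The difference is then
\[
\sigma_{K,e}\,\bm h\cdot(\bn_K+\bn_L).
\]

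\textbf{Bounding $|\bn_K+\bn_L|$.} Let $\theta$ be the dihedral deviation. Then $|\bn_K+\bn_L|\simeq|\bnu_K-\bnu_L|\simeq\theta$, which by \eqref{eq:geometry-estimates} is only $O(h_e)$. This gives first order only, so the extra factor must come from $\bm h$. Here $\bm h\in T_aK_a$ is orthogonal to $\bnu_a$. Decompose $\bn_K+\bn_L$ in the frame $\{\bt,\bn_K,\bnu_K\}$: it has no $\bt$-component, and its normal component is of size $\theta$ while its in-plane component is of size $\theta^2$. Hence
\[
\bm h\cdot(\bn_K+\bn_L)=\bm h\cdot\bigl(\text{component along }\bnu\text{-ish direction}\bigr)+O(\theta^2)|\bm h|.
\]
The first term involves $\bm h\cdot\bnu_K=\bm h\cdot(\bnu_K-\bnu_a)=O(h)$, since $K$ and $K_a$ lie in one vertex patch and both are $O(h)$-close to $\bnu(a)$. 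Thus the total is $O(\theta\,h+\theta^2)\lesssim h_e^2|\bm g|$.

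\textbf{Main obstacle.} The expected difficulty is precisely this second-order cancellation. It requires writing $\bn_K+\bn_L=(\bn_K\cdot\bnu_L)\bnu_L$-type terms explicitly, and controlling $|\bnu_K-\bnu_a|$ uniformly over the patch. This uses \eqref{eq:geometry-estimates} and the comparability of face diameters in the vertex patch (by \ref{mesh:star-shaped}--\ref{mesh:vertex-separation}).
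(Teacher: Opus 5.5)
Your proof is correct, but your final route for the tangential estimate differs from the paper's. The paper cites \eqref{eq:vertex-transform-flux} from Demlow--Neilan. It proves \eqref{eq:vertex-transform-tangent} by expanding \eqref{eq:vertex-transform} in the $\bt$-slot, which is exactly the line you wrote first and then abandoned. You were right that the second terms do not vanish, but their difference is $-(\bnu_{K_a}\cdot\bt)\,((\bnu_K-\bnu_L)\cdot\bm g)$. This is a product of two $O(h_e)$ factors, since $\bnu_{K_a}\cdot\bt=(\bnu_{K_a}-\bnu_K)\cdot\bt$. Meanwhile $c_K-c_L=\tfrac12\bigl(|\bnu_{K_a}-\bnu_L|^2-|\bnu_{K_a}-\bnu_K|^2\bigr)$ is quadratic, so that route closes in two lines and does not even need $\bm g\in T_aK_a$.

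Your final route instead goes through \eqref{eq:vertex-transform-commuting} with $\bm h=-J_{K_a}\bm g$. This gives $(M_{a,F}\bm g)\cdot\bn_F=-\sigma_{F,e}\,\bm h\cdot\bt$ and $(M_{a,F}\bm g)\cdot\bt=\sigma_{F,e}\,\bm h\cdot\bn_F$ for $F\in\{K,L\}$. What this buys is a self-contained proof of the flux identity and a clear mechanism. The tangential defect is $\sigma_{K,e}\,\bm h\cdot\bP_{K_a}(\bn_K+\bn_L)$, so the lemma reduces to $|\bP_{K_a}\jump{\bn}|\lesssim h_e^2$. This is the discrete analogue of the bound $|\bP\jump{\bn}|\lesssim h_e^2$ used later in Lemma~\ref{lem:weak-strain-interpolation} and Appendix~\ref{app:geometric-edge-error}.

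When you write it up, replace the heuristic ``$\bnu$-ish direction'' and the angle $\theta$ with exact identities:
\begin{itemize}
\item $\bn_K+\bn_L=\sigma_{K,e}(\bnu_L-\bnu_K)\times\bt$;
\item $(\bn_K+\bn_L)\cdot\bn_K=\tfrac12|\bn_K+\bn_L|^2$;
\item $(\bn_K+\bn_L)\cdot\bnu_K=\bn_L\cdot(\bnu_K-\bnu_L)$;
\item $\bm h\cdot\bnu_K=\bm h\cdot(\bnu_K-\bnu_{K_a})$.
\end{itemize}
Together with $(\bn_K+\bn_L)\cdot\bt=0$, these bound the defect by $\bigl(\tfrac12|\bnu_K-\bnu_L|^2+|\bnu_K-\bnu_L|\,|\bnu_K-\bnu_{K_a}|\bigr)|\bm g|$. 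Then \eqref{eq:geometry-estimates} and the comparability of face diameters in the vertex patch give $\lesssim h_e^2|\bm g|$.
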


\begin{proof}
Identity~\eqref{eq:vertex-transform-flux} is the vertex identity in the
proof of \cite[Proposition~3.1]{demlow2024tangential}.
Moreover,
\[
 (M_{a,K}\bm g-M_{a,L}\bm g)\cdot\bt
 =\bnu_{K_a}\cdot(\bnu_K-\bnu_L)(\bm g\cdot\bt)
 -((\bnu_K-\bnu_L)\cdot\bm g)(\bnu_{K_a}\cdot\bt).
\]
For unit normals and $F\in\{K,L\}$,
$\bnu_{K_a}\cdot\bnu_F
=1-\tfrac12 |\bnu_{K_a}-\bnu_F|^2$.
The geometry bounds prove \eqref{eq:vertex-transform-tangent}.
\end{proof}

The relation between the Piola pullback and the vertex transform is
the following result \cite[Lemma~2.5]{demlow2024tangential}.
The tangential-jump estimate follows from the pointwise bound
$|\breve\bv_K-\bP_K\bv^e|\lesssim h_K^2|\bv^e|$ in
\cite[Lemma~2.3]{wu2025biharmonic} and $\bP_K\bt=\bt$.

\begin{lemma}[Vertex transform comparison]
\label{lem:vertex-transform-comparison}
Let $\bm g$ be tangential to $\gamma$ at $\bp(a)$, and let
$\breve{\bm g}_K(a)$ denote its Piola pullback to $K$ at $a$.  For every
face $K\ni a$,
\begin{equation}\label{eq:vertex-transform-error}
 |\breve{\bm g}_K(a)-M_{a,K}\breve{\bm g}_{K_a}(a)|
 \lesssim h_a^2|\breve{\bm g}_{K_a}(a)|
 \lesssim h_a^2|\bm g|,
 \qquad
 h_a:=\max_{K'\ni a}h_{K'}.
\end{equation}
\end{lemma}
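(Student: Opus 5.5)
The plan is to write both quantities in terms of the exact surface geometry at the single point \(\bp(a)\) and compare them algebraically. Set \(\bnu_*:=\bnu(\bp(a))\). Since \(a\) is a vertex of \(\Gamma_h\), \ref{mesh:surface-geometry} gives \(|d(a)|\lesssim h_a^2\). The second bound in \eqref{eq:vertex-transform-error} is then immediate from \eqref{eq:surface-piola}, because \(\mu_h\), \((\bI-d\bH)^{-1}\) and the oblique projector are uniformly bounded. So the work is in the first inequality. Its right-hand side \(h_a^2|\breve{\bm g}_{K_a}(a)|\) is equivalent to \(h_a^2|\bm g|\), since the Piola map at a point is an isomorphism with uniformly bounded inverse.

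\textbf{Step 1 (reduce to tangential projections).} I use the pointwise bound \(|\breve{\bm g}_F(a)-\bP_F\bm g|\lesssim h_F^2|\bm g|\) from \cite[Lemma~2.3]{wu2025biharmonic}. It follows from \eqref{eq:surface-piola} because \(\mu_h=1+O(h^2)\) and \((\bI-d\bH)^{-1}=\bI+O(h^2)\), and because \(\bm g\perp\bnu_*\) gives \((\bI-\bnu\otimes\bnu_F/(\bnu\cdot\bnu_F))\bm g=\bm g-\bnu_*(\bnu_F\cdot\bm g)/(\bnu_*\cdot\bnu_F)\). This last expression differs from \(\bP_F\bm g=\bm g-\bnu_F(\bnu_F\cdot\bm g)\) by \(O(|\bnu_*-\bnu_F|\,|\bnu_F\cdot\bm g|)=O(h^2|\bm g|)\), since \(\bnu_F\cdot\bm g=(\bnu_F-\bnu_*)\cdot\bm g=O(h)\). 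I apply this with \(F=K\) and \(F=K_a\). Boundedness of \(M_{a,K}\) then reduces the claim to
\[
|\bP_K\bm g-M_{a,K}\bP_{K_a}\bm g|\lesssim h_a^2|\bm g|.
\]

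\textbf{Step 2 (algebraic identity).} I expand this using \eqref{eq:vertex-transform}, writing \(\bm g'=\bP_{K_a}\bm g\) and \(c=\bnu_{K_a}\cdot\bnu_K=1-\tfrac12|\bnu_{K_a}-\bnu_K|^2\). This gives
\[
M_{a,K}\bm g'=c\,\bm g'-\bnu_{K_a}(\bnu_K\cdot\bm g').
\]
The factor \(1-c\) is \(O(h_a^2)\), because \(\bnu_{K_a}\) and \(\bnu_K\) both lie within \(O(h_a)\) of \(\bnu_*\). Here the uniform comparability of face diameters in the vertex patch is used. It therefore remains to compare \(\bm g'-\bnu_{K_a}(\bnu_K\cdot\bm g')\) with \(\bm g-\bnu_K(\bnu_K\cdot\bm g)\). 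Substituting \(\bm g'=\bm g-\bnu_{K_a}(\bnu_{K_a}\cdot\bm g)\), the difference becomes a sum of terms, each a product of two of the small quantities \(\bnu_{K_a}\cdot\bm g\), \(\bnu_K\cdot\bm g\), and \(\bnu_K-\bnu_{K_a}\) (each \(O(h_a)\)). Every such product is \(O(h_a^2|\bm g|)\).

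\textbf{Main obstacle.} The delicate point is the bookkeeping in Step 2. One must verify that every first-order term cancels exactly, leaving only products of two \(O(h)\) quantities. For this I use \(\bm g\cdot\bnu_*=0\) and the identity \(1-c=\tfrac12|\bnu_{K_a}-\bnu_K|^2\), the same device as in the proof of Lemma~\ref{lem:vertex-transform-edge}.
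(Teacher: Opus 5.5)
Your proof is correct, and it does more than the paper does. The paper gives no argument for this lemma and simply quotes \cite[Lemma~2.5]{demlow2024tangential}; you verify it directly from \eqref{eq:surface-piola} and \eqref{eq:vertex-transform}.

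You use two ingredients the paper already has nearby:
\begin{itemize}
\item the pointwise bound $|\breve{\bm g}_F(a)-\bP_F\bm g|\lesssim h_F^2|\bm g|$, which the paper cites from \cite[Lemma~2.3]{wu2025biharmonic} for Lemma~\ref{lem:edge-piola-tangential-comparison}, and which you rederive correctly via $\bnu_F\cdot\bm g=(\bnu_F-\bnu_*)\cdot\bm g$;
\item the identity $\bnu_{K_a}\cdot\bnu_K=1-\tfrac12|\bnu_{K_a}-\bnu_K|^2$ from the proof of Lemma~\ref{lem:vertex-transform-edge}.
\end{itemize}

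The bookkeeping you single out as the main obstacle closes in one line. With $c=\bnu_{K_a}\cdot\bnu_K$, for every $\bm g\in\mathbb R^3$,
\[
 \bP_K\bm g-M_{a,K}\bP_{K_a}\bm g=(1-c)\,\bm g-(\bnu_K\cdot\bm g)(\bnu_K-\bnu_{K_a}).
\]
Both terms are $O(h_a^2|\bm g|)$ once $\bm g\perp\bnu_*$. You also do not need comparability of face sizes in the vertex patch, since $h_K,h_{K_a}\le h_a$ holds by the definition of $h_a$ as a maximum.

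The citation buys brevity. Your route buys a self-contained proof that shows exactly which inputs are used: the three bounds of \ref{mesh:surface-geometry} evaluated at the vertex, plus two-sided boundedness of the pointwise Piola map to pass between $|\bm g|$ and $|\breve{\bm g}_{K_a}(a)|$. As a result, one need not check that the hypotheses of the cited lemma match Assumption~\ref{ass:mesh}.
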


\begin{lemma}[Tangential jump of Piola pullbacks]
\label{lem:edge-piola-tangential-comparison}
Let $e=\partial K\cap\partial L$ and
$\bv\in\bm H_t^1(\omega_e^\gamma)$. Then
\begin{equation}
 \|\jump{\breve\bv\cdot\bt}\|_{0,e}
 \lesssim h_e^2\left(
 h_e^{-1/2}\|\bv\|_{0,\omega_e^\gamma}
 +h_e^{1/2}|\bv|_{1,\omega_e^\gamma}\right).
 \label{eq:edge-piola-tangential-comparison}
\end{equation}
\end{lemma}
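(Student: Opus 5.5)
The plan is to reduce the tangential jump of the two Piola pullbacks to a pointwise comparison with the ambient extension $\bv^e$. This is the route indicated just before the statement. The key input is the pointwise bound from \cite[Lemma~2.3]{wu2025biharmonic},
\[
 |\breve\bv_F-\bP_F\bv^e|\lesssim h_F^2|\bv^e|\quad\text{on }F,\qquad F\in\{K,L\}.
\]
This bound is applied to the traces on $e$ from each side. Since $\bt$ lies in the plane of both $K$ and $L$, we have $\bP_K\bt=\bP_L\bt=\bt$, and hence $\bP_K\bv^e\cdot\bt=\bv^e\cdot\bt=\bP_L\bv^e\cdot\bt$. The extension $\bv^e=\bv\circ\bp$ is single valued on $e$, because it is the trace of an $\bm H^1_t(\omega_e^\gamma)$ function composed with the continuous map $\bp$. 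Therefore
\[
 \jump{\breve\bv\cdot\bt}=(\breve\bv_K-\bP_K\bv^e)\cdot\bt-(\breve\bv_L-\bP_L\bv^e)\cdot\bt,
\]
and pointwise on $e$ we get $|\jump{\breve\bv\cdot\bt}|\lesssim h_e^2|\bv^e|$, using $h_K\simeq h_L\simeq h_e$ from Assumption~\ref{ass:mesh}.

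Squaring and integrating over $e$ gives $\|\jump{\breve\bv\cdot\bt}\|_{0,e}\lesssim h_e^2\|\bv^e\|_{0,e}$. It remains to bound $\|\bv^e\|_{0,e}$. Two routes are available.

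The first route maps the edge to the curved edge $e^\gamma=\bp(e)$. The arclength distortion of $\bp|_e$ is $1+O(h^2)$, since $\nabla\bp=\bP-d\bH$ and the geometry estimates \eqref{eq:geometry-estimates} hold. This gives $\|\bv^e\|_{0,e}\simeq\|\bv\|_{0,e^\gamma}$. The scaled trace inequality on the curved face $K^\gamma$ then gives
\[
 \|\bv\|_{0,e^\gamma}^2\lesssim h_e^{-1}\|\bv\|_{0,K^\gamma}^2+h_e|\bv|_{1,K^\gamma}^2.
\]
This inequality is obtained by pulling back to the flat face $K$ by $\bp$ and using the flat trace inequality there; \ref{mesh:star-shaped} gives its uniform constant.

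The second route applies the flat trace inequality on $K$ directly to $\bv^e$. It then uses $\|\bv^e\|_{0,K}\simeq\|\bv\|_{0,K^\gamma}$ and $|\bv^e|_{1,K}\lesssim\|\bv\|_{1,K^\gamma}$, which hold because $\bp$ is a uniformly bi-Lipschitz $C^3$ diffeomorphism from $K$ onto $K^\gamma$.

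Either route yields
\[
 \|\jump{\breve\bv\cdot\bt}\|_{0,e}\lesssim h_e^2\bigl(h_e^{-1/2}\|\bv\|_{0,\omega_e^\gamma}+h_e^{1/2}|\bv|_{1,\omega_e^\gamma}\bigr).
\]

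The main obstacle is the chain-rule step in the second route. Differentiating $\bv\circ\bp$ yields the full ambient derivative of $\bv$ composed with $\bP-d\bH$. This could produce a term $|\bv|_{0}$ weighted by curvature, which would not obviously fit the stated seminorm structure. I expect this term to be harmless, since such a contribution is bounded by $h_e^{1/2}\|\bv\|_{0}\le h_e^{-1/2}\|\bv\|_0$ when $h_e\le1$. If one prefers, the first route avoids the issue entirely, because the trace inequality there is written intrinsically on $K^\gamma$.

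A second point to check is that the Wu bound from \cite{wu2025biharmonic} is pointwise, so that it applies on the edge traces and not only in $L^2(K)$. It is pointwise, since $\breve\bv_K-\bP_K\bv^e$ equals a geometric matrix of size $O(h^2)$ applied to $\bv^e$, by \eqref{eq:surface-piola} combined with \eqref{eq:geometry-estimates}. So the trace of the difference is controlled by the trace of $\bv^e$.
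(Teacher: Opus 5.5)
Your proposal is correct and follows the paper's argument. The paper likewise combines the pointwise bound $|\breve\bv_K-\bP_K\bv^e|\lesssim h_K^2|\bv^e|$ from \cite[Lemma~2.3]{wu2025biharmonic} with $\bP_K\bt=\bP_L\bt=\bt$, and leaves implicit the trace-inequality step that you spell out. Two small precisions: the geometric matrix in $\breve\bv_K-\bP_K\bv^e$ is $O(h^2)$ only on vectors tangent to $\gamma$, which suffices since $\bv^e=\bP\bv^e$; and your first route is not really independent of the second, because proving the curved trace inequality by pulling back through $\bp$ is the same chain-rule computation (harmless either way, as you note).
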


\subsection{The local virtual element spaces}\label{sec:local-complex}
We introduce the local spaces and degrees of freedom used to construct the
VEM Stokes complex.  In Section~\ref{sec:global-assembly}, the velocity
tangential edge moments and the stream-function edge degrees of freedom will be
determined by the vertex data, reducing the number of independent degrees
of freedom.
For $m\ge0$, write
\[
 B_m(\partial K):=\{w\in C^0(\partial K):w|_e\in\Pk_m(e)
       \ \forall e\subset\partial K\}.
\]

The local velocity space is
\begin{equation}\label{eq:local-velocity-space}
 \begin{aligned}
 \Sigma(K):=\{\bv\in[H^1(K)]^2:\;&
 \bv|_{\partial K}\in[B_2(\partial K)]^2,\quad
 \divK\bv\in\Pk_0(K),\\
 &-\Delta_K\bv-\nabla_Ks=0
       \text{ for some }s\in L^2(K)/\mathbb R\}.
 \end{aligned}
\end{equation}
In moment coordinates, its degrees of freedom are
\begin{enumerate}[label=\textnormal{(V\arabic*)},leftmargin=12mm]
 \item\label{dof:velocity-vertex}
       the vector value $\bv(a)$ at every vertex $a$ of $K$;
 \item\label{dof:velocity-edge}
       $\displaystyle\frac1{|e|}\int_e\bv\cdot\bn_K\,\mathrm{d}s$ and
       $\displaystyle\frac1{|e|}\int_e\bv\cdot\bt\,\mathrm{d}s$ for every $e\subset\partial K$.
\end{enumerate}

The local stream-function space is
\begin{equation}\label{eq:local-stream-space}
 \begin{aligned}
 \Phi(K):=\{\phi\in H^2(K):\;
 \phi|_{\partial K}\in B_3(\partial K),\quad
 \nabla_K\phi|_{\partial K}\in[B_2(\partial K)]^2,\quad
\Delta_K^2\phi=0\}.
 \end{aligned}
\end{equation}
For $\phi\in\Phi(K)$, take the following degrees of freedom:
\begin{enumerate}[label=\textnormal{(S\arabic*)},leftmargin=12mm]
 \item\label{dof:stream-vertex}
       $\phi(a)$ and $\nabla_K\phi(a)$ at every vertex $a$ of $K$;
 \item\label{dof:stream-edge-normal}
       $\displaystyle\frac1{|e|}\int_e\partial_{\bn_K}\phi\,\mathrm{d}s$ for every
       $e\subset\partial K$.
\end{enumerate}
The degrees of freedom \ref{dof:velocity-vertex}--\ref{dof:velocity-edge}
determine $\bv|_{\partial K}$.  The constant divergence follows from the normal
trace by the divergence theorem.  The degrees of freedom
\ref{dof:stream-vertex}--\ref{dof:stream-edge-normal} determine
$\phi|_{\partial K}$ and $\nabla_K\phi|_{\partial K}$.  Neither space has cell degrees of freedom.

The unisolvence of these two spaces follows from existence and uniqueness for the Stokes and biharmonic boundary-value
problems, while the polynomial inclusions follow directly
from the definitions.

\begin{lemma}[Local unisolvence and polynomial inclusion]
\label{lem:local-space-unisolvence}
Let $K\in\Th$ be a polygonal element.
\begin{enumerate}[label=\textnormal{(\roman*)},leftmargin=10mm]
 \item The degrees of freedom
       \ref{dof:velocity-vertex}--\ref{dof:velocity-edge} are unisolvent
       for $\Sigma(K)$, and $[\Pk_1(K)]^2\subset\Sigma(K)$.
 \item The degrees of freedom
       \ref{dof:stream-vertex}--\ref{dof:stream-edge-normal} are unisolvent
       for $\Phi(K)$, and $\Pk_3(K)\subset\Phi(K)$.
\end{enumerate}
\end{lemma}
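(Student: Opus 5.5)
For each part I will use a dimension count plus injectivity. That is, I show that the only element of the space with vanishing degrees of freedom is zero, and that the number of degrees of freedom equals the dimension of the space. Equivalently, I show that the map from the space to the degree-of-freedom vector is bijective, by solving the defining PDE with prescribed boundary data.

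\emph{Velocity space.} Let \(n\) be the number of vertices (and edges) of \(K\). A trace \(\bm g\in[B_2(\partial K)]^2\) is fixed by its vertex values and one further scalar per edge and component. Hence \(\bm g\) is determined exactly by the \(2n\) vertex values \ref{dof:velocity-vertex} and the \(2n\) edge moments \ref{dof:velocity-edge}: on each edge, the mean of the quadratic together with its endpoint values fixes it. The normal and tangential means give the two components because \(\bn_K,\bt\) form a basis of the plane.

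Given such data, define the constant \(c:=|K|^{-1}\int_{\partial K}\bm g\cdot\bn_K\,\mathrm ds\). Then solve the Stokes problem
\[-\Delta_K\bv-\nabla_Ks=0,\qquad \divK\bv=c,\qquad \bv|_{\partial K}=\bm g.\]
The compatibility condition \(\int_K c=\int_{\partial K}\bm g\cdot\bn_K\) holds by construction. Since \(K\) is a star-shaped Lipschitz polygon, the inf-sup condition gives a unique solution \(\bv\in[H^1(K)]^2\), with \(s\in L^2(K)/\R\). Here \(\bm g\in[H^{1/2}(\partial K)]^2\) because it is continuous and piecewise polynomial.

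Conversely, every \(\bv\in\Sigma(K)\) has its constant divergence forced to equal \(c\) by the divergence theorem. So \(\bv\) is the unique solution associated with its own trace, and the degree-of-freedom map is bijective.

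For the inclusion, take \(\bv\in[\Pk_1(K)]^2\). Its trace is piecewise linear, hence in \([B_2]^2\). Moreover \(\divK\bv\) is constant and \(\Delta_K\bv=0\), so \(s=0\) works.

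\emph{Stream space.} The traces \(\phi|_{\partial K}\in B_3\) and \(\nabla_K\phi|_{\partial K}\in[B_2]^2\) must be compatible: along each edge, \(\partial_{\bt}\phi\) must equal the tangential component of the gradient trace. Given the vertex data \(\phi(a),\nabla_K\phi(a)\), the trace \(\phi|_e\) is the cubic Hermite interpolant. Its derivative \(\partial_{\bt}\phi|_e\) is then a quadratic consistent with the prescribed vertex tangential derivatives. The normal derivative \(\partial_{\bn_K}\phi|_e\) is a quadratic with prescribed endpoint values and prescribed mean \ref{dof:stream-edge-normal}, so it is uniquely determined. This produces a unique compatible pair \((\varphi,\bm\psi)\), with \(\varphi\in B_3\) and \(\bm\psi\in[B_2]^2\).

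The main obstacle is to check that this pair is the trace of some \(H^2(K)\) function. One needs the compatibility conditions at corners: \(\bm\psi\) must be single-valued at each vertex, which holds since the full gradient is prescribed there, and \(\partial_{\bt}\varphi=\bm\psi\cdot\bt\) must hold on each edge. Under these conditions the trace theorem on Lipschitz polygons, in the Grisvard form for \(H^2\), supplies a lifting. For piecewise polynomial data one can avoid the general theorem: build the lifting explicitly, for instance from a sub-triangulation by HCT-type macro functions, or by local corner polynomials cut off smoothly.

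With a lifting \(\phi_0\) in hand, solve \(\Delta_K^2\phi_1=-\Delta_K^2\phi_0\) with \(\phi_1\in H_0^2(K)\). This is uniquely solvable by Lax--Milgram with the form \((D_K^2\cdot,D_K^2\cdot)_K\). Then \(\phi=\phi_0+\phi_1\) is the unique biharmonic function with the given traces. Injectivity follows because vanishing degrees of freedom give zero traces, so \(\phi\in H_0^2(K)\) is biharmonic and therefore \(\phi=0\).

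Finally, for \(\Pk_3(K)\subset\Phi(K)\): if \(\phi\in\Pk_3(K)\), then \(\Delta_K^2\phi=0\), the trace \(\phi|_e\) is cubic, and \(\nabla_K\phi|_e\) is quadratic and continuous. Hence \(\phi\in\Phi(K)\).
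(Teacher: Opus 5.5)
Your proposal is correct and follows essentially the paper's route. The paper only remarks that unisolvence follows from existence and uniqueness for the Stokes and biharmonic boundary-value problems, and that the polynomial inclusions follow from the definitions; your details (the trace determination by the degrees of freedom, the compatibility constant $c$ for the divergence, and the corner-compatible $H^2$ lifting of the Cauchy data, which the HCT construction of Section~\ref{sec:explicit-representative} realizes explicitly) fill in what the paper leaves implicit.
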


On the simply connected polygon $K$, the divergence-free Stokes fields in
$\Sigma(K)$ are precisely the curls of the biharmonic functions in $\Phi(K)$.
In planar coordinates,
$\bx/2\in\Sigma(K)$ has divergence one, so the following local complex is
exact.
\begin{lemma}[Local VEM Stokes complex]
\label{lem:local-stokes-complex}
Let $K\in\Th$.  The local spaces defined in
\eqref{eq:local-velocity-space} and \eqref{eq:local-stream-space} form the
exact sequence
\[
 0\longrightarrow\mathbb R\xrightarrow{\,\subset\,}
 \Phi(K)\xrightarrow{\,\curlK\,}
 \Sigma(K)\xrightarrow{\,\divK\,}
 \Pk_0(K)\longrightarrow0.
\]
At the last two spaces, exactness reads
\[
 \divK\Sigma(K)=\Pk_0(K),
 \qquad \ker(\divK|_{\Sigma(K)})=\curlK\Phi(K).
\]
\end{lemma}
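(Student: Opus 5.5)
The plan is to verify exactness at each position, using Lemma~\ref{lem:local-space-unisolvence} for the function spaces and standard facts about simply connected polygons.

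\textbf{Well-definedness.} First I check that $\curlK$ maps $\Phi(K)$ into $\Sigma(K)$ and $\divK$ maps $\Sigma(K)$ into $\Pk_0(K)$. The second is built into \eqref{eq:local-velocity-space}. For $\phi\in\Phi(K)$, set $\bv:=\curlK\phi$; it has the following properties.
\begin{itemize}
\item Since $\phi\in H^2(K)$, we have $\bv\in[H^1(K)]^2$.
\item Since $\nabla_K\phi|_{\partial K}\in[B_2(\partial K)]^2$ and $J_K$ is a constant rotation, $\bv|_{\partial K}\in[B_2(\partial K)]^2$.
\item $\divK\bv=0\in\Pk_0(K)$.
\end{itemize}
For the Stokes condition, note that in planar coordinates $\Delta_K\curlK\phi=\curlK\Delta_K\phi$. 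Put $\omega:=\Delta_K\phi$. Since $\Delta_K^2\phi=0$, $\omega$ is harmonic, so it has a harmonic conjugate $s$ on the simply connected $K$ with $\nabla_K s=-\curlK\omega$ (up to the sign convention of $J_K$). Then $-\Delta_K\bv-\nabla_K s=0$ with $s\in L^2(K)$. Some care is needed because $\omega$ is only in $L^2(K)$, so I would phrase this step distributionally: $\curlK\omega$ is a curl of an $L^2$ function, hence orthogonal to curls and therefore a gradient of an $L^2(K)$ function by the $L^2$ Helmholtz decomposition on a Lipschitz simply connected domain. The constants lie in $\Phi(K)$ and are annihilated by $\curlK$. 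Conversely, $\curlK\phi=0$ forces $\phi$ to be constant on the connected $K$, which gives exactness at $\Phi(K)$. Injectivity of $\R\to\Phi(K)$ is trivial.

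\textbf{Surjectivity of $\divK$.} In planar coordinates centred anywhere, $\bx/2\in[\Pk_1(K)]^2\subset\Sigma(K)$ by Lemma~\ref{lem:local-space-unisolvence}(i), and $\divK(\bx/2)=1$.

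\textbf{Exactness at $\Sigma(K)$.} This is the main step. Let $\bv\in\Sigma(K)$ with $\divK\bv=0$. Since $K$ is simply connected and $\bv\in[H^1(K)]^2$ is divergence-free, there is $\phi\in H^2(K)$, unique up to constants, with $\curlK\phi=\bv$. It remains to check $\phi\in\Phi(K)$.
\begin{itemize}
\item $\nabla_K\phi=-J_K\bv$ on $\partial K$ lies in $[B_2(\partial K)]^2$.
\item Integrating the continuous, edgewise quadratic tangential derivative along $\partial K$ shows that $\phi|_{\partial K}$ is continuous and edgewise cubic, so $\phi|_{\partial K}\in B_3(\partial K)$.
\item Applying $\operatorname{rot}$ to $-\Delta_K\bv=\nabla_K s$ eliminates $s$ and gives $\operatorname{rot}\Delta_K\curlK\phi=\pm\Delta_K^2\phi=0$ in the distributional sense.
\end{itemize}
Hence $\phi\in\Phi(K)$ and $\ker(\divK|_{\Sigma(K)})=\curlK\Phi(K)$.

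The obstacle I expect is the regularity bookkeeping in the distributional arguments. A dimension count gives a cross-check: with $n$ vertices and $n$ edges, the counts are $1-\dim\Phi(K)+\dim\Sigma(K)-1=-(3n+n)+(2n+2n)=0$, which confirms exactness at the last position once the other positions are established.
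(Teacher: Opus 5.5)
Your proposal is correct and takes the same route as the paper. The paper argues only that, on the simply connected $K$, the divergence-free fields of $\Sigma(K)$ are exactly the curls of the biharmonic functions in $\Phi(K)$, and that $\bx/2$ has divergence one; your proposal fills in the regularity details the paper omits.

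One justification needs fixing, in the well-definedness step. $\curlK\omega$ is the gradient of an $L^2(K)$ function not because it is a curl. It is because $\omega$ is harmonic, which makes $\curlK\omega$ annihilate $\curlK H_0^2(K)$, the divergence-free fields in $[H_0^1(K)]^2$; de Rham's theorem then applies.
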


\begin{figure}[htbp]
 \centering
 \begin{subfigure}[t]{.235\linewidth}
  \centering\includegraphics[width=\linewidth]{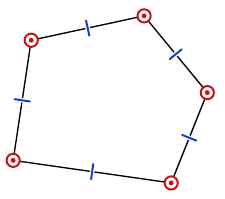}
  \caption{Virtual scalar}
 \end{subfigure}\hfill
 \begin{subfigure}[t]{.235\linewidth}
  \centering\includegraphics[width=\linewidth]{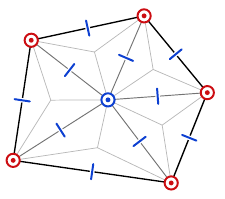}
  \caption{HCT scalar}
 \end{subfigure}\hfill
 \begin{subfigure}[t]{.235\linewidth}
  \centering\includegraphics[width=\linewidth]{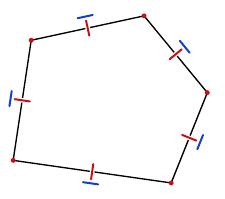}
  \caption{Virtual velocity}
 \end{subfigure}\hfill
 \begin{subfigure}[t]{.235\linewidth}
  \centering\includegraphics[width=\linewidth]{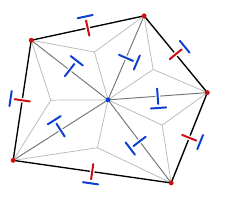}
  \caption{HCT velocity}
 \end{subfigure}
 \caption{Degrees of freedom of the local virtual element and macroelement constructed in Section~\ref{sec:explicit-representative}. Dots denote function values, circles denote scalar gradients, and strokes denote edge averages. Red degrees of freedom remain independent after assembly, whereas blue ones are constrained or eliminated locally.}
 \label{fig:paired-local-dofs}
\end{figure}

\section{A nonconforming Stokes complex on polygonal surfaces}
\label{sec:global-assembly}\label{sec:global-complex}

On the smooth surface $\gamma$, the Stokes complex is
\begin{equation}\label{eq:intro-surface-complex}
 0\longrightarrow\mathbb R\longrightarrow H^2(\gamma)
 \xrightarrow{\curlg}\bm H_t^1(\gamma)
 \xrightarrow{\divg}\mathring L^2(\gamma)\longrightarrow0.
\end{equation}
It is exact on a simply connected surface: the divergence is onto the
zero-mean pressure space, and every divergence-free $H^1$ velocity is the
curl of an $H^2$ scalar function, unique up to a constant.
On a surface of positive genus, harmonic tangential fields represent the
cohomology at the velocity space
\cite{holst2012geometric,reusken2020stream}.
In this section, we
construct a polygonal-surface counterpart with the expected cohomology
dimension and commuting interpolants.

\subsection{Global spaces and the discrete complex}
\renewcommand{\Phih}{\Phi_h}
\renewcommand{\Sigmah}{\Sigma_h}
\renewcommand{\Qh}{\mathring Q_h}
\paragraph{Global spaces.}
We assemble the local complex of Lemma~\ref{lem:local-stokes-complex}
using the vertex transformations, normal-flux continuity, and edge integral
constraints determined by the reference-face vertex data.

The pressure space is $\Qh:=\mathring{\Pk}_0(\Th)$.

For a piecewise vector field $\bv_h$, write $\bv_K:=\bv_h|_K$ and set
\begin{equation}\label{eq:shared-edge-tangential-mean}
 T_e(\bv_h):=\frac12\sum_{a\in\partial e}
       \bv_{K_a}(a)\cdot\bt.
\end{equation}
This value depends on the two reference-face vertex values and the edge
tangent, and is the same for both faces incident to $e$.

For a piecewise vector field $\bv_h$ with $\bv_K\in\Sigma(K)$, impose
\begin{subequations}\label{eq:global-velocity-assembly}
\begin{alignat}{2}
 \bv_K(a)&=M_{a,K}\bv_{K_a}(a)
 &&\quad\forall K\ni a,
 \label{eq:global-velocity-vertex-assembly}\\*
 \int_e\jump{\bv_h\cdot\bn}\,\mathrm{d}s&=0
 &&\quad\forall e\in\Eh,
 \label{eq:global-velocity-normal-assembly}\\*
 \frac{1}{|e|}\int_e\bv_K\cdot\bt\,\mathrm{d}s&=T_e(\bv_h)
 &&\quad\forall K\in\Th,\ e\subset\partial K.
 \label{eq:global-velocity-tangent-assembly}
\end{alignat}
\end{subequations}
These conditions define
\begin{equation}\label{eq:global-velocity-space}
 \Sigmah:=\{\bv_h:\bv_h|_K\in\Sigma(K)\ \forall K\in\Th,
              \ \eqref{eq:global-velocity-assembly}
              \text{ holds}\}.
\end{equation}
Its independent degrees of freedom are \ref{dof:velocity-vertex} on the
reference faces $K_a$ and the normal-flux moments in
\ref{dof:velocity-edge}.  The tangential moments in
\ref{dof:velocity-edge} are determined by the vertex data through
\eqref{eq:global-velocity-tangent-assembly}.

For a piecewise function $\phi_h$ with
$\phi_K:=\phi_h|_K\in\Phi(K)$, impose
\begin{subequations}\label{eq:global-stream-assembly}
\begin{alignat}{2}
 \phi_K(a)&=\phi_{K_a}(a)
 &&\quad\forall K\ni a,
 \label{eq:global-stream-value-assembly}\\
 \nabla_K\phi_K(a)&=\bP_K\nabla_{K_a}\phi_{K_a}(a)
 &&\quad\forall K\ni a,
 \label{eq:global-stream-vertex-assembly}\\
\frac{1}{|e|} \int_e\partial_{\bn_K}\phi_K\,\mathrm{d}s
 &=\sigma_{K,e}T_e(\curlh\phi_h)
 &&\quad\forall K\in\Th,\ e\subset\partial K.
 \label{eq:global-stream-normal-assembly}
\end{alignat}
\end{subequations}
These conditions define the auxiliary scalar space
\begin{equation}\label{eq:global-stream-space}
 \Phih:=\{\phi_h:\phi_h|_K\in\Phi(K)\ \forall K\in\Th,
             \ \eqref{eq:global-stream-assembly}
             \text{ holds}\}.
\end{equation}
Its independent degrees of freedom are \ref{dof:stream-vertex}, taken on
the reference faces $K_a$. 
On each edge $e$, the quadratic trace
$\partial_{\bn_K}\phi_K|_e\in\Pk_2(e)$ is determined by its endpoint
values and the integral prescribed by
\eqref{eq:global-stream-normal-assembly}.  On a planar mesh, this constraint forces $\partial_{\bn_K}\phi_K|_e\in\Pk_1(e)$.

\paragraph{Conformity.}
The global tangential $\bm H^1$ and scalar $H^2$ conformity of the smooth-surface
Stokes complex \eqref{eq:intro-surface-complex} cannot be carried over
directly to $\Gamma_h$.
However, the vertex transformations and normal-flux conditions preserve
conformity in the
$H^1(\Gamma_h)$--$\bm H(\divG;\Gamma_h)$--$L^2(\Gamma_h)$ de Rham domains.
In addition, the tangential components of the velocities and the
conormal derivatives of the scalar functions satisfy higher-order jump estimates,
providing approximate continuity in the remaining trace components.

\begin{lemma}[Conformity of the global spaces]
\label{lem:global-domain-conformity}
\begin{equation}\label{eq:global-domain-conformity}
 \Sigmah\subset\bm H(\divG;\Gamma_h),
 \qquad
 \Phih\subset H^1(\Gamma_h).
\end{equation}
Moreover, for every $\bv_h\in\Sigmah$, $\phi_h\in\Phih$, and $e\in\Eh$,
\[
 \int_e\jump{\bv_h\cdot\bt}\,\mathrm{d}s=0,
 \qquad
 \int_e\jump{\partial_{\bn}\phi_h}\,\mathrm{d}s=0.
\]
\end{lemma}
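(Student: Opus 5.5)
The argument runs edge by edge. On a common edge \(e=\partial K\cap\partial L\) we identify the traces explicitly: a local field in \(\Sigma(K)\) has a trace in \([B_2(\partial K)]^2\), so each component is quadratic on \(e\), and the same holds for \(\phi_K\) in \(\Phi(K)\). For piecewise \(H^1\) (resp. \(H^2\)) fields, conformity reduces to trace matching. \(\Sigmah\subset\bm H(\divG;\Gamma_h)\) is equivalent to \(\jump{\bv_h\cdot\bn}=0\) on every edge, as recorded at the end of Subsection~\ref{subsec:admissible-meshes}. Likewise \(\Phih\subset H^1(\Gamma_h)\) is equivalent to continuity of \(\phi_h\) across every edge.

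\textbf{Velocity.} The normal jump \(\jump{\bv_h\cdot\bn}|_e=\bv_K\cdot\bn_K+\bv_L\cdot\bn_L\) is a quadratic polynomial on \(e\), so it is determined by its two endpoint values and its mean.
\begin{itemize}[leftmargin=2em]
\item At an endpoint \(a\), \eqref{eq:global-velocity-vertex-assembly} gives \(\bv_K(a)=M_{a,K}\bv_{K_a}(a)\) and \(\bv_L(a)=M_{a,L}\bv_{K_a}(a)\), with \(\bv_{K_a}(a)\in T_aK_a\). Then \eqref{eq:vertex-transform-flux} of Lemma~\ref{lem:vertex-transform-edge} makes the endpoint value vanish.
\item The mean vanishes by \eqref{eq:global-velocity-normal-assembly}.
\end{itemize}
Hence \(\jump{\bv_h\cdot\bn}\equiv0\) on \(e\). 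For the tangential statement, \eqref{eq:global-velocity-tangent-assembly} prescribes the same value \(T_e(\bv_h)\) for both faces, because \(T_e\) uses only reference-face data and the common \(\bt\). Subtracting the two relations gives \(\int_e\jump{\bv_h\cdot\bt}\,\mathrm ds=0\).

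\textbf{Stream function.} On \(e\), \(\phi_K|_e\in\Pk_3(e)\) is determined by its endpoint values and its tangential derivatives \(\partial_{\bt}\phi_K(a)=\bt\cdot\nabla_K\phi_K(a)\). The values agree by \eqref{eq:global-stream-value-assembly}. For the derivatives, \eqref{eq:global-stream-vertex-assembly} gives
\[
\bt\cdot\nabla_K\phi_K(a)=\bt\cdot\bP_K\nabla_{K_a}\phi_{K_a}(a)=\bt\cdot\nabla_{K_a}\phi_{K_a}(a),
\]
using \(\bP_K\bt=\bt\) since \(\bt\) lies in the plane of \(K\). The same holds for \(L\), so the Hermite data coincide and \(\phi_K|_e=\phi_L|_e\). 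This gives \(\Phih\subset H^1(\Gamma_h)\).

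For the conormal jump, \eqref{eq:global-stream-normal-assembly} gives
\[
\frac1{|e|}\int_e\partial_{\bn_K}\phi_K\,\mathrm ds=\sigma_{K,e}T_e(\curlh\phi_h),\qquad \frac1{|e|}\int_e\partial_{\bn_L}\phi_L\,\mathrm ds=\sigma_{L,e}T_e(\curlh\phi_h).
\]
Here \(T_e(\curlh\phi_h)\) is a single quantity, since it depends only on reference-face values at the endpoints. Summing and using \(\sigma_{L,e}=-\sigma_{K,e}\) yields \(\int_e\jump{\partial_{\bn}\phi_h}\,\mathrm ds=0\).

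The step needing the most care is the endpoint argument for the normal flux. One must check that the vertex data lie in \(T_aK_a\), so that Lemma~\ref{lem:vertex-transform-edge} applies. One must also confirm that the vertex values of the two faces are exactly the \(M\)-transforms of the same reference vector, including when \(K_a\) is neither \(K\) nor \(L\). Beyond this, the argument is trace-polynomial bookkeeping.
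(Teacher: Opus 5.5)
Your proposal is correct and follows the paper's proof essentially step for step. Vertex assembly with \eqref{eq:vertex-transform-flux} and the flux-moment condition make the quadratic normal jump vanish. The Hermite data of the cubic scalar traces agree because $\bP_K\bt=\bt$. The zero-mean jumps follow from the shared value $T_e$ and $\sigma_{L,e}=-\sigma_{K,e}$. You also spell out that endpoint values and the edge mean determine an element of $\Pk_2(e)$, which the paper leaves implicit.
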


\begin{proof}
For the velocity space, \eqref{eq:global-velocity-vertex-assembly},
\eqref{eq:global-velocity-normal-assembly}, and
Lemma~\ref{lem:vertex-transform-edge} make the two outward conormal traces
cancel across each edge.  Hence
$\Sigmah\subset\bm H(\divG;\Gamma_h)$.
The vertex conditions \eqref{eq:global-stream-value-assembly}--\eqref{eq:global-stream-vertex-assembly} identify
the endpoint values and tangential derivatives of the cubic scalar traces,
since $\bP_K\bt=\bP_L\bt=\bt$ for $e=\partial K\cap\partial L$.  Thus
$\Phih\subset H^1(\Gamma_h)$.
The zero-mean jump conditions follow directly from
\eqref{eq:global-velocity-tangent-assembly} and
\eqref{eq:global-stream-normal-assembly}, since
$\sigma_{L,e}=-\sigma_{K,e}$.
\end{proof}

\begin{lemma}[Higher-order trace estimates]
\label{lem:higher-order-trace-estimates}
Let $e=\partial K\cap\partial L$.  For every $\bv_h\in\Sigmah$ and
$\phi_h\in\Phih$, it holds that
\begin{subequations}\label{eq:higher-order-trace-estimates}
\begin{align}
 \|\jump{\bv_h \cdot\bt}\|_{0,e}
 +h_e\|\partial_{\bt}(\jump{\bv_h \cdot\bt})\|_{0,e}
 &\lesssim h_e^{3/2}
 \left(\|\bv_h\|_{0,\omega_e}
       +h_e|\bv_h|_{1,h,\omega_e}\right),
 \label{eq:velocity-tangential-jump-lifting}\\
 \|\jump{\partial_{\bn}\phi_h}\|_{0,e}
 +h_e\|\partial_{\bt}\jump{\partial_{\bn}\phi_h}\|_{0,e}
 &\lesssim h_e^{3/2}
 \left(|\phi_h|_{1,h,\omega_e}
       +h_e|\phi_h|_{2,h,\omega_e}\right).
 \label{eq:normal-jump-lifting}
\end{align}
\end{subequations}
The hidden constants depend only on the constants in Assumption~\ref{ass:mesh}.
\end{lemma}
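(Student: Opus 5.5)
The jump $\jump{\bv_h\cdot\bt}$ on $e$ is a quadratic polynomial in arclength. Its endpoint values are controlled by Lemma~\ref{lem:vertex-transform-edge}, and its mean vanishes by Lemma~\ref{lem:global-domain-conformity}. I would therefore bound it through a three-parameter description and then relate the vertex values to broken norms by inverse estimates. The scalar estimate \eqref{eq:normal-jump-lifting} then follows from the velocity case, since $\curlh$ maps $\Phih$ into $\Sigmah$.

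\textbf{Step 1: endpoint bound.} Write $q:=\jump{\bv_h\cdot\bt}|_e\in\Pk_2(e)$. At an endpoint $a$, the assembly condition \eqref{eq:global-velocity-vertex-assembly} gives
\[
q(a)=(M_{a,K}\bv_{K_a}(a)-M_{a,L}\bv_{K_a}(a))\cdot\bt .
\]
Then \eqref{eq:vertex-transform-tangent} yields $|q(a)|\lesssim h_e^2|\bv_{K_a}(a)|$. Since $M_{a,K}$ is an isomorphism with uniformly bounded inverse, $|\bv_{K_a}(a)|\lesssim|\bv_K(a)|$.

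\textbf{Step 2: quadratic with zero mean.} A quadratic on $e$ with zero mean is determined by its two endpoint values. Scaling to a reference interval gives
\[
\|q\|_{0,e}+h_e\|\partial_{\bt}q\|_{0,e}\lesssim h_e^{1/2}\bigl(|q(a_1)|+|q(a_2)|\bigr).
\]
This follows from the norm equivalence on the two-dimensional space $\{q\in\Pk_2:\int q=0\}$, where the endpoint values form a norm because a zero-mean quadratic vanishing at both ends is zero. Combining with Step 1,
\[
\|q\|_{0,e}+h_e\|\partial_{\bt}q\|_{0,e}\lesssim h_e^{5/2}\max_{a\in\partial e}|\bv_K(a)| .
\]

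\textbf{Step 3: vertex value bound (main obstacle).} The crux is to show
\[
|\bv_K(a)|\lesssim h_K^{-1}\bigl(\|\bv_K\|_{0,K}+h_K|\bv_K|_{1,K}\bigr)
\]
for virtual functions, which are not polynomial. I would first apply a trace inequality on the edge $e'\subset\partial K$ containing $a$:
\[
\|\bv_K\|_{0,e'}\lesssim h^{-1/2}\|\bv_K\|_{0,K}+h^{1/2}|\bv_K|_{1,K}.
\]
The star-shapedness in \ref{mesh:star-shaped} gives a uniform constant here. Since $\bv_K|_{e'}\in\Pk_2(e')$, the one-dimensional inverse estimate $\|\cdot\|_{L^\infty(e')}\lesssim h^{-1/2}\|\cdot\|_{0,e'}$ then gives the claim. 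This route avoids any unverified VEM inverse inequality, because it only uses the polynomial boundary trace. Inserting the bound into Step 2 gives $h_e^{3/2}(\|\bv_h\|_{0,\omega_e}+h_e|\bv_h|_{1,h,\omega_e})$, which is \eqref{eq:velocity-tangential-jump-lifting}.

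\textbf{Scalar case.} I would show $\curlh\Phih\subset\Sigmah$ by checking the assembly conditions. The vertex condition \eqref{eq:global-velocity-vertex-assembly} follows from \eqref{eq:vertex-transform-commuting} and \eqref{eq:global-stream-vertex-assembly}. The flux condition \eqref{eq:global-velocity-normal-assembly} follows from \eqref{eq:edge-curl-signs}, since $\partial_{\bt}\phi$ is continuous by Lemma~\ref{lem:global-domain-conformity}. The tangent condition \eqref{eq:global-velocity-tangent-assembly} follows from \eqref{eq:global-stream-normal-assembly} and \eqref{eq:edge-curl-signs}. The local spaces are compatible by Lemma~\ref{lem:local-stokes-complex}.

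Then $\jump{\partial_{\bn}\phi_h}=\sigma_{K,e}\jump{\curlh\phi_h\cdot\bt}$. Applying \eqref{eq:velocity-tangential-jump-lifting} with $\|\curlh\phi_h\|_{0}=|\phi_h|_{1,h}$ and $|\curlh\phi_h|_{1,h}=|\phi_h|_{2,h}$ gives \eqref{eq:normal-jump-lifting}.

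If that membership is deferred in the paper, I would instead repeat Steps 1–3 directly for the quadratic $\partial_{\bn}\phi$. In that case Step 1 uses \eqref{eq:vertex-transform-tangent} via \eqref{eq:vertex-transform-commuting} with $\bm g=\nabla_{K_a}\phi_{K_a}(a)$.
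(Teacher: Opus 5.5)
Your proof is correct and matches the paper's argument: the zero-mean quadratic jump is bounded by its endpoint values, these are $O(h_e^2)$ by the vertex-transform estimate together with the bounded inverse of $M_{a,K}$, and the vertex value is controlled by the endpoint inverse estimate and the scaled trace inequality. For the scalar case, your reduction through $\curlh\Phi_h\subset\Sigma_h$ (which you verify directly, and which the paper establishes later in Theorem~\ref{thm:global-exactness-cohomology}) uses the same identities \eqref{eq:vertex-transform-commuting} and \eqref{eq:edge-curl-signs} that the paper cites for its analogous scalar argument.
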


\begin{proof}
By the local trace definition and Lemma~\ref{lem:global-domain-conformity},
$\jump{\bv_h\cdot\bt}$ belongs to $\Pk_2(e)$ with zero mean.
Its endpoint values determine it, so
\[
 \|\jump{\bv_h\cdot\bt}\|_{0,e}
 +h_e\|\partial_{\bt}\jump{\bv_h\cdot\bt}\|_{0,e}
 \lesssim h_e^{1/2}\sum_{a\in\partial e}
       |\jump{\bv_h\cdot\bt}(a)|.
\]
For each endpoint $a$, \eqref{eq:global-velocity-vertex-assembly} and
Lemma~\ref{lem:vertex-transform-edge} give
\[
\begin{aligned}
 |\jump{\bv_h\cdot\bt}(a)|
 &\lesssim h_e^2|\bv_{K_a}(a)|
 \lesssim h_e^2|\bv_K(a)|\\
 &\lesssim h_e\left(\|\bv_K\|_{0,K}+h_e|\bv_K|_{1,K}\right).
\end{aligned}
\]
Here the second inequality follows from
\eqref{eq:global-velocity-vertex-assembly} and the uniformly bounded inverse
of $M_{a,K}:T_aK_a\to T_aK$; the last follows from the endpoint inverse estimate,
the scaled trace inequality, and $h_K\simeq h_e$.
Combining the estimates proves \eqref{eq:velocity-tangential-jump-lifting}.
The scalar estimate \eqref{eq:normal-jump-lifting} follows analogously using
\eqref{eq:vertex-transform-commuting} and \eqref{eq:edge-curl-signs}.
\end{proof}

\begin{remark}
The lowest-order local velocity space of
\cite[Section~3.1]{antonietti2014stream} is
\begin{equation}\label{eq:planar-lowest-order-velocity-space}
 \widetilde\Sigma(K):=\{\bv\in\Sigma(K):
       \bv\cdot\bt|_e\in\Pk_1(e)
       \ \forall e\subset\partial K\}.
\end{equation}
Its independent degrees of freedom are \ref{dof:velocity-vertex} and the
normal-flux moments in \ref{dof:velocity-edge}; vertex and normal-flux
assembly gives
\begin{equation}\label{eq:uncorrected-global-velocity-space}
 \widetilde\Sigma_h:=\{\bv_h:\bv_h|_K\in\widetilde\Sigma(K)
 \ \forall K\in\Th,\quad
 \eqref{eq:global-velocity-vertex-assembly}\text{--}
 \eqref{eq:global-velocity-normal-assembly}\text{ hold}\}.
\end{equation}
The two global spaces have the same independent velocity degrees of freedom
and lie in $\bm H(\divG;\Gamma_h)$.  The linear tangential jumps in
$\widetilde\Sigma_h$ satisfy \eqref{eq:velocity-tangential-jump-lifting}
by the same endpoint estimate, but need not have the zero-mean property of
Lemma~\ref{lem:global-domain-conformity}.  The spaces coincide on planar
meshes; see Remark~\ref{rem:planar-reduction}.
\end{remark}

\begin{remark}[Planar reduction]\label{rem:planar-reduction}
Consider a polygonal mesh of a planar domain $\Omega$, with a common unit
normal and boundary conditions imposed separately. Then $M_{a,K}$ and
$\bP_K$ reduce to the identity on the common tangent plane. For an edge
$e=[a,b]$, the quadratic tangential trace
$q=(\bv_K\cdot\bt)|e$ satisfies
$
\frac1{|e|}\int_e q \,\mathrm ds=\frac12 (q(a)+q(b)),
$
and hence $q\in\Pk_1(e)$. Therefore the local traces, degrees of freedom, and assembly rules reduce exactly to those of the planar construction in
\cite[Sections~3.1 and~4]{antonietti2014stream}; in particular,
$\Sigma_h=\widetilde\Sigma_h$. The standard planar conformity is thus
recovered, with $\Phi_h\subset H^2(\Omega)$ and
$\Sigma_h\subset[H^1(\Omega)]^2$.
\end{remark}

\paragraph{The complex.}
To describe exactness at the velocity space, define the discrete divergence
kernel
\begin{equation}\label{eq:global-discrete-kernel}
 \mathcal Z_h:=\{\bv_h\in\Sigmah:\divh\bv_h=0\}.
\end{equation}

\begin{theorem}[Exactness and cohomology]
\label{thm:global-exactness-cohomology}
Let $\Gamma_h$ be connected, closed, and orientable.  Then
\begin{equation}\label{eq:global-discrete-complex}
 0\longrightarrow\mathbb R\xrightarrow{\,\subset\,}
 \Phih\xrightarrow{\,\curlh\,}
 \Sigmah\xrightarrow{\,\divh\,}
 \Qh\longrightarrow0
\end{equation}
is a complex and is exact except possibly at $\Sigmah$.  The cohomology at
$\Sigmah$ is
\begin{equation}\label{eq:algebraic-cohomology-quotient}
 \mathcal C_h^1:=\mathcal Z_h\big/\curlh\Phih,
 \qquad
 \dim\mathcal C_h^1=2-\chi(\Gamma_h),
 \qquad
 \chi(\Gamma_h):=N_V-N_E+N_F,
\end{equation}
where $N_V:=\#\Vh$, $N_E:=\#\Eh$, and $N_F:=\#\Th$.
In particular, if $\Gamma_h$ is simply connected, then
$\chi(\Gamma_h)=2$ and \eqref{eq:global-discrete-complex} is exact.
\end{theorem}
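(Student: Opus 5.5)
The proof has three parts: check that \eqref{eq:global-discrete-complex} is a complex, prove exactness at \(\Phih\) and at \(\Qh\), and count dimensions to compute \(\dim\mathcal C_h^1\).

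\textbf{Complex property.} First I show that \(\curlh\) maps \(\Phih\) into \(\Sigmah\). Facewise, \(\curlK\phi_K\in\Sigma(K)\) by Lemma~\ref{lem:local-stokes-complex}. For the vertex condition \eqref{eq:global-velocity-vertex-assembly}, I combine \eqref{eq:global-stream-vertex-assembly} with \eqref{eq:vertex-transform-commuting}:
\[
\curlK\phi_K(a)=J_K\bP_K\nabla_{K_a}\phi_{K_a}(a)=M_{a,K}\curl_{K_a}\phi_{K_a}(a).
\]
Normal-flux continuity \eqref{eq:global-velocity-normal-assembly} holds because \(\curlK\phi\cdot\bn_K=-\sigma_{K,e}\partial_{\bt}\phi\) by \eqref{eq:edge-curl-signs}. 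Since \(\Phih\subset H^1\) (Lemma~\ref{lem:global-domain-conformity}), the two tangential derivatives agree, and \(\sigma_{L,e}=-\sigma_{K,e}\) makes the fluxes cancel pointwise. The tangential-mean condition \eqref{eq:global-velocity-tangent-assembly} is exactly \eqref{eq:global-stream-normal-assembly} multiplied by \(\sigma_{K,e}\), using \(\curlK\phi\cdot\bt=\sigma_{K,e}\partial_{\bn_K}\phi\).

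Next, \(\divh\curlh=0\) holds facewise. For \(\bv_h\in\Sigmah\), \(\divh\bv_h\) is piecewise constant, and its total integral is the sum of edge fluxes, which cancel by \eqref{eq:global-velocity-normal-assembly}. Hence \(\divh\bv_h\in\Qh\). Finally, constants lie in \(\Phih\) and have zero curl.

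\textbf{Exactness at \(\Phih\).} If \(\curlh\phi_h=0\), then \(\phi_K\) is constant on each face. By \(H^1\)-continuity and connectedness of \(\Gamma_h\), it is a single global constant.

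\textbf{Surjectivity of \(\divh\) onto \(\Qh\).} This is the main analytic step. I reduce it to a discrete graph argument. Choose a velocity that vanishes at all vertices and has prescribed normal fluxes \(\int_e\bv\cdot\bn_K\). Then \(T_e=0\), so all tangential means vanish. By Lemma~\ref{lem:local-space-unisolvence}, such a field exists in \(\Sigmah\) for any antisymmetric assignment of edge fluxes. The face divergences are then \(|K|^{-1}\) times the net outflux. On the connected dual graph, the edge-to-face incidence map hits every zero-sum vector of face values, so every element of \(\Qh\) is attained.

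\textbf{Dimension count.} The independent degrees of freedom give
\[
\dim\Phih=3N_V,\qquad \dim\Sigmah=2N_V+N_E.
\]
Here each reference face contributes a tangent-plane vertex value of dimension 2 per vertex for \(\Sigmah\), and a value plus a 2D gradient per vertex for \(\Phih\). The justification is that the assembly conditions are a well-posed triangular system: vertex data on \(K_a\) determine the vertex data on every \(K\ni a\), since \(M_{a,K}\) and \(\bP_K\) restricted to \(T_aK_a\) are isomorphisms. The tangential means and stream normal means are then fixed by the constraints, and local unisolvence finishes the construction. This gives
\[
\dim\mathcal Z_h=2N_V+N_E-(N_F-1),\qquad \dim\curlh\Phih=3N_V-1,
\]
so
\[
\dim\mathcal C_h^1=N_E-N_V-N_F+2=2-\chi(\Gamma_h).
\]
For a closed, connected, simply connected orientable surface, \(\chi=2\), so the complex is exact. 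I expect the main care to be needed in confirming that the constraints \eqref{eq:global-stream-assembly} are consistent. In particular, \(T_e(\curlh\phi_h)\) depends only on reference-face vertex data, so no circularity arises and \(\dim\Phih=3N_V\) exactly.
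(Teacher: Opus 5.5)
Your proposal is correct and follows essentially the same route as the paper. You verify $\curlh\Phi_h\subset\mathcal Z_h$ through the vertex-transform commuting identity, pointwise cancellation of the normal fluxes, and the sign relation that turns the stream edge constraint into the tangential-mean constraint. You prove surjectivity of $\divh$ using zero vertex data and edge fluxes that solve the incidence system on the connected dual graph, and you obtain $\dim\mathcal C_h^1=(2N_V+N_E)-(N_F-1)-(3N_V-1)=2-\chi(\Gamma_h)$ from the same degree-of-freedom counts. Your remark that $T_e(\curlh\phi_h)$ depends only on reference-face vertex data, so that $\dim\Phi_h=3N_V$ involves no circularity, spells out a point the paper leaves implicit.
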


\begin{proof}
We first consider the divergence.  $\divK\bv_K\in\Pk_0(K)$ for $\bv_h\in\Sigmah$, and conformity gives
$\int_{\Gamma_h}\divh\bv_h=0$.  Thus $\divh\Sigmah\subset\Qh$.
To prove surjectivity of $\divh$, let $q_h\in\Qh$, set
$m_K=(q_h,1)_K$ and $\bm m=(m_K)_{K\in\Th}$, and let
$B\in\mathbb R^{N_F\times N_E}$ be an incidence matrix of the connected dual
graph.  Since $\bm m\perp\bm1$, there is
$\bm F=(F_e)$ such that $B\bm F=\bm m$.  For each edge, choose
$\eta_e\in\Pk_2(e)$ that vanishes at its endpoints and satisfies
$\int_e\eta_e\,\mathrm{d}s=1$.  On each face $K$, prescribe zero vertex data and
\[
 \bv_K\cdot\bn_K=B_{Ke}F_e\eta_e
 \quad\text{and}\quad \bv_K\cdot\bt=0
 \quad\text{on }e\subset\partial K.
\]
The zero vertex data give $T_e=0$, so the prescribed tangential traces
satisfy \eqref{eq:global-velocity-tangent-assembly}.
Local unisolvence gives $\bv_K\in\Sigma(K)$, while the divergence theorem
determines its constant divergence:
\[
 \int_K\divK\bv_K
 =\sum_{e\subset\partial K}B_{Ke}F_e
 =(B\bm F)_K=m_K=\int_Kq_h.
\]
Hence $\divK\bv_K=q_h|_K$.  The incidence signs assemble the normal data,
while the prescribed vertex and tangential data agree across each edge.
Therefore the local fields form $\bv_h\in\Sigmah$ with
$\divh\bv_h=q_h$.

We next consider the curl.  Local exactness gives
$(\curlh\phi_h)|_K\in\Sigma(K)$.  The vertex data assemble by
\eqref{eq:vertex-transform-commuting}, and the scalar traces give
normal-flux continuity.  The scalar constraint
\eqref{eq:global-stream-normal-assembly} and the edge identities
\eqref{eq:edge-curl-signs} give
\[
\frac{1}{|e|} \int_e(\curlK\phi_K)\cdot\bt\,\mathrm{d}s
 =T_e(\curlh\phi_h),
\]
which is \eqref{eq:global-velocity-tangent-assembly}.
Hence $\curlh\Phih\subset\Sigmah$, and the facewise
identity $\divK\curlK=0$ gives $\curlh\Phih\subset\mathcal Z_h$.
If $\curlh\phi_h=0$, then $\phi_h$ is constant on every face; scalar trace
conformity and connectedness yield
$\ker(\curlh|_{\Phih})=\mathbb R$.

It remains to compute the cohomology at $\Sigmah$.
Exactness at $\Qh$ and $\Phih$, together with the assembled
degree-of-freedom counts, gives
\[
\begin{aligned}
 \dim\mathcal C_h^1
 &=\dim\Sigmah-\dim\Qh-\dim\Phih+1\\
 &=(2N_V+N_E)-(N_F-1)-3N_V+1
 =-N_V+N_E-N_F+2.
\end{aligned}
\]
\end{proof}

\subsection{Commuting interpolation and approximation estimates}
We construct commuting interpolants for the velocity and scalar spaces and
establish their approximation properties.
In the local data below, $K\in\Th$, $a\in\mathcal V(K)$, and
$e\subset\partial K$.
For $q\in \mathring L^2(\gamma)$, define the interpolant $I_Qq\in\Qh$ by
\begin{equation}\label{eq:divergence-data-interpolation}
(I_Qq)|_K:=\Pi_0^K(\mu_h q^e),
\end{equation}
where $\Pi_0^K$ denotes the $L^2(K)$-orthogonal projection onto the constants. By the definition of $\mu_h$, $I_Qq$ has zero integral over $\Gamma_h$.

For a tangential field $\bv\in\bm H_t^2(\gamma)$, denote its Piola pullback to
$K$ by $\breve\bv_K$. The velocity interpolant
$I_\Sigma\bv\in\Sigmah$ is specified by
\begin{subequations}\label{eq:velocity-interpolation-data}
\begin{align}
 (I_\Sigma\bv)|_K(a)&:=M_{a,K}\breve\bv_{K_a}(a)
 \label{eq:general-velocity-interpolant-vertex}\\
 \int_e(I_\Sigma\bv)|_K\cdot\bn_K\,\mathrm{d}s
 &:=\int_e\breve\bv_K\cdot\bn_K\,\mathrm{d}s.
 \label{eq:interpolation-edge-flux-data}
\end{align}
\end{subequations}

For $w\in H^3(\gamma)$, the scalar interpolant
$I_\Phi w\in\Phih$ is specified by
\begin{equation}\label{eq:stream-interpolation-data}
\begin{aligned}
 (I_\Phi w)|_K(a)&=w^e(a)\\
 \nabla_K(I_\Phi w)|_K(a)&=\bP_K\nabla_{K_a}w^e(a).
\end{aligned}
\end{equation}

The remaining edge integrals follow from the global constraints.
Local unisolvence and the assembly rules show that these data define global
interpolants $I_\Sigma\bv$ and $I_\Phi w$.  The assembly maps use only the mesh normals.

\begin{lemma}[Commuting interpolation]
\label{lem:commuting-interpolation}
The interpolants $I_\Phi$, $I_\Sigma$, and $I_Q$ make the following
diagram commute:
\begin{equation}\label{eq:commuting-interpolation-diagram}
\begin{array}{ccccccccc}
 0\longrightarrow\mathbb R&\xrightarrow{\ \subset\ }&
 H^3(\gamma)&\xrightarrow{\quad\curlg\quad}&
 \bm H_t^2(\gamma)&\xrightarrow{\quad\divg\quad}&
 \mathring L^2(\gamma)&\longrightarrow&0\\[2pt]
&&{\scriptstyle I_\Phi}\big\downarrow&&
 {\scriptstyle I_\Sigma}\big\downarrow&&
 {\scriptstyle I_Q}\big\downarrow \\[2pt]
 0\longrightarrow\mathbb R&\xrightarrow{\ \subset\ }&
 \Phih&\xrightarrow{\quad\curlh\quad}&
 \Sigmah&\xrightarrow{\quad\divh\quad}&\Qh&\longrightarrow&0.
\end{array}
\end{equation}
In particular, $I_\Sigma\bv\in\mathcal Z_h$ whenever $\divg\bv=0$.
\end{lemma}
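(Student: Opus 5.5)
We must show two identities: $\curlh I_\Phi w=I_\Sigma\curlg w$ for $w\in H^3(\gamma)$, and $\divh I_\Sigma\bv=I_Q\divg\bv$ for $\bv\in\bm H_t^2(\gamma)$. The last claim then follows immediately, since $I_Q0=0$. In both cases the strategy is the same. Both sides lie in the same global space (Theorem~\ref{thm:global-exactness-cohomology} gives $\curlh\Phih\subset\Sigmah$). By the local unisolvence of Lemma~\ref{lem:local-space-unisolvence} and the global constraints, an element of $\Sigmah$ is fixed by its reference-face vertex values and its normal-flux moments. It therefore suffices to compare those independent degrees of freedom.

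\textbf{Divergence square.} Both $\divh I_\Sigma\bv$ and $I_Q\divg\bv$ are facewise constant, so it suffices to compare their integrals over each $K$. By the divergence theorem and \eqref{eq:interpolation-edge-flux-data},
\[
\int_K\divK(I_\Sigma\bv)_K=\sum_{e\subset\partial K}\int_e\breve\bv_K\cdot\bn_K\,\mathrm ds=\int_K\divK\breve\bv_K .
\]
By \eqref{eq:surface-piola-div}, the last integral equals $\int_K\mu_h(\divg\bv)^e$. Dividing by $|K|$ gives exactly $\Pi_0^K(\mu_h(\divg\bv)^e)=(I_Q\divg\bv)|_K$.

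\textbf{Curl square.} Set $\bv=\curlg w$ and $\bw_h:=\curlh I_\Phi w$. First compare vertex values on reference faces. By \eqref{eq:stream-interpolation-data},
\[
\bw_{K_a}(a)=J_{K_a}\nabla_{K_a}w^e(a),
\]
while \eqref{eq:surface-piola-curl} gives $\breve\bv_{K_a}(a)=\curlh w^e|_{K_a}(a)=J_{K_a}\nabla_{K_a}w^e(a)$. These agree, so both fields have the same reference data. On a non-reference face $K$, the vertex value of $\bw_h$ equals $J_K\bP_K\nabla_{K_a}w^e(a)=M_{a,K}J_{K_a}\nabla_{K_a}w^e(a)$ by \eqref{eq:vertex-transform-commuting}. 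This matches \eqref{eq:general-velocity-interpolant-vertex}.

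Next compare normal fluxes. By \eqref{eq:edge-curl-signs}, $\int_e\bw_K\cdot\bn_K=-\sigma_{K,e}\int_e\partial_{\bt}(I_\Phi w)_K=-\sigma_{K,e}[(I_\Phi w)(b)-(I_\Phi w)(a)]$ for $e=[a,b]$ oriented by $\bt$. This equals $-\sigma_{K,e}[w^e(b)-w^e(a)]$, since $I_\Phi$ interpolates vertex values. On the other side, $\int_e\breve\bv_K\cdot\bn_K=\int_e\curlK w^e\cdot\bn_K$, which is the same expression by \eqref{eq:surface-piola-curl}.

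With matching vertex and normal-flux data, the tangential edge means of both fields are determined by the same $T_e$, through \eqref{eq:global-velocity-tangent-assembly} and \eqref{eq:global-stream-normal-assembly}. Hence all local degrees of freedom coincide, and unisolvence gives $\bw_h=I_\Sigma\bv$.

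\textbf{Main obstacle.} The step needing the most care is justifying that pointwise vertex values and edge integrals of the Piola pullback are well defined and behave as claimed. In particular, we need \eqref{eq:surface-piola-curl} to hold facewise up to the vertices, which requires $w\in H^3$, $\bv\in\bm H^2_t$ and continuity. We also need to check that $I_\Sigma$, defined only through reference vertex data and fluxes, is consistent with the tangential-mean constraint. I would handle this by noting that the constraint involves only vertex data, so no compatibility condition arises. The remaining rows, $\mathbb R\subset$ and surjectivity onto $0$, are trivial: $I_\Phi$ maps constants to constants, and $\curlh$ of a constant is zero.
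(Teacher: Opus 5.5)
Your proposal is correct and follows essentially the same route as the paper. You prove the divergence square via the divergence theorem, the flux data \eqref{eq:interpolation-edge-flux-data}, and \eqref{eq:surface-piola-div}. You prove the curl square by matching vertex values through \eqref{eq:vertex-transform-commuting}, normal fluxes through endpoint values and \eqref{eq:edge-curl-signs}, and tangential means through the shared vertex-determined $T_e$, and then conclude by unisolvence.
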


\begin{proof}
For the divergence square, \eqref{eq:surface-piola-div} and the divergence
theorem give
\begin{align*}
 (\divK(I_\Sigma\bv)|_K,q)_K=(\divK\breve\bv_K,q)_K
 =\bigl(\mu_h(\divg\bv)^e,q\bigr)_K
 \qquad\forall q\in\Pk_0(K).
\end{align*}
Since the discrete divergence is constant, this proves
$\divh I_\Sigma\bv=I_Q(\divg\bv)$.

For the curl square, \eqref{eq:surface-piola-curl} gives
$
 \mathcal P_{\bp^{-1}}(\curlg w)=\curlh w^e.
$
The normal fluxes agree by the endpoint values and
\eqref{eq:edge-curl-signs}, while $\divK\curlK=0$.  At each
$a\in\mathcal V(K)$,
\[
 \curlK(I_\Phi w)|_K(a)
 =J_K\bP_K\nabla_{K_a}w^e(a)
 =M_{a,K}J_{K_a}\nabla_{K_a}w^e(a)
 =\bigl(I_\Sigma(\curlg w)\bigr)|_K(a),
\]
where the middle equality is \eqref{eq:vertex-transform-commuting}.
The vertex identity also makes the reconstructed means $T_e$ agree.
Local unisolvence now proves $\curlh I_\Phi w=I_\Sigma(\curlg w)$.
\end{proof}

We first bound local velocities by their boundary traces.

\begin{lemma}[Local velocity estimates]
\label{lem:local-velocity-estimates}
Let $K$ be a face in a mesh family satisfying Assumption~\ref{ass:mesh}.
For every $\bv\in\Sigma(K)$,
\begin{equation}
 h_K^{-1}\|\bv\|_{0,K}+|\bv|_{1,K}
 \lesssim h_K^{-1/2}\|\bv\|_{0,\partial K}.
 \label{eq:local-velocity-dof-estimate}
\end{equation}
The hidden constants depend only on the constants in
Assumption~\ref{ass:mesh}.
\end{lemma}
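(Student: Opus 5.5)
Since $\bv\in\Sigma(K)$ solves a Stokes problem on $K$ whose data are its boundary trace $\bv|_{\partial K}\in[B_2(\partial K)]^2$ and the constant divergence, the plan is to bound $\bv$ by the trace through the Stokes energy minimization property, and then bound the trace data by its $L^2(\partial K)$ norm via inverse estimates on the polynomial edge traces. Everything is first done on the scaled element $\hat K:=h_K^{-1}(K-\bx_K^c)$, which has unit diameter, is star-shaped with respect to a disk of radius $\ge\rho$, and has uniformly bounded edge number and edge lengths $\simeq1$ by \ref{mesh:star-shaped}--\ref{mesh:vertex-separation}. The estimate scales correctly: $h_K^{-1}\|\bv\|_{0,K}$, $|\bv|_{1,K}$, and $h_K^{-1/2}\|\bv\|_{0,\partial K}$ all scale like $\|\hat\bv\|_{0,\hat K}$, $|\hat\bv|_{1,\hat K}$, $\|\hat\bv\|_{0,\partial\hat K}$. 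It therefore suffices to prove $\|\hat\bv\|_{1,\hat K}\lesssim\|\hat\bv\|_{0,\partial\hat K}$ on $\hat K$.

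\textbf{Step 1 (minimization).} Let $c:=\divK\hat\bv$, which is constant, and let $\bm g:=\hat\bv|_{\partial\hat K}$. Then $\hat\bv$ minimizes $|\bm w|_{1,\hat K}$ over $\bm w\in[H^1(\hat K)]^2$ with $\bm w|_{\partial\hat K}=\bm g$ and $\divK\bm w=c$. The reason is that for $\bm z\in[H^1_0]^2$ with $\divK\bm z=0$ we have $(\nabla\hat\bv,\nabla\bm z)=-(s,\divK\bm z)=0$. So it suffices to exhibit a lifting $\bm w$ with $|\bm w|_{1,\hat K}\lesssim\|\bm g\|_{0,\partial\hat K}$.

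\textbf{Step 2 (lifting).} I would build $\bm w$ as follows.
\begin{enumerate}[label=\textnormal{(\alph*)},leftmargin=2.5em]
 \item Take any $H^1$ extension $\bm w_0$ of $\bm g$ with $\|\bm w_0\|_{1,\hat K}\lesssim\|\bm g\|_{1/2,\partial\hat K}$; the constant is uniform for domains star-shaped with respect to a disk of fixed radius, since these are uniformly Lipschitz.
 \item Correct the divergence with a uniformly bounded Bogovskii right inverse $\bm z\in[H_0^1]^2$ solving $\divK\bm z=c-\divK\bm w_0$. This datum has zero mean because $\int\divK\bm w_0=\int_{\partial\hat K}\bm g\cdot\bn=|\hat K|c$, and the Bogovskii constant depends only on the chunkiness parameter.
 \item Bound the data: $|c|\lesssim\|\bm g\|_{0,\partial\hat K}$.
\end{enumerate}
This gives $|\hat\bv|_{1,\hat K}\lesssim\|\bm g\|_{1/2,\partial\hat K}$.

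\textbf{Step 3 (trace inverse estimate).} Since $\bm g$ is continuous and piecewise quadratic on a uniformly bounded number of edges of length $\simeq1$, I would use $\|\bm g\|_{1/2,\partial\hat K}\le\|\bm g\|_{1,\partial\hat K}\lesssim\|\bm g\|_{0,\partial\hat K}$, which is a finite-dimensional inverse estimate with uniform constants. The $L^2$ part then follows from a Friedrichs/Poincaré-type inequality, $\|\hat\bv\|_{0,\hat K}\lesssim|\hat\bv|_{1,\hat K}+\|\hat\bv\|_{0,\partial\hat K}$, which holds uniformly on star-shaped domains.

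The main obstacle is the uniformity of the extension and Bogovskii constants over the whole shape class. I would handle this by citing the standard results for domains star-shaped with respect to a ball, namely Galdi's Bogovskii bound and the VEM literature on harmonic and Stokes liftings, for example \cite{beirao2019stokescomplex}, rather than reproving them.
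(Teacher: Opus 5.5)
Your proof is correct, but it takes a different route from the paper's. The paper does not derive the $H^1$ bound itself. It quotes the lower stability estimate \cite[Theorem~2.1]{meng2023stokesstability} for Stokes-like virtual element spaces, applied with degree two:
\[
 |\bv|_{1,K}\lesssim\|\divK\bv\|_{0,K}+h_K^{-1/2}\|\bv\|_{0,\partial K}.
\]
It then uses only two elementary ingredients that also appear in your proposal: the divergence theorem for the constant divergence (your Step~2(c)) and the boundary-mean Poincar\'e inequality (the end of your Step~3).

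Your Steps~1--3 in effect reprove that cited estimate from first principles, through:
\begin{itemize}
 \item the energy-minimization characterization of $\Sigma(K)$ among fields with the same trace and divergence;
 \item a uniformly bounded lifting;
 \item a Bogovskii correction whose constant is controlled by \ref{mesh:star-shaped};
 \item an edgewise inverse estimate that uses \ref{mesh:vertex-separation}.
\end{itemize}
The paper's version is a two-line argument, at the cost of depending on a specialized result. Yours is self-contained and shows explicitly why the constant depends only on $\rho$.

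You could shorten your version by avoiding trace theory altogether. Since $\bm g$ is continuous and edgewise quadratic, take the piecewise quadratic Lagrange extension on the (scaled) fan $\widehat{\mathcal T}_K$, with zero values at the interior nodes. This fan is uniformly shape regular, so the extension already satisfies $\|\bm w_0\|_{1,\hat K}\lesssim\|\bm g\|_{0,\partial\hat K}$. The $H^{1/2}(\partial\hat K)$ step and the uniform extension theorem then become unnecessary, and only the Bogovskii constant has to be cited. Incidentally, in that step $\|\bm g\|_{1/2,\partial\hat K}\le\|\bm g\|_{1,\partial\hat K}$ should read $\lesssim$, with a constant depending on the shape of $\partial\hat K$.
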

\begin{proof}
The lower stability bound of \cite[Theorem~2.1]{meng2023stokesstability},
applied with polynomial degree two, gives
\[
 |\bv|_{1,K}
 \lesssim \|\divK\bv\|_{0,K}+h_K^{-1/2}\|\bv\|_{0,\partial K}.
\]
Since $\divK\bv$ is constant, the divergence theorem yields
$\|\divK\bv\|_{0,K}\lesssim h_K^{-1/2}\|\bv\|_{0,\partial K}$.
The boundary-mean Poincar\'e inequality then proves
\eqref{eq:local-velocity-dof-estimate}.
\end{proof}

\begin{lemma}[Approximation of the velocity interpolant]
\label{prop:general-velocity-interpolation}
For every $\bv\in\bm H_t^2(\gamma)$ and every $K\in\Th$,
\begin{equation}
 \|\breve\bv_K-(I_\Sigma\bv)|_K\|_{0,K}
 +h_K|\breve\bv_K-(I_\Sigma\bv)|_K|_{1,K}
 \lesssim h_K^2\|\bv\|_{2,\omega_K^\gamma}.
 \label{eq:general-velocity-interpolation-estimate}
\end{equation}
\end{lemma}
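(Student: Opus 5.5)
We insert a local quasi-interpolant and control the remainder by boundary data. On $K$, let $\bm w_K\in[\Pk_1(K)]^2$ be an affine tangential field approximating $\breve\bv_K$. A concrete choice is the averaged Taylor polynomial over the disk of \ref{mesh:star-shaped}, or $\Pi_1^K\breve\bv_K$. By the Bramble--Hilbert lemma and \eqref{eq:surface-piola-norm-equivalence},
\[
 \|\breve\bv_K-\bm w_K\|_{0,K}+h_K|\breve\bv_K-\bm w_K|_{1,K}+h_K^2|\breve\bv_K-\bm w_K|_{2,K}
 \lesssim h_K^2\|\bv\|_{2,K^\gamma}.
\]
Lemma~\ref{lem:local-space-unisolvence} gives $\bm w_K\in\Sigma(K)$. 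Set $\bm\delta_K:=(I_\Sigma\bv)|_K-\bm w_K\in\Sigma(K)$. By the triangle inequality it suffices to bound $\bm\delta_K$. Lemma~\ref{lem:local-velocity-estimates} reduces this to the trace bound $h_K^{-1/2}\|\bm\delta_K\|_{0,\partial K}\lesssim h_K\|\bv\|_{2,\omega_K^\gamma}$. Since $\bm\delta_K|_{\partial K}$ is piecewise quadratic and determined by \ref{dof:velocity-vertex}--\ref{dof:velocity-edge}, a scaling argument gives
\[
 h_K^{-1/2}\|\bm\delta_K\|_{0,\partial K}\lesssim \sum_{a\in\mathcal V(K)}|\bm\delta_K(a)|+\sum_{e\subset\partial K}\Bigl|\tfrac1{|e|}\textstyle\int_e\bm\delta_K\cdot\bn_K\Bigr|+\Bigl|\tfrac1{|e|}\textstyle\int_e\bm\delta_K\cdot\bt\Bigr|.
\]
The proof then bounds each of these degrees of freedom by $h_K\|\bv\|_{2,\omega_K^\gamma}$.

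\emph{Vertex values.} Write $\bm\delta_K(a)=\bigl(M_{a,K}\breve\bv_{K_a}(a)-\breve\bv_K(a)\bigr)+\bigl(\breve\bv_K(a)-\bm w_K(a)\bigr)$. Lemma~\ref{lem:vertex-transform-comparison} bounds the first term by $h_a^2|\bv(\bp(a))|$. The Sobolev embedding $H^2\subset L^\infty$ on the patch, scaled, gives $|\bv(\bp(a))|\lesssim h^{-1}\|\bv\|_{0,\omega_K^\gamma}+\|\bv\|_{1,\omega_K^\gamma}+h\|\bv\|_{2,\omega_K^\gamma}$. Hence $h_a^2|\bv(\bp(a))|\lesssim h_K\|\bv\|_{2,\omega_K^\gamma}$ for $h\le h_0\le1$, using the comparable face sizes in the patch. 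The second term is bounded by the scaled $L^\infty$ estimate $\|\breve\bv_K-\bm w_K\|_{L^\infty(K)}\lesssim h_K^{-1}\bigl(\|\cdot\|_{0,K}+h_K|\cdot|_{1,K}+h_K^2|\cdot|_{2,K}\bigr)\lesssim h_K\|\bv\|_{2,K^\gamma}$.

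\emph{Normal fluxes.} By \eqref{eq:interpolation-edge-flux-data}, the normal moment of $\bm\delta_K$ equals $\frac1{|e|}\int_e(\breve\bv_K-\bm w_K)\cdot\bn_K$. The scaled trace inequality bounds it by $h_K\|\bv\|_{2,K^\gamma}$.

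\emph{Tangential means.} By \eqref{eq:global-velocity-tangent-assembly}, $\frac1{|e|}\int_e(I_\Sigma\bv)|_K\cdot\bt=\frac12\sum_{a\in\partial e}M_{a,K_a}\breve\bv_{K_a}(a)\cdot\bt$, with $M_{a,K_a}=I$ on $T_aK_a$. We compare $\breve\bv_{K_a}(a)\cdot\bt$ with $\breve\bv_K(a)\cdot\bt$. Their difference is $O(h_a^2|\bv|)$, by Lemma~\ref{lem:vertex-transform-comparison} and \eqref{eq:vertex-transform-tangent}, or directly by $|\breve\bv_K-\bP_K\bv^e|\lesssim h^2|\bv^e|$ together with $\bP_K\bt=\bt$. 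This reduces the tangential mean of $\bm\delta_K$ to
\[
 \tfrac12\bigl(\bm\rho(a)+\bm\rho(b)\bigr)\cdot\bt-\tfrac1{|e|}\int_e\bm\rho\cdot\bt,\qquad \bm\rho:=\breve\bv_K-\bm w_K,
\]
plus $O(h_K\|\bv\|_{2,\omega_K^\gamma})$; here the trapezoidal rule is exact for the affine $\bm w_K$. The vertex and edge-mean bounds above control both terms of this expression.

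The main obstacle is the cross-face vertex comparison. It requires the uniform constants of Lemma~\ref{lem:vertex-transform-comparison} over the whole patch $\omega_K$, together with a pointwise bound on $|\bv(\bp(a))|$ that loses only powers of $h$ absorbed by the $h_a^2$ factor. Both follow from Assumption~\ref{ass:mesh}. Once these are in place, summing the three bounds and applying Lemma~\ref{lem:local-velocity-estimates} gives \eqref{eq:general-velocity-interpolation-estimate}.
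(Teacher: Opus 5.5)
Your proposal is correct and is essentially the paper's proof. It uses the same averaged Taylor polynomial and the same difference $\bm\delta_K\in\Sigma(K)$, bounds its vertex, normal-flux and tangential-mean data via Lemma~\ref{lem:vertex-transform-comparison}, \eqref{eq:interpolation-edge-flux-data} and exactness of the trapezoidal rule for affine fields, and concludes with Lemma~\ref{lem:local-velocity-estimates}.

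There are two harmless imprecisions. First, the expression you display is, up to $O(h_a^2|\bv(\bp(a))|)$, the tangential mean of $(I_\Sigma\bv)|_K-\breve\bv_K$, whereas the tangential mean of $\bm\delta_K$ is simply $\tfrac12(\bm\rho(a)+\bm\rho(b))\cdot\bt$. Second, $K_a$ need not contain $e$, so the comparison of $\breve\bv_{K_a}(a)\cdot\bt$ with $\breve\bv_K(a)\cdot\bt$ does not follow from \eqref{eq:vertex-transform-tangent} itself; it follows from the normal-difference computation in the proof of Lemma~\ref{lem:vertex-transform-edge}, using $\bnu_K\cdot\bt=0$ and $\bnu_{K_a}\cdot\breve\bv_{K_a}(a)=0$. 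Your vertex and edge bounds control every term in either form, so the conclusion stands.
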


\begin{proof}
Choose an averaged Taylor polynomial $\boldsymbol p_1\in[\Pk_1(K)]^2$
of $\breve\bv_K$.  Scaled approximation and the $H^2$ embedding give
\begin{equation}\label{eq:velocity-affine-approximation}
 \|\breve\bv_K-\boldsymbol p_1\|_{0,K}
 +h_K|\breve\bv_K-\boldsymbol p_1|_{1,K}
 +h_K\|\breve\bv_K-\boldsymbol p_1\|_{L^\infty(K)}
 \lesssim h_K^2|\breve\bv_K|_{2,K}.
\end{equation}

Set $\boldsymbol\delta_K:=(I_\Sigma\bv)|_K-\boldsymbol p_1\in\Sigma(K)$.
Its vertex values are controlled by \eqref{eq:velocity-affine-approximation} and
\eqref{eq:vertex-transform-error}.  By
\eqref{eq:interpolation-edge-flux-data}, its normal edge integrals equal
those of $\breve\bv_K-\boldsymbol p_1$.
For $e\subset\partial K$, the tangential assembly rule and the fact
that $\boldsymbol p_1$ is affine give
\begin{equation}\label{eq:velocity-delta-tangential-moment}
\begin{aligned}
 \left|\frac1{|e|}\int_e\boldsymbol\delta_K\cdot\bt\,\mathrm{d}s\right|
 =\frac12\left|\sum_{a\in\partial e}
       (\breve\bv_{K_a}(a)-\boldsymbol p_1(a))\cdot\bt\right|
 \lesssim h_K|\breve\bv_K|_{2,K}
       +\sum_{a\in\partial e}h_a^2|\bv(\bp(a))|,
\end{aligned}
\end{equation}
The last bound uses \eqref{eq:vertex-transform-error} and \eqref{eq:velocity-affine-approximation}.

The quadratic boundary trace of $\boldsymbol\delta_K$ is determined by
these vertex values and edge integrals.  Polynomial norm equivalence on
the edges and Lemma~\ref{lem:local-velocity-estimates} therefore imply
\begin{equation}\label{eq:velocity-delta-bound}
\begin{aligned}
 h_K^{-1}\|\boldsymbol\delta_K\|_{0,K}
 +|\boldsymbol\delta_K|_{1,K}
 &\lesssim h_K^{-1/2}\|\boldsymbol\delta_K\|_{0,\partial K}\\
 &\lesssim h_K|\breve\bv_K|_{2,K}
       +\sum_{a\in\mathcal V(K)}h_a^2|\bv(\bp(a))|\\
 &\lesssim h_K\|\bv\|_{2,\omega_K^\gamma}.
\end{aligned}
\end{equation}
In the second inequality, the vertex and normal-moment terms use
\eqref{eq:velocity-affine-approximation}, \eqref{eq:vertex-transform-error},
and the interpolation data; the tangential-moment term uses
\eqref{eq:velocity-delta-tangential-moment}.
The last inequality uses the scaled $H^2$ embedding on $K_a^\gamma$,
$K_a\subset\omega_K$, $h_a\simeq h_K$, and
\eqref{eq:surface-piola-norm-equivalence}.
Combining \eqref{eq:velocity-affine-approximation} and
\eqref{eq:velocity-delta-bound} with the triangle inequality proves
\eqref{eq:general-velocity-interpolation-estimate}.
\end{proof}

Taking $\bv=\curlg w$ in Lemma~\ref{prop:general-velocity-interpolation}
and using the commuting relation yields the following scalar estimates.

\begin{lemma}[Approximation of the scalar interpolant]
\label{prop:general-paired-interpolation}
Let $\Gamma_h$ and $\Th$ satisfy
Assumption~\ref{ass:mesh}.  If $w\in H^3(\gamma)$, then, for every
$K\in\Th$,
\begin{subequations}\label{eq:paired-interpolation-estimates}
\begin{alignat}{2}
 \|\mathcal P_{\bp^{-1}}(\curlg w)-\curlK I_\Phi w\|_{m,K}
 &\lesssim h_K^{2-m}\|w\|_{3,\omega_K^\gamma}
 &\qquad m&=0,1,
 \label{eq:velocity-interpolation-target}\\
 \|w^e-I_\Phi w\|_{m,K}
 &\lesssim h_K^{3-m}\|w\|_{3,\omega_K^\gamma}
 &\qquad m&=0,1,2.
 \label{eq:scalar-interpolation-target}
\end{alignat}
\end{subequations}
\end{lemma}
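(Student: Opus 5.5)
The velocity estimate \eqref{eq:velocity-interpolation-target} follows from the commuting diagram and Lemma~\ref{prop:general-velocity-interpolation}. Set $\bv:=\curlg w\in\bm H_t^2(\gamma)$. Then \eqref{eq:surface-piola-curl} gives $\breve\bv_K=\mathcal P_{\bp^{-1}}(\curlg w)|_K=\curlK w^e$. Lemma~\ref{lem:commuting-interpolation} gives $(I_\Sigma\bv)|_K=\curlK(I_\Phi w)|_K$. Applying \eqref{eq:general-velocity-interpolation-estimate} therefore bounds the left side of \eqref{eq:velocity-interpolation-target} by $h_K^{2-m}\|\curlg w\|_{2,\omega_K^\gamma}\lesssim h_K^{2-m}\|w\|_{3,\omega_K^\gamma}$. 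The last step holds because $\curlg$ is a first-order operator with $C^3$ coefficients on the $C^4$ surface.

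For the scalar estimate \eqref{eq:scalar-interpolation-target}, set $\eta_K:=w^e-(I_\Phi w)|_K$ on $K$. Since $|\curlK\eta_K|=|\nabla_K\eta_K|$ pointwise, the case $m=1$ of the velocity bound gives $|\eta_K|_{2,K}\lesssim h_K\|w\|_{3,\omega_K^\gamma}$. The case $m=0$ gives $|\eta_K|_{1,K}\lesssim h_K^2\|w\|_{3,\omega_K^\gamma}$. Both identities use $J_K$ being an isometry on $T_aK$.

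It remains to control $\|\eta_K\|_{0,K}$, and I expect this to be the main obstacle, because the curl determines $\eta_K$ only up to a constant. I would fix the constant using the vertex values. By \eqref{eq:stream-interpolation-data}, $\eta_K(a)=0$ at every vertex $a\in\mathcal V(K)$. The Poincar\'e-type inequality with a vertex-value functional, $\|\eta\|_{0,K}\lesssim |\eta(a)|h_K+h_K|\eta|_{1,K}+h_K^2|\eta|_{2,K}$, is valid for $H^2(K)$ functions on star-shaped $K$ via the scaled embedding $H^2\hookrightarrow C^0$ under \ref{mesh:star-shaped}. With $\eta_K(a)=0$ it yields $\|\eta_K\|_{0,K}\lesssim h_K^3\|w\|_{3,\omega_K^\gamma}$.

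To justify the embedding, I need $w^e\in H^2(K)$, and indeed $w^e\in H^3(K)$ with $\|w^e\|_{3,K}\lesssim\|w\|_{3,K^\gamma}$ by smoothness of $\bp$. I also need $(I_\Phi w)|_K\in\Phi(K)\subset H^2(K)$. Since a constant has zero gradient, the vertex anchoring is the only information needed beyond the gradient bounds. Combining the three seminorm bounds gives \eqref{eq:scalar-interpolation-target} for $m=0,1,2$.
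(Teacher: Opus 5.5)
Your proposal is correct and is essentially the paper's argument. The paper's proof is a single sentence: take $\bv=\curlg w$ in Lemma~\ref{prop:general-velocity-interpolation} and use the commuting relation. Your isometry observation $|\curlK\eta_K|=|\nabla_K\eta_K|$ and your vertex-anchored Poincar\'e step for $\|w^e-I_\Phi w\|_{0,K}$, using $(I_\Phi w)|_K(a)=w^e(a)$, fill in the details the paper leaves implicit.
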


\section{The virtual element method}
\label{sec:method}

Let
\begin{equation}\label{eq:continuous-divergence-kernel}
 \mathcal Z(\gamma):=
 \{\bv\in\bm H_t^1(\gamma):\divg\bv=0\}.
\end{equation}
The pressure-free formulation of \eqref{eq:continuous-stokes} is to find
$\bu\in\mathcal Z(\gamma)$ such that
\begin{equation}\label{eq:continuous-kernel-stokes}
 a_\gamma(\bu,\bv)
 :=(\eps_\gamma(\bu),\eps_\gamma(\bv))_\gamma+(\bu,\bv)_\gamma
 =(\bfv,\bv)_\gamma
 \qquad\forall\bv\in\mathcal Z(\gamma).
\end{equation}
We discretize \eqref{eq:continuous-kernel-stokes} on the full discrete
divergence kernel $\mathcal Z_h$ defined in
\eqref{eq:global-discrete-kernel}.

\subsection{The discrete formulation}
\label{subsec:global-discrete-problem}
\label{subsec:local-stream-forms}

Since the local divergence is constant, a function in the space
\[
 \mathcal Z(K):=\{\bv\in\Sigma(K):\divK\bv=0\}
 =\{\bv\in\Sigma(K):
     \sum_{e\subset\partial K}\int_e\bv\cdot\bn_K\,\mathrm ds=0\}
\]
is uniquely determined by all local velocity degrees of freedom except
one normal-flux moment.
The elementwise Stokes energy is
\begin{equation}
 a_K(\bv,\boldsymbol z)
 :=(\eps_K\bv,\eps_K\boldsymbol z)_K+(\bv,\boldsymbol z)_K.
 \label{eq:continuous-face-stokes-form}
\end{equation}

We use polynomial projections and a stabilization term to obtain a
computable discrete form.
For $\bv,\boldsymbol z\in\mathcal Z(K)$, define the discrete local Stokes form by
\begin{equation}
\begin{aligned}
 a_{h,K}^V(\bv,\boldsymbol z)
 :=a_K\bigl(\Pi_1^{\nabla,K}\bv,
                  \Pi_1^{\nabla,K}\boldsymbol z\bigr)
 +S_K^\Sigma
 \bigl((I-\Pi_1^{\nabla,K})\bv,
       (I-\Pi_1^{\nabla,K})\boldsymbol z\bigr).
\end{aligned}
\label{eq:discrete-local-stokes-form}
\end{equation}
The $H^1$-seminorm projector
$\Pi_1^{\nabla,K}:[H^1(K)]^2\to[\Pk_1(K)]^2$ is defined by
\begin{subequations}
\label{eq:hessian-projector-method}
\begin{align}
 (\nabla_K\Pi_1^{\nabla,K}\bv,\nabla_K\boldsymbol q)_K
 &=(\nabla_K\bv,\nabla_K\boldsymbol q)_K
 \qquad\forall\boldsymbol q\in[\Pk_1(K)]^2,
 \label{eq:hessian-projector-energy}\\
 \int_K\Pi_1^{\nabla,K}\bv&=\int_K\bv.
 \label{eq:hessian-projector-kernel}
\end{align}
\end{subequations}
These conditions uniquely determine the projector and give, for
$\bv\in[H^1(K)]^2$,
\begin{equation}\label{eq:projected-stream-velocity-means}
 \eps_K\Pi_1^{\nabla,K}\bv=\Pi_0^K(\eps_K\bv),
 \qquad
 \divK\Pi_1^{\nabla,K}\bv=\Pi_0^K\divK\bv.
\end{equation}
So the projector preserves
$\mathcal Z(K)$.

To define the stabilization, order the vertices $a_i$ along the positive
boundary orientation and let $e_i=[a_i,a_{i+1}]$, with cyclic indices.
For $\bv\in\mathcal Z(K)$, define the cumulative flux coordinates
$\xi_{K,i}(\bv)$ by
\[
 \xi_{K,i+1}(\bv)-\xi_{K,i}(\bv)
 =-\int_{e_i}\bv\cdot\bn_K\,\mathrm ds,
 \qquad \sum_i\xi_{K,i}(\bv)=0.
\]
The zero total flux makes these coordinates well defined.  The stabilization
$S_K^\Sigma$ is the inner product of the scaled velocity data: for
$\bv,\boldsymbol z\in\mathcal Z(K)$,
\[
\begin{aligned}
 \boldsymbol d_{\Sigma,K}(\bv)&:=
 \left((h_K^{-1}\xi_{K,i}(\bv))_i,\ (\bv(a))_a,\
 \left(\frac1{|e|}\int_e\bv\cdot\bt\,\mathrm ds\right)_e\right),\\
 S_K^\Sigma(\bv,\boldsymbol z)&:=
 \boldsymbol d_{\Sigma,K}(\bv)\cdot\boldsymbol d_{\Sigma,K}(\boldsymbol z).
\end{aligned}
\]

We assemble the local forms in this space.  For the load, let
$\mathcal R_h^{\rm c}$ be the velocity reconstruction of Subsection~\ref{subsec:local-companion}.
For $\bfv_h\in\bm L_{t,h}^2(\Gamma_h)$ and
$\bv_h,\boldsymbol z_h\in\mathcal Z_h$, set
\begin{subequations}\label{eq:divergence-free-induced-forms}
\begin{align}
 a_h^Z(\bv_h,\boldsymbol z_h)
 &:={}
 \sum_{K\in\Th}
 a_{h,K}^V\bigl(\bv_h|_K,\boldsymbol z_h|_K\bigr),
 \label{eq:divergence-free-bilinear-form}\\*
 \ell_h^Z(\bfv_h;\boldsymbol z_h)
 &:={}
 (\bfv_h,\mathcal R_h^{\rm c}\boldsymbol z_h)_{\Gamma_h}.
 \label{eq:divergence-free-load}
\end{align}
\end{subequations}
The discrete problem seeks $\bu_h\in\mathcal Z_h$ such that
\begin{equation}
 a_h^Z(\bu_h,\boldsymbol z_h)
 =\ell_h^Z(\bfv_h;\boldsymbol z_h)
 \qquad\forall\boldsymbol z_h\in\mathcal Z_h.
 \label{eq:topology-completed-stokes-method}
\end{equation}

The computability of the discrete bilinear form follows from the following lemma.
\begin{lemma}[Computability]
\label{lem:companion-data-computability}
For every $\bv\in\mathcal Z(K)$, the polynomial $\Pi_1^{\nabla,K}\bv$ is computable
from the degrees of freedom \ref{dof:velocity-vertex}--\ref{dof:velocity-edge}.
\end{lemma}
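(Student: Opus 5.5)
The plan is the standard VEM computability argument for the $H^1$-seminorm projector: integrate by parts so that only boundary traces enter, and note that the traces are known explicitly from the degrees of freedom.

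\emph{Step 1: the boundary trace is computable.} By the remark after the degrees of freedom in Subsection~\ref{sec:local-complex}, the data \ref{dof:velocity-vertex}--\ref{dof:velocity-edge} determine $\bv|_{\partial K}$. On each edge $e\subset\partial K$, each of the components $\bv\cdot\bn_K$ and $\bv\cdot\bt$ lies in $\Pk_2(e)$. Each is fixed by its two endpoint values, which are read off from the vertex vectors $\bv(a)$, together with its edge mean from \ref{dof:velocity-edge}. So the trace is an explicit piecewise quadratic built from the degrees of freedom.

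\emph{Step 2: the energy condition \eqref{eq:hessian-projector-energy}.} For $\boldsymbol q\in[\Pk_1(K)]^2$, the Hessian of $\boldsymbol q$ vanishes, so $\Delta_K\boldsymbol q=0$ and integration by parts gives
\[
 (\nabla_K\bv,\nabla_K\boldsymbol q)_K=\int_{\partial K}\bv\cdot(\nabla_K\boldsymbol q\,\bn_K)\,\mathrm ds .
\]
Here $\nabla_K\boldsymbol q\,\bn_K$ is constant on each edge. The right-hand side is therefore a sum of edge integrals of the known quadratic trace from Step~1, which are evaluated exactly. This determines $\nabla_K\Pi_1^{\nabla,K}\bv$. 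Indeed, the energy condition is a $4\times4$ symmetric positive definite system in the constant gradient, posed on $[\Pk_1(K)]^2$ modulo constants.

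\emph{Step 3: the constant part, condition \eqref{eq:hessian-projector-kernel}.} This is the only step needing care, because the interior mean $\int_K\bv$ is not a boundary quantity and $\Sigma(K)$ has no cell degrees of freedom. Here I will use that $\bv\in\mathcal Z(K)$ is divergence free, via Lemma~\ref{lem:local-stokes-complex}. For a constant vector $\boldsymbol c$, take the affine function $\psi:=\boldsymbol c\cdot(\bx-\bx_K^c)$, so that $\nabla_K\psi=\boldsymbol c$. Since $\divK\bv=0$,
\[
 \boldsymbol c\cdot\int_K\bv=\int_K\bv\cdot\nabla_K\psi
 =-\int_K\psi\,\divK\bv+\int_{\partial K}\psi\,\bv\cdot\bn_K\,\mathrm ds
 =\int_{\partial K}\psi\,\bv\cdot\bn_K\,\mathrm ds .
\]
The last integral involves only the quadratic normal trace. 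Letting $\boldsymbol c$ range over a basis of $T_aK$ therefore makes $\int_K\bv$ computable. Equivalently, one may write $\bv=\curlK\phi$ and note that the means of $\curlK\phi$ reduce to boundary integrals of $\phi$.

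Combining Steps~2 and~3 yields $\Pi_1^{\nabla,K}\bv$ uniquely, by the unique solvability of \eqref{eq:hessian-projector-method}. I expect Step~3 to be the main obstacle, since it is exactly where restricting the lemma to $\mathcal Z(K)$ rather than all of $\Sigma(K)$ matters. On the full space $\Sigma(K)$ the mean would not be computable from boundary data alone without a cell moment. The divergence-free constraint removes this gap.
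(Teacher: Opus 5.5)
Your proof is correct and is essentially the paper's argument. The degrees of freedom fix the quadratic boundary trace, integration by parts reduces \eqref{eq:hessian-projector-energy} to boundary integrals, and $\divK\bv=0$ turns the mean into $\int_K v_i=\int_{\partial K}x_i\,\bv\cdot\bn_K\,\mathrm ds$; your shift by $\bx_K^c$ is harmless because the total flux vanishes.

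One correction to your closing remark: the mean is also computable on all of $\Sigma(K)$, not only on $\mathcal Z(K)$. There $\divK\bv$ is a constant fixed by the boundary flux, so the same identity holds with the extra known term $-(\divK\bv)\int_K x_i$. The restriction to $\mathcal Z(K)$ therefore only simplifies the formula.
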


\begin{proof}
The degrees of freedom determine $\bv|_{\partial K}$.
Integration by parts expresses the gradient integrals in
\eqref{eq:hessian-projector-method} in terms of this trace.
Since $\divK\bv=0$, the identity
$\int_K v_i=\int_{\partial K}x_i\bv\cdot\bn_K\,\mathrm ds$
determines the mean in local planar coordinates.  These data determine
$\Pi_1^{\nabla,K}\bv$.
\end{proof}

\paragraph{Pressure-free implementation.}
To enforce the divergence-free constraint without a pressure multiplier,
we decompose $\mathcal Z_h$ into the discrete curl space and a
finite-dimensional complement.
Set $\Phi_{h,0}:=\{\phi_h\in\Phih:
\sum_{a\in\Vh}\phi_h(a)=0\}$ and choose an algebraic complement
$\mathcal H_h\subset\mathcal Z_h$ such that
$\mathcal Z_h=\curlh\Phi_{h,0}\oplus\mathcal H_h$.
Theorem~\ref{thm:global-exactness-cohomology} gives
$\dim\mathcal H_h=2-\chi(\Gamma_h)$.
Writing $\bu_h=\curlh\phi_h+\boldsymbol\eta_h$, problem
\eqref{eq:topology-completed-stokes-method} is equivalent to finding
$(\phi_h,\boldsymbol\eta_h)\in\Phi_{h,0}\times\mathcal H_h$ such that
\begin{equation}\label{eq:equivalent-stream-formulation}
\begin{aligned}
 a_h^Z(\curlh\phi_h+\boldsymbol\eta_h,
       \curlh\psi_h+\boldsymbol\zeta_h)
 =\ell_h^Z(\bfv_h;\curlh\psi_h+\boldsymbol\zeta_h)
 \qquad\forall(\psi_h,\boldsymbol\zeta_h)\in\Phi_{h,0}\times\mathcal H_h.
\end{aligned}
\end{equation}
The system is assembled from the velocity degrees of freedom of the curl
basis functions and a basis of $\mathcal H_h$, using the same forms $a_h^Z$ and
$\ell_h^Z$.

\subsection{Pressure-robust load discretization}
\label{subsec:local-companion}
For a virtual velocity $\boldsymbol z$, the natural local load
$(\bfv_h,\boldsymbol z)_K$ is not directly computable.  A directly computable choice
is $(\bfv_h,\Pi_1^{\nabla,K}\boldsymbol z)_K$.  However, the projected velocities need
not form a globally $\bm H(\divG;\Gamma_h)$-conforming field, so this load need not annihilate gradient forces.  We therefore
construct an explicit local velocity reconstruction $\mathcal R_K$ such that, for
$\boldsymbol z_h\in\mathcal Z_h$, the reconstruction
\begin{equation}
 (\mathcal R_h^{\rm c}\boldsymbol z_h)|_K
 :=\mathcal R_K(\boldsymbol z_h|_K)
 \label{eq:divergence-kernel-companion-reconstruction}
\end{equation}
is globally divergence-free.  The load \eqref{eq:divergence-free-load} is
then computable from $L^2$ data and annihilates gradient forces.
We use the local constrained minimization approach of
\cite[Section~4.2]{frerichs2022pressure}, with a piecewise quadratic
$\bm H(\operatorname{div};K)$ space and the full normal trace prescribed.

Join $\bx_K^c$ to the vertices of $K$ and denote the resulting fan by
$\widehat{\mathcal T}_K$.  The choice made after
Assumption~\ref{ass:mesh} and the vertex-separation condition imply that the
fan is uniformly shape regular and has a uniformly bounded number of
triangles.  For $\bv\in\mathcal Z(K)$, set
\[
\begin{aligned}
 \boldsymbol W_K(\bv):=\{\boldsymbol r\in\bm H(\operatorname{div};K):\;&
 \boldsymbol r|_T\in[\Pk_2(T)]^2
          \ \forall T\in\widehat{\mathcal T}_K,\quad
 \operatorname{div}\boldsymbol r=0,\\
 &\boldsymbol r\cdot\bn_K=\bv\cdot\bn_K
          \ \text{on }\partial K\}.
\end{aligned}
\]
For $\bv\in\mathcal Z(K)$, let
$\mathcal R_K\bv\in\boldsymbol W_K(\bv)$ be the solution of the local minimization
problem
\begin{equation}
 \mathcal R_K\bv
 :=\underset{\boldsymbol r\in\boldsymbol W_K(\bv)}{\operatorname{argmin}}
       \|\boldsymbol r-\Pi_1^{\nabla,K}\bv\|_{0,K}^2.
\label{eq:poisson-companion}
\end{equation}
The $\mathrm{BDM}_2$ interpolant of $\bv$ on $\widehat{\mathcal T}_K$
belongs to $\boldsymbol W_K(\bv)$, since it preserves the quadratic boundary
normal trace and has zero divergence.  The strictly convex minimization
problem is therefore uniquely solvable.
For $\bv\in\mathcal Z(K)$, the degrees of freedom determine
$\bv\cdot\bn_K|_{\partial K}$ and $\Pi_1^{\nabla,K}\bv$.
Thus $\mathcal R_K\bv$ is computable.

\begin{lemma}[Properties of the local velocity reconstruction]
\label{prop:scalar-companion}
The map $\mathcal R_K$ is linear, preserves
$[\Pk_1(K)]^2\cap\ker\divK$, and satisfies
\begin{equation}
 \|\bv-\mathcal R_K\bv\|_{0,K}
 \lesssim h_K|\bv|_{1,K}
 \qquad\forall\bv\in\mathcal Z(K).
 \label{eq:companion-local-estimates}
\end{equation}
\end{lemma}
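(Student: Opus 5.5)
Linearity follows from the structure of \eqref{eq:poisson-companion}. It is the $L^2(K)$-orthogonal projection of $\Pi_1^{\nabla,K}\bv$ onto the affine space $\boldsymbol W_K(\bv)=\boldsymbol r_0(\bv)+\boldsymbol W_K^0$. Here $\boldsymbol W_K^0$ is the fixed linear space of divergence-free piecewise quadratic $\bm H(\operatorname{div};K)$ fields with vanishing normal trace, and $\boldsymbol r_0(\bv)$ is any particular element, for instance the $\mathrm{BDM}_2$ interpolant, which depends linearly on $\bv$. Writing $P_0$ for the $L^2$ projection onto $\boldsymbol W_K^0$, we obtain
\[
\mathcal R_K\bv=\boldsymbol r_0(\bv)+P_0\bigl(\Pi_1^{\nabla,K}\bv-\boldsymbol r_0(\bv)\bigr),
\]
which is linear in $\bv$.

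Polynomial preservation: let $\boldsymbol p\in[\Pk_1(K)]^2$ with $\divK\boldsymbol p=0$. Then $\boldsymbol p\in\mathcal Z(K)$ by Lemma~\ref{lem:local-space-unisolvence}, and $\Pi_1^{\nabla,K}\boldsymbol p=\boldsymbol p$. Moreover $\boldsymbol p$ itself lies in $\boldsymbol W_K(\boldsymbol p)$, since it is globally smooth, piecewise quadratic, divergence-free, and has the correct normal trace. The minimum value is therefore zero and is attained at $\boldsymbol p$, so $\mathcal R_K\boldsymbol p=\boldsymbol p$.

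For the estimate, the plan is to compare against the feasible competitor $\boldsymbol r_0=I^{\mathrm{BDM}}\bv$ (the $\mathrm{BDM}_2$ interpolant on the fan $\widehat{\mathcal T}_K$). Minimality gives $\|\mathcal R_K\bv-\Pi_1^{\nabla,K}\bv\|_{0,K}\le\|I^{\mathrm{BDM}}\bv-\Pi_1^{\nabla,K}\bv\|_{0,K}$. The triangle inequality then yields
\[
\|\bv-\mathcal R_K\bv\|_{0,K}\le 2\|\bv-\Pi_1^{\nabla,K}\bv\|_{0,K}+\|\bv-I^{\mathrm{BDM}}\bv\|_{0,K}.
\]
The first term is bounded by $h_K|\bv|_{1,K}$: by \eqref{eq:hessian-projector-kernel} the difference $\bv-\Pi_1^{\nabla,K}\bv$ has zero mean, so Poincar\'e applies (valid on star-shaped $K$ by \ref{mesh:star-shaped}), together with the $H^1$-stability $|\Pi_1^{\nabla,K}\bv|_{1,K}\le|\bv|_{1,K}$.

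The main obstacle is the second term. One needs $\|\bv-I^{\mathrm{BDM}}\bv\|_{0,T}\lesssim h_T|\bv|_{1,T}$ on each fan triangle $T$ for a merely $H^1$ field, since $\bv$ is a virtual function with no higher regularity guaranteed. The standard $\mathrm{BDM}$ interpolant uses only edge normal moments and interior moments against Nédélec-type functions. These are well defined for $\bv\in[H^1(T)]^2$ by the trace theorem, and the interpolant reproduces $[\Pk_1(T)]^2$ (in fact $[\Pk_2(T)]^2$). A Bramble--Hilbert argument on each triangle, combined with scaled trace inequalities and the uniform shape regularity of $\widehat{\mathcal T}_K$ noted after \eqref{eq:poisson-companion}, then gives the bound with constants depending only on Assumption~\ref{ass:mesh}.

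One must also check that the interpolant is indeed divergence-free. This holds by the commuting property $\operatorname{div}I^{\mathrm{BDM}}=\Pi_1\operatorname{div}$ on each triangle, because $\divK\bv=0$. It matches $\bv\cdot\bn_K$ exactly on $\partial K$ because that trace is quadratic on each edge and the $\mathrm{BDM}_2$ edge moments against $\Pk_2$ reproduce it. Summing over the boundedly many triangles completes \eqref{eq:companion-local-estimates}.
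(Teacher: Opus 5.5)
Your proposal is correct and follows essentially the paper's argument: you compare $\mathcal R_K\bv$ with the $\mathrm{BDM}_2$ interpolant on the fan, bound $\bv-\Pi_1^{\nabla,K}\bv$ by the mean-zero Poincar\'e inequality, and bound $\bv-I^{\mathrm{BDM}}\bv$ by the standard $H^1$ interpolation estimate. The only difference is cosmetic: you use the minimality inequality $\|\mathcal R_K\bv-\Pi_1^{\nabla,K}\bv\|_{0,K}\le\|I^{\mathrm{BDM}}\bv-\Pi_1^{\nabla,K}\bv\|_{0,K}$ directly, whereas the paper tests the optimality (orthogonality) condition with $I_K\bv-\mathcal R_K\bv\in\boldsymbol W_K(\boldsymbol 0)$, and both routes give the same bound.
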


\begin{proof}
Linearity and preservation of $[\Pk_1(K)]^2\cap\ker\divK$ follow directly from
\eqref{eq:poisson-companion}.
Let $I_K\bv$ be the $\mathrm{BDM}_2$ interpolant on the
fan.  The $H^1(K)$ regularity makes its moments well defined, and
uniform shape regularity gives
\[
 \|\bv-I_K\bv\|_{0,K}
 \lesssim h_K|\bv|_{1,K}.
\]
Since $I_K\bv\in\boldsymbol W_K(\bv)$, one has
$I_K\bv-\mathcal R_K\bv\in\boldsymbol W_K(\boldsymbol0)$.  Testing
the optimality condition of \eqref{eq:poisson-companion} with
$I_K\bv-\mathcal R_K\bv$, splitting the difference, and using
Poincar\'e's inequality and the interpolation estimate give
\begin{align*}
 \|I_K\bv-\mathcal R_K\bv\|_{0,K}^2
 &=\bigl(I_K\bv-\bv,I_K\bv-\mathcal R_K\bv\bigr)_K\\*
 &\quad+\bigl(\bv-\Pi_1^{\nabla,K}\bv,
              I_K\bv-\mathcal R_K\bv\bigr)_K\\*
 &\lesssim h_K|\bv|_{1,K}\|I_K\bv-\mathcal R_K\bv\|_{0,K}.
\end{align*}
The triangle inequality proves \eqref{eq:companion-local-estimates}.
\end{proof}

\begin{lemma}[Exact pressure robustness]
\label{prop:closed-space-companion}
For every $\boldsymbol z_h\in\mathcal Z_h$,
$\mathcal R_h^{\rm c}\boldsymbol z_h\in\bm H(\divG;\Gamma_h)$ and
$\divG\mathcal R_h^{\rm c}\boldsymbol z_h=0$.  Moreover,
\begin{equation}
 \ell_h^Z(\bfv_h+\gradh\vartheta;\boldsymbol z_h)
 =\ell_h^Z(\bfv_h;\boldsymbol z_h)\qquad \forall \vartheta\in H^1(\Gamma_h).
 \label{eq:closed-space-exact-pr}
\end{equation}
\end{lemma}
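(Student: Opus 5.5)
The proof has three steps: conformity of the reconstructed field, its vanishing divergence, and then an integration by parts that kills gradient loads.

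\emph{Step 1: normal continuity.} Fix $\boldsymbol z_h\in\mathcal Z_h$ and write $\boldsymbol r_K:=\mathcal R_K(\boldsymbol z_h|_K)$. By the definition of $\boldsymbol W_K$, we have $\boldsymbol r_K\in\bm H(\operatorname{div};K)$ with $\operatorname{div}\boldsymbol r_K=0$ on $K$. Its normal trace satisfies $\boldsymbol r_K\cdot\bn_K=\boldsymbol z_K\cdot\bn_K$ pointwise on $\partial K$. For an edge $e=\partial K\cap\partial L$, this gives
\[
\jump{\mathcal R_h^{\rm c}\boldsymbol z_h\cdot\bn}=\jump{\boldsymbol z_h\cdot\bn}=0 .
\]
The last equality is pointwise, not only in mean. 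It holds because $\Sigmah\subset\bm H(\divG;\Gamma_h)$ by Lemma~\ref{lem:global-domain-conformity}, and for piecewise smooth tangential fields this inclusion is equivalent to vanishing conormal jumps.

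One point needs care: the planes of $K$ and $L$ differ, so the conormals $\bn_K$ and $\bn_L$ are not opposite in $\R^3$. The jump must therefore be read in the convention of \eqref{eq:edge-conormal-jumps}, that is, as the sum of the two outward conormal components. The cited characterization of $\bm H(\divG;\Gamma_h)$ is stated in exactly this convention.

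\emph{Step 2: zero divergence.} The field $\mathcal R_h^{\rm c}\boldsymbol z_h$ is piecewise polynomial on the fans and tangential facewise. It is also $\bm H(\operatorname{div})$-conforming inside each $K$, since the fan triangles are glued with normal continuity by construction. Its distributional surface divergence tested against $\vartheta\in H^1(\Gamma_h)$ is therefore
\[
\sum_K\Bigl((\operatorname{div}\boldsymbol r_K,\vartheta)_K-\int_{\partial K}\boldsymbol r_K\cdot\bn_K\vartheta\,\mathrm ds\Bigr).
\]
The element terms vanish because each $\boldsymbol r_K$ is divergence-free. The boundary terms cancel edge by edge by Step~1, since $\vartheta$ has a single-valued trace. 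Hence $\mathcal R_h^{\rm c}\boldsymbol z_h\in\bm H(\divG;\Gamma_h)$ and $\divG\mathcal R_h^{\rm c}\boldsymbol z_h=0$.

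\emph{Step 3: invariance under gradients.} Let $\vartheta\in H^1(\Gamma_h)$. Since $\gradh\vartheta$ is facewise tangential, the load difference is
\[
(\gradh\vartheta,\mathcal R_h^{\rm c}\boldsymbol z_h)_{\Gamma_h}
=\sum_K\Bigl(-(\vartheta,\operatorname{div}\boldsymbol r_K)_K+\int_{\partial K}\vartheta\,\boldsymbol r_K\cdot\bn_K\,\mathrm ds\Bigr).
\]
This is the same expression as in Step~2, so it equals zero. Linearity of $\ell_h^Z$ in $\bfv_h$ then gives \eqref{eq:closed-space-exact-pr}.

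The only delicate point is Step~1. It requires that the normal trace of $\boldsymbol z_h$ matches pointwise across nonplanar edges, not merely in the mean imposed by \eqref{eq:global-velocity-normal-assembly}. I would verify this directly. The trace $\boldsymbol z_K\cdot\bn_K|_e$ is quadratic, so it is determined by its two endpoint values and its edge mean. The endpoint values cancel across the edge by \eqref{eq:global-velocity-vertex-assembly} and \eqref{eq:vertex-transform-flux}. The means cancel by \eqref{eq:global-velocity-normal-assembly}. Hence the sum of the two quadratics vanishes identically on $e$.
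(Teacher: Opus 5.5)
Your proposal is correct and takes essentially the same approach as the paper: the normal-trace constraint in $\boldsymbol W_K$ transfers the conormal continuity of $\boldsymbol z_h$ to $\mathcal R_h^{\rm c}\boldsymbol z_h$, the local divergence constraint then gives $\divG\mathcal R_h^{\rm c}\boldsymbol z_h=0$, and integration by parts on the closed surface removes gradient loads. Your direct check that the quadratic conormal jump vanishes identically on each edge (zero endpoint values from \eqref{eq:vertex-transform-flux}, zero mean from \eqref{eq:global-velocity-normal-assembly}) is the same argument behind Lemma~\ref{lem:global-domain-conformity}, which the paper invokes without spelling it out.
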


\begin{proof}
For every $e\subset\partial K$, the normal-trace constraint
in \eqref{eq:poisson-companion} gives
\[
\begin{aligned}
 (\mathcal R_h^{\rm c}\boldsymbol z_h)\cdot\bn_K
 =(\mathcal R_K(\boldsymbol z_h|_K))\cdot\bn_K
 =\boldsymbol z_h|_K\cdot\bn_K.
\end{aligned}
\]
The last equality follows from
$\mathcal R_K(\boldsymbol z_h|_K)\in\boldsymbol W_K(\boldsymbol z_h|_K)$.
Thus $\mathcal R_h^{\rm c}\boldsymbol z_h$ inherits the normal-flux
conformity of $\boldsymbol z_h$ across the surface edges.
Membership in $\bm H(\operatorname{div};K)$ implies continuity of the normal
trace of $\mathcal R_h^{\rm c}\boldsymbol z_h$ across the interior fan edges.
The reconstructed field therefore belongs to $\bm H(\divG;\Gamma_h)$.

The local divergence constraint and normal-trace conformity give
$\divG\mathcal R_h^{\rm c}\boldsymbol z_h=0$.
Integration by parts on the closed surface proves
\eqref{eq:closed-space-exact-pr}.
\end{proof}

The reconstruction $\mathcal R_h^{\rm c}$ above supplies the load in the
virtual formulation \eqref{eq:topology-completed-stokes-method}.
Section~\ref{sec:explicit-representative} constructs a different,
piecewise-polynomial representative $E_h^\Sigma$ and gives an alternative
pressure-robust load.

\subsection{Stability}
\label{subsec:stability}

In this subsection, we prove that the discrete form is coercive on
$\mathcal Z_h$ with respect to the broken $H^1$ norm.  We first establish the
local properties of the VEM form and then obtain the global result from a Korn
inequality.

\begin{lemma}[Local stability]
\label{prop:uniform-local-stability}
Every $\bv\in\mathcal Z(K)$ satisfies
\begin{equation}
 \|\eps_K\bv\|_{0,K}^2+\|\bv\|_{0,K}^2
 \lesssim a_{h,K}^V(\bv,\bv)
 \lesssim |\bv|_{1,K}^2+\|\bv\|_{0,K}^2.
 \label{eq:uniform-local-form-bounds}
\end{equation}
Here $a_{h,K}^V$ is the discrete local Stokes form defined by
\eqref{eq:discrete-local-stokes-form}.  The constants depend only on the
constants in Assumption~\ref{ass:mesh}.
\end{lemma}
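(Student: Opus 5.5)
The plan is the standard VEM stability argument: a norm equivalence between the stabilization and the energy on the kernel of \(\Pi_1^{\nabla,K}\), combined with the orthogonality built into the projector.

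Write \(\bv=\bm p+\bm w\) with \(\bm p:=\Pi_1^{\nabla,K}\bv\) and \(\bm w:=(I-\Pi_1^{\nabla,K})\bv\). By \eqref{eq:projected-stream-velocity-means}, \(\bm p\in\mathcal Z(K)\), and hence \(\bm w\in\mathcal Z(K)\).

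\textbf{Step 1 (norm equivalence of the stabilization).} The core claim is
\[
S_K^\Sigma(\bm w,\bm w)\simeq h_K^{-1}\|\bm w\|_{0,\partial K}^2
\quad\text{for all }\bm w\in\mathcal Z(K).
\]
Here \(\bm w|_{\partial K}\) is a piecewise quadratic, continuous vector trace. On each edge, the quadratic normal trace \(\bm w\cdot\bn_K\) is determined by its endpoint values and its edge integral, and the same holds for \(\bm w\cdot\bt\). By scaling and edge-polynomial norm equivalence,
\[
h_K^{-1}\|\bm w\|_{0,\partial K}^2\simeq \sum_a|\bm w(a)|^2+\sum_e\Bigl|\tfrac1{|e|}\textstyle\int_e\bm w\cdot\bn_K\Bigr|^2+\sum_e\Bigl|\tfrac1{|e|}\textstyle\int_e\bm w\cdot\bt\Bigr|^2 .
\]
Next, the flux means \(|e|^{-1}\int_{e_i}\bm w\cdot\bn_K\) are differences of the cumulative coordinates scaled by \(h_K^{-1}\). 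Conversely, the \(\xi_{K,i}\) are recovered from the fluxes: they are partial sums normalized to zero mean, and the number of edges is uniformly bounded. Since \(|e|\simeq h_K\), the \(\ell^2\) norms of \((h_K^{-1}\xi_{K,i})_i\) and of the flux means are equivalent. This gives Step 1.

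\textbf{Step 2 (upper bound).} First, \(a_K(\bm p,\bm p)\le|\bm p|_{1,K}^2+\|\bm p\|_{0,K}^2\). The seminorm satisfies \(|\bm p|_{1,K}\le|\bv|_{1,K}\) by \eqref{eq:hessian-projector-energy}. For the \(L^2\) part, \(\|\bm p\|_{0,K}\le\|\bv\|_{0,K}+\|\bv-\bm p\|_{0,K}\), and \(\|\bv-\bm p\|_{0,K}\lesssim h_K|\bv|_{1,K}\) by the mean condition \eqref{eq:hessian-projector-kernel} and Poincar\'e's inequality. For the stabilization, Step 1 and the scaled trace inequality give
\[
S_K^\Sigma(\bm w,\bm w)\lesssim h_K^{-2}\|\bm w\|_{0,K}^2+|\bm w|_{1,K}^2\lesssim|\bv|_{1,K}^2 ,
\]
again by Poincar\'e, since \(\bm w\) has zero mean and \(|\bm w|_{1,K}\le|\bv|_{1,K}\).

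\textbf{Step 3 (lower bound).} Lemma~\ref{lem:local-velocity-estimates} together with Step 1 gives
\[
h_K^{-2}\|\bm w\|_{0,K}^2+|\bm w|_{1,K}^2\lesssim S_K^\Sigma(\bm w,\bm w).
\]
We then estimate
\[
\|\eps_K\bv\|_{0,K}^2+\|\bv\|_{0,K}^2\le 2a_K(\bm p,\bm p)+2\bigl(\|\eps_K\bm w\|_{0,K}^2+\|\bm w\|_{0,K}^2\bigr).
\]
The second group is bounded by \(|\bm w|_{1,K}^2+h_K^2\cdot h_K^{-2}\|\bm w\|_{0,K}^2\lesssim S_K^\Sigma(\bm w,\bm w)\), using \(h_K\le1\). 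Summing gives the lower bound in \eqref{eq:uniform-local-form-bounds}.

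The main obstacle is Step 1: checking that the cumulative flux coordinates control the normal edge means with constants depending only on Assumption~\ref{ass:mesh}. This works because \(\bm w\in\mathcal Z(K)\) has zero total flux, so the \(\xi\)'s are well defined, and because the number of edges is uniformly bounded.
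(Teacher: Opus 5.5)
Your proposal is correct and follows essentially the same route as the paper. Both arguments split $\bv=\Pi_1^{\nabla,K}\bv+\boldsymbol w$ and derive $S_K^\Sigma(\boldsymbol w,\boldsymbol w)\simeq h_K^{-1}\|\boldsymbol w\|_{0,\partial K}^2$ from the cumulative-flux differences and edge polynomial norm equivalence; they then combine the mean-zero Poincar\'e and scaled trace inequalities with Lemma~\ref{lem:local-velocity-estimates} to obtain $S_K^\Sigma(\boldsymbol w,\boldsymbol w)\simeq|\boldsymbol w|_{1,K}^2$ and $\|\boldsymbol w\|_{0,K}^2\lesssim h_K^2S_K^\Sigma(\boldsymbol w,\boldsymbol w)$, which give both bounds (you take the $L^2$ control of $\boldsymbol w$ directly from Lemma~\ref{lem:local-velocity-estimates}, a purely cosmetic difference).
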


\begin{proof}
Set $\boldsymbol p:=\Pi_1^{\nabla,K}\bv$ and
$\boldsymbol w:=(I-\Pi_1^{\nabla,K})\bv\in\mathcal Z(K)$.
By \eqref{eq:discrete-local-stokes-form},
\[
 a_{h,K}^V(\bv,\bv)
 =\|\eps_K\boldsymbol p\|_{0,K}^2+\|\boldsymbol p\|_{0,K}^2
   +S_K^\Sigma(\boldsymbol w,\boldsymbol w).
\]

Since the cumulative flux coordinates have zero sum, their edge differences
and polynomial norm equivalence on the
edges give
$S_K^\Sigma(\boldsymbol w,\boldsymbol w)\simeq h_K^{-1}\|\boldsymbol w\|_{0,\partial K}^2$.
Since $\Pi_1^{\nabla,K}\boldsymbol w=0$, the mean normalization gives
$\|\boldsymbol w\|_{0,K}\lesssim h_K|\boldsymbol w|_{1,K}$, so
the scaled trace inequality yields
$S_K^\Sigma(\boldsymbol w,\boldsymbol w)\lesssim|\boldsymbol w|_{1,K}^2$.
Conversely, Lemma~\ref{lem:local-velocity-estimates} applied to
$\boldsymbol w\in\Sigma(K)$ gives
$|\boldsymbol w|_{1,K}^2\lesssim h_K^{-1}\|\boldsymbol w\|_{0,\partial K}^2$.
Consequently,
\begin{equation}\label{eq:stream-projector-kernel-stability}
 S_K^\Sigma(\boldsymbol w,\boldsymbol w)\simeq |\boldsymbol w|_{1,K}^2,
 \qquad \|\boldsymbol w\|_{0,K}^2
 \lesssim h_K^2 S_K^\Sigma(\boldsymbol w,\boldsymbol w).
\end{equation}
Gradient orthogonality gives
$|\boldsymbol p|_{1,K}^2+|\boldsymbol w|_{1,K}^2=|\bv|_{1,K}^2$.
Together with \eqref{eq:discrete-local-stokes-form},
\eqref{eq:stream-projector-kernel-stability}, and $h_K\le1$, this yields
\begin{align*}
 \|\eps_K\bv\|_{0,K}^2+\|\bv\|_{0,K}^2
 &\lesssim\|\eps_K\boldsymbol p\|_{0,K}^2
       +\|\boldsymbol p\|_{0,K}^2+|\boldsymbol w|_{1,K}^2
 \simeq a_{h,K}^V(\bv,\bv),\\
 a_{h,K}^V(\bv,\bv)
 &\lesssim |\boldsymbol p|_{1,K}^2+\|\boldsymbol p\|_{0,K}^2
       +|\boldsymbol w|_{1,K}^2
 \lesssim |\bv|_{1,K}^2+\|\bv\|_{0,K}^2.
\end{align*}
This proves \eqref{eq:uniform-local-form-bounds}.
\end{proof}

\begin{theorem}[Korn estimate]
\label{thm:global-strain-control}
For sufficiently small $h$, the following estimate holds:
\begin{align}
 |\boldsymbol v_h|_{1,h}^2
 &\lesssim \|\eps_h\boldsymbol v_h\|_{0,\Gamma_h}^2
 +\|\boldsymbol v_h\|_{0,\Gamma_h}^2
 \qquad\forall \boldsymbol v_h\in\Sigmah.
 \label{eq:discrete-velocity-korn}
\end{align}
\end{theorem}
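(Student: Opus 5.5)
I would prove the discrete Korn inequality on $\Sigmah$ by the standard route for nonconforming spaces: a broken Korn inequality with jump terms, followed by control of those jumps through the weak continuity built into $\Sigmah$. The broken inequality I have in mind (in the spirit of Brenner's piecewise $H^1$ Korn inequality, transported to $\Gamma_h$) reads
\[
 |\bv_h|_{1,h}^2\lesssim\|\eps_h\bv_h\|_{0,\Gamma_h}^2+\|\bv_h\|_{0,\Gamma_h}^2
 +\sum_{e\in\Eh}h_e^{-1}\|\Pi_1^e\jump{\bv_h}\|_{0,e}^2 .
\]
The normal jumps vanish because $\Sigmah\subset\bm H(\divG;\Gamma_h)$. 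The tangential jumps have zero mean by Lemma~\ref{lem:global-domain-conformity} and are of higher order by Lemma~\ref{lem:higher-order-trace-estimates}. The jump terms should therefore contribute only $O(h^2)$ times the broken $H^1$ norm, which I can absorb for small $h$.

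Since $\Gamma_h$ is a polyhedral surface rather than a planar domain, I would not try to prove the broken Korn inequality on $\Gamma_h$ directly. Instead I would lift to $\gamma$.

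1. Given $\bv_h\in\Sigmah$, form $\widearc{\bv_h}=\mathcal P_{\bp}\bv_h$ facewise on $\gamma$. This is a piecewise $\bm H^1$ tangential field on the curved partition $\{K^\gamma\}$. By \eqref{eq:surface-piola-normal-trace}, its normal traces are continuous, so $\widearc{\bv_h}\in\bm H(\divg;\gamma)$.

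2. Compare the facewise quantities. The Piola map together with \eqref{eq:geometry-estimates} gives
\[
 \|\eps_\gamma\widearc{\bv_h}\|_{0,K^\gamma}\lesssim\|\eps_K\bv_h\|_{0,K}+h\|\bv_h\|_{1,K},
\]
and the full $H^1$ seminorms are equivalent on each face. The derivative of $\mu_h^{-1}(\bP-d\bH)$ is $O(1)$ times the field, and the deviation from $\bP_K$ is $O(h)$. I expect this step to be routine but index-heavy.

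3. Apply a broken Korn inequality on the smooth closed surface $\gamma$ for piecewise $H^1$ tangential fields with jump penalty $\sum h_e^{-1}\|\Pi_1\jump{\cdot}\|^2$. The $L^2$ term takes care of Killing fields. This would come either from citing a surface version of Brenner's result, or from reproving it by compactness together with the continuous surface Korn inequality $|\bv|_{1,\gamma}\lesssim\|\eps_\gamma\bv\|_0+\|\bv\|_0$ for $\bv\in\bm H^1_t(\gamma)$.

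4. Bound the jumps of $\widearc{\bv_h}$ across $e^\gamma$.
 The tangential part has two pieces. The first is $\mu_h^{-1}(\bP-d\bH)\jump{\bv_h\cdot\bt}$, which is controlled by \eqref{eq:velocity-tangential-jump-lifting} as $h_e^{3/2}\bigl(\|\bv_h\|_{0,\omega_e}+h_e|\bv_h|_{1,h,\omega_e}\bigr)$. The second comes from the mismatch of the transforms on $K$ and $L$. Its size is $O(h_e)$ from $\bnu_K-\bnu_L$ times the trace, giving $h_e^{1/2}\|\bv_h\|_{0,\omega_e}+h_e^{3/2}|\bv_h|_{1,h,\omega_e}$ after the scaled trace inequality.
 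After weighting by $h_e^{-1}$ and summing, the jump contribution is at most $C\bigl(\|\bv_h\|_{0,\Gamma_h}^2+h^2|\bv_h|_{1,h}^2\bigr)$.

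5. Combine the bounds and absorb $Ch^2|\bv_h|_{1,h}^2$ into the left-hand side for $h$ small. This yields \eqref{eq:discrete-velocity-korn}.

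The main obstacle is step 3, the broken Korn inequality on the curved surface with only an $L^2$ lower-order term and edge jump penalties. If a citable surface version is unavailable, I would prove it by contradiction and compactness in the broken space. The limit would be a continuous, globally $\bm H^1_t$ field with $\eps_\gamma=0$ and zero $L^2$ norm, which contradicts the normalization. Uniformity in $h$ would come from an enriching/averaging operator $E_h$ mapping piecewise fields to conforming $H^1$ tangential fields, with $|\bv-E_h\bv|_{1,h}^2\lesssim\sum h_e^{-1}\|\jump{\bv}\|^2$.

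Combined with step 4, this averaging argument could even replace step 3. That would require the full jump, not only its projection onto $\Pk_1$; the full jump is what the higher-order trace estimates control, so the argument remains consistent.
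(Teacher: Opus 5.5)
Your plan has a genuine gap at Step~3, and that step carries the entire content of the theorem. For Piola lifts of $\Sigma_h$, the $h$-uniform broken Korn inequality on the curved partition $\{K^\gamma\}$ that you postulate is equivalent to the theorem itself. Your Steps~1, 2 and~4 are the routine transfer between $\Gamma_h$ and $\gamma$ in both directions. You neither cite nor prove this inequality.

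The compactness route you sketch cannot supply it. Suppose the bound failed along a sequence, so that $|\bv_n|_{1,h_n}=1$ while $\|\eps_h\bv_n\|_0+\|\bv_n\|_0+(\text{jumps})\to0$. The only limit compactness gives you is the $L^2$ limit, which is $0$. Since the broken $H^1$ seminorm is merely weakly lower semicontinuous, this contradicts nothing. Already for conforming fields, Korn's inequality with an $L^2$ lower-order term comes from the Lions lemma, not from compactness; compactness only removes that term modulo rigid motions.

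The averaging alternative is a construction, not a remark. A bound $|\bv-E_h\bv|_{1,h}^2\lesssim\sum_e h_e^{-1}\|\jump{\bv}\|_{0,e}^2$ needs an inverse inequality for the jumps, so it fails for general piecewise $H^1$ fields. For Piola images of $\Sigma(K)$ you would have to do three things. You would need to build an $\bm H_t^1(\gamma)$-conforming extension on curved faces, whose traces are pulled back through different Piola maps on $K$ and $L$ and so no longer lie in $B_2(\partial K)$. You would need to prove the inverse-type estimate for those traces. You would also need the continuous Korn inequality on $\gamma$. None of this is done.

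The missing idea is that no Korn inequality, lift, or enriching operator is needed. Work on each planar face, with $v_t:=\bv\cdot\bt$ and $v_n:=\bv\cdot\bn_K$ on $e\subset\partial K$. The integrand $\nabla_K\bv:(\nabla_K\bv)^T-(\divK\bv)^2$ equals the divergence of $(\bv\cdot\nabla_K)\bv-\bv\,\divK\bv$. On a straight edge, its flux contains only tangential derivatives, which gives the exact identity
\[
 2\|\eps_K\bv\|_{0,K}^2-|\bv|_{1,K}^2-\|\divK\bv\|_{0,K}^2
 =\int_{\partial K}\bigl(v_t\,\partial_{\bt}v_n-v_n\,\partial_{\bt}v_t\bigr)\,\mathrm ds .
\]
Sum over the faces of the closed surface $\Gamma_h$, using the common edge tangent and $\jump{\bv_h\cdot\bn}=0$. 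The right-hand side becomes $\sum_e\int_e\bigl(\jump{\bv_h\cdot\bt}\,\partial_{\bt}\avg{\bv_h\cdot\bn}-\avg{\bv_h\cdot\bn}\,\partial_{\bt}\jump{\bv_h\cdot\bt}\bigr)\,\mathrm ds$.

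Lemma~\ref{lem:higher-order-trace-estimates} bounds the jump and its tangential derivative by $h_e^{3/2}(\cdots)$ and $h_e^{1/2}(\cdots)$. These exactly offset the inverse and trace bounds $h_e^{-3/2}(\cdots)$ and $h_e^{-1/2}(\cdots)$ for $\partial_{\bt}\avg{\bv_h\cdot\bn}$ and $\avg{\bv_h\cdot\bn}$. Hence each edge term is $\lesssim\|\bv_h\|_{0,\omega_e}^2+h_e^2|\bv_h|_{1,h,\omega_e}^2$, and the $h^2$ part is absorbed for small $h$.

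This is the paper's proof. It uses exactly the trace estimates of your Step~4, but feeds them into an exact identity on $\Gamma_h$. The $\|\bv_h\|_{0,\Gamma_h}^2$ term on the right then comes directly from these edge terms.
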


\begin{proof}
Fix $\boldsymbol v_h\in\Sigmah$.  Integration by parts, the edge
orientation convention, and $\jump{\boldsymbol v_h\cdot\bn}=0$ give the identity
\begin{equation*}
\begin{aligned}
 &2\|\eps_h\boldsymbol v_h\|_{0,\Gamma_h}^2
  -|\boldsymbol v_h|_{1,h}^2
  -\|\divh\boldsymbol v_h\|_{0,\Gamma_h}^2\\
 &\quad={}
 \sum_{e\in\Eh}\int_e
  \jump{\boldsymbol v_h\cdot\bt}
   \partial_{\bt}\avg{\boldsymbol v_h\cdot\bn}-\avg{\boldsymbol v_h\cdot\bn}
   \partial_{\bt}\jump{\boldsymbol v_h\cdot\bt} \,\mathrm{d}s.
\end{aligned}
\end{equation*}
The conormal traces and tangential jumps belong to $\Pk_2(e)$.
The polynomial inverse and trace inequalities, together with
\eqref{eq:velocity-tangential-jump-lifting}, give
\begin{equation*}
\begin{aligned}
 \|\avg{\boldsymbol v_h\cdot\bn}\|_{0,e}
 +h_e\|\partial_{\bt}
       \avg{\boldsymbol v_h\cdot\bn}\|_{0,e}
 &\lesssim h_e^{-1/2}
 \bigl(\|\boldsymbol v_h\|_{0,\omega_e}
       +h_e|\boldsymbol v_h|_{1,h,\omega_e}\bigr),\\
 \|\jump{\boldsymbol v_h\cdot\bt}\|_{0,e}
 +h_e\|\partial_{\bt}
       \jump{\boldsymbol v_h\cdot\bt}\|_{0,e}
 &\lesssim h_e^{3/2}
 \bigl(\|\boldsymbol v_h\|_{0,\omega_e}
       +h_e|\boldsymbol v_h|_{1,h,\omega_e}\bigr).
\end{aligned}
\end{equation*}
The Cauchy--Schwarz and bounded patch overlap now give
\[
\begin{aligned}  
 |\boldsymbol v_h|_{1,h}^2
 &\lesssim 2\|\eps_h\boldsymbol v_h\|_{0,\Gamma_h}^2 -\|\divh\boldsymbol v_h\|_{0,\Gamma_h}^2
 +\|\boldsymbol v_h\|_{0,\Gamma_h}^2
 +h^2|\boldsymbol v_h|_{1,h}^2\\
 &\lesssim \|\eps_h\boldsymbol v_h\|_{0,\Gamma_h}^2
  +\|\boldsymbol v_h\|_{0,\Gamma_h}^2
  +h^2|\boldsymbol v_h|_{1,h}^2.
\end{aligned} 
\]
For sufficiently small $h$, absorbing the last term proves
\eqref{eq:discrete-velocity-korn}.
\end{proof}

\begin{theorem}[Coercivity]
\label{thm:kernel-H1-stability}
For the form $a_h^Z$ defined in
\eqref{eq:divergence-free-bilinear-form}, it holds that
\begin{equation}
 a_h^Z(\boldsymbol z_h,\boldsymbol z_h)
 \simeq \|\boldsymbol z_h\|_{1,h}^2
 \qquad\forall\boldsymbol z_h\in\mathcal Z_h.
 \label{eq:kernel-H1-stability}
\end{equation}
\end{theorem}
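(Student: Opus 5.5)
The plan is to sum the local bounds of Lemma~\ref{prop:uniform-local-stability} over all faces and close the lower bound with the discrete Korn inequality of Theorem~\ref{thm:global-strain-control}. Throughout, $\|\boldsymbol z_h\|_{1,h}^2:=\|\boldsymbol z_h\|_{0,\Gamma_h}^2+|\boldsymbol z_h|_{1,h}^2$.

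Fix $\boldsymbol z_h\in\mathcal Z_h$. Since $\divh\boldsymbol z_h=0$ and $\boldsymbol z_h|_K\in\Sigma(K)$, every restriction $\boldsymbol z_h|_K$ lies in $\mathcal Z(K)$. Hence \eqref{eq:uniform-local-form-bounds} applies face by face.

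\textbf{Upper bound.} The right inequality in \eqref{eq:uniform-local-form-bounds} gives $a_{h,K}^V(\boldsymbol z_h|_K,\boldsymbol z_h|_K)\lesssim |\boldsymbol z_h|_{1,K}^2+\|\boldsymbol z_h\|_{0,K}^2$. Summing over $K\in\Th$ and using \eqref{eq:divergence-free-bilinear-form} yields $a_h^Z(\boldsymbol z_h,\boldsymbol z_h)\lesssim\|\boldsymbol z_h\|_{1,h}^2$.

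\textbf{Lower bound.} Summing the left inequality in \eqref{eq:uniform-local-form-bounds} gives
\[
 \|\eps_h\boldsymbol z_h\|_{0,\Gamma_h}^2+\|\boldsymbol z_h\|_{0,\Gamma_h}^2\lesssim a_h^Z(\boldsymbol z_h,\boldsymbol z_h).
\]
Because $\mathcal Z_h\subset\Sigmah$, the Korn estimate \eqref{eq:discrete-velocity-korn} bounds $|\boldsymbol z_h|_{1,h}^2$ by the left-hand side above. Adding $\|\boldsymbol z_h\|_{0,\Gamma_h}^2$ to both sides gives $\|\boldsymbol z_h\|_{1,h}^2\lesssim a_h^Z(\boldsymbol z_h,\boldsymbol z_h)$.

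The only delicate point is already handled by the earlier results. The local form controls only the strain $\eps_K$, not the full gradient, and it cannot control rigid-motion-like components on its own; the global Korn inequality restores control of the full broken gradient. That inequality requires $h$ sufficiently small, so the equivalence \eqref{eq:kernel-H1-stability} inherits this restriction, and all constants depend only on Assumption~\ref{ass:mesh}. The $L^2$ term in $a_K$ rules out any kernel, which is why no separate treatment of the harmonic complement $\mathcal H_h$ is needed.
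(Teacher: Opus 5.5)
Your proposal is correct and follows the paper's proof: you sum the local bounds \eqref{eq:uniform-local-form-bounds} over the restrictions $\boldsymbol z_h|_K\in\mathcal Z(K)$ and close the lower bound with the Korn estimate \eqref{eq:discrete-velocity-korn}. You also rightly note that the equivalence inherits the restriction to sufficiently small $h$ from that estimate.
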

\begin{proof}
Summing \eqref{eq:uniform-local-form-bounds} over the restrictions
$\boldsymbol z_h|_K\in\mathcal Z(K)$ gives
\begin{equation}
 \|\eps_h\boldsymbol z_h\|_{0,\Gamma_h}^2
 +\|\boldsymbol z_h\|_{0,\Gamma_h}^2
 \lesssim a_h^Z(\boldsymbol z_h,\boldsymbol z_h)
 \lesssim \|\boldsymbol z_h\|_{1,h}^2
 \qquad\forall\boldsymbol z_h\in\mathcal Z_h.
 \label{eq:kernel-form-bounds}
\end{equation}
Combining with \eqref{eq:discrete-velocity-korn} gives the result.
\end{proof}

Finally, \eqref{eq:companion-local-estimates} gives
$\|\mathcal R_h^{\rm c}\boldsymbol z_h\|_{0,\Gamma_h}
\lesssim\|\boldsymbol z_h\|_{1,h}$, so the load is bounded in the broken
$H^1$ norm.  By \eqref{eq:kernel-H1-stability}, for sufficiently small $h$
problem~\eqref{eq:topology-completed-stokes-method} has a unique solution
$\bu_h\in\mathcal Z_h$ satisfying
$\|\bu_h\|_{1,h}\lesssim\|\bfv_h\|_{0,\Gamma_h}$.

\section{An explicit computational representative}
\label{sec:explicit-representative}
The virtual spaces of Sections~\ref{sec:geometry}--\ref{sec:method} are
defined implicitly.  Here we construct an explicit piecewise-polynomial
representative on a macroelement whose local functions share the same
boundary data as the virtual functions.  Evaluating both the energy and the
load through this representative gives a fully computable formulation on the
original virtual degrees of freedom that is exactly equivalent to a direct
macroelement Galerkin method. This is an alternative to the virtual
formulation \eqref{eq:topology-completed-stokes-method}: it uses a different
energy and load. Section~\ref{sec:error-estimates} analyzes the original
virtual formulation, while this section establishes the well-posedness and
exact macroelement correspondence of the alternative.

\subsection{A macroelement representative of the complex}
\label{subsec:hct-complex}
Join $\bx_K^c$ to the vertices of $K$ to form the fan
$\widehat{\mathcal T}_K$ used in Subsection~\ref{subsec:local-companion}.
Split each fan triangle at its barycenter, and call the resulting
triangulation $\mathcal T_K^{\rm HCT}$. Under Assumption~\ref{ass:mesh},
these subtriangles are uniformly shape regular. Let
\begin{equation}\label{eq:hct-representative-space}
 \mathcal S_K^{\rm HCT}:=
 \{s\in C^1(\overline K):s|_T\in\Pk_3(T)
       \quad\forall T\in\mathcal T_K^{\rm HCT}\}\subset H^2(K)
\end{equation}
be the assembled Hsieh--Clough--Tocher space~\cite{percell1976clough}.
For $\phi\in\Phi(K)$, define the scalar representative by
\begin{equation}\label{eq:hct-scalar-representative}
 E_K^\Phi\phi:=
 \underset{\substack{s\in\mathcal S_K^{\rm HCT}\\
              s=\phi,\ \nabla_Ks=\nabla_K\phi
              \text{ on }\partial K}}
             {\operatorname{argmin}}\ |s|_{2,K}^2,
 \qquad \Phi^M(K):=E_K^\Phi\Phi(K).
\end{equation}
The cubic value trace and quadratic gradient trace of $\Phi(K)$ make the
constraints feasible by HCT unisolvence. The Hessian seminorm is a norm on
$\mathcal S_K^{\rm HCT}\cap H_0^2(K)$, so the interior extension is unique
and linear in the boundary data. Figure~\ref{fig:paired-local-dofs} shows
the boundary data and the locally eliminated HCT data.

For $\bv\in\Sigma(K)$, set $q=\divK\bv\in\Pk_0(K)$. Local exactness in
Lemma~\ref{lem:local-stokes-complex} gives a $\phi\in\Phi(K)$, unique up
to a constant, such that
\[
 \bv=\tfrac12(\bx-\bx_K^c)q+\curlK\phi.
\]
Define
\begin{equation}\label{eq:hct-velocity-representative}
 \begin{aligned}
 E_K^\Sigma\bv
  &:=\tfrac12(\bx-\bx_K^c)q+\curlK E_K^\Phi\phi,\\
 \Sigma^M(K)
  &:=\curlK\Phi^M(K)\oplus
       \operatorname{span}\{(\bx-\bx_K^c)/2\}.
 \end{aligned}
\end{equation}
This definition is independent of the constant in $\phi$.

\begin{lemma}[Local explicit representative]
\label{lem:hct-local-correspondence}
The maps $E_K^\Phi:\Phi(K)\to\Phi^M(K)$ and
$E_K^\Sigma:\Sigma(K)\to\Sigma^M(K)$ are isomorphisms preserving,
respectively, the full Cauchy and velocity boundary traces. They give a
commuting diagram of exact complexes:
\[
\begin{array}{cccccccccc}
0\longrightarrow&\mathbb R&\longrightarrow&
\Phi(K)&\xrightarrow{\curlK}&
\Sigma(K)&\xrightarrow{\divK}&
\Pk_0(K)&\longrightarrow&0\\
&\big\downarrow{\scriptstyle I}&&
\big\downarrow{\scriptstyle E_K^\Phi}&&
\big\downarrow{\scriptstyle E_K^\Sigma}&&
\big\downarrow{\scriptstyle I}&&\\
0\longrightarrow&\mathbb R&\longrightarrow&
\Phi^M(K)&\xrightarrow{\curlK}&
\Sigma^M(K)&\xrightarrow{\divK}&
\Pk_0(K)&\longrightarrow&0 .
\end{array}
\]
Moreover, $E_K^\Phi p=p$ for $p\in\Pk_3(K)$ and
$E_K^\Sigma\boldsymbol p=\boldsymbol p$ for
$\boldsymbol p\in[\Pk_1(K)]^2$.
\end{lemma}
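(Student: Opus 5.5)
The plan is to verify the scalar map first, then deduce everything for the velocity map from the decomposition in \eqref{eq:hct-velocity-representative}. Exactness of the bottom row comes for free once the diagram commutes and the vertical maps are isomorphisms.

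\emph{Scalar map.} Linearity and well-definedness of $E_K^\Phi$ were noted after \eqref{eq:hct-scalar-representative}. By construction $E_K^\Phi\phi$ has the same value trace and gradient trace as $\phi$ on $\partial K$, i.e.\ the full Cauchy trace is preserved.

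Injectivity follows from Lemma~\ref{lem:local-space-unisolvence}(ii). If $E_K^\Phi\phi=0$, then $\phi$ has vanishing Cauchy data, so all degrees of freedom \ref{dof:stream-vertex}--\ref{dof:stream-edge-normal} vanish and $\phi=0$. Hence $E_K^\Phi$ is an isomorphism onto its image $\Phi^M(K)$.

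For $p\in\Pk_3(K)$, note that $p\in\mathcal S_K^{\rm HCT}$ and $p\in\Phi(K)$. We show $p$ is the minimizer. For any admissible $s$, the difference $s-p$ lies in $\mathcal S_K^{\rm HCT}\cap H_0^2(K)$. Integrating by parts twice gives $(D_K^2p,D_K^2(s-p))_K=(\Delta_K^2p,s-p)_K=0$, since $D^4p=0$ and the boundary terms vanish for $H_0^2$ functions. Thus $|s|_{2,K}^2=|p|_{2,K}^2+|s-p|_{2,K}^2$, so $p$ minimizes, and uniqueness gives $E_K^\Phi p=p$. The same reasoning shows constants are reproduced, which gives the left square.

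\emph{Velocity map.} The definition of $E_K^\Sigma$ is independent of the constant in $\phi$ because $E_K^\Phi$ fixes constants and is linear. Since $E_K^\Phi$ preserves $\nabla_K\phi$ on $\partial K$, the trace of $\curlK E_K^\Phi\phi$ equals that of $\curlK\phi$, so $E_K^\Sigma\bv|_{\partial K}=\bv|_{\partial K}$.

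We have $\divK E_K^\Sigma\bv=q=\divK\bv$, since $\divK\curlK=0$ and $\divK(\bx-\bx_K^c)/2=1$. This is the right square. The middle square, $E_K^\Sigma\curlK\phi=\curlK E_K^\Phi\phi$, is the case $q=0$.

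Injectivity follows from trace preservation and Lemma~\ref{lem:local-space-unisolvence}(i), since the degrees of freedom \ref{dof:velocity-vertex}--\ref{dof:velocity-edge} are boundary data. Surjectivity onto $\Sigma^M(K)$: given $\curlK E_K^\Phi\phi+c(\bx-\bx_K^c)/2$, take $\bv=\curlK\phi+c(\bx-\bx_K^c)/2\in\Sigma(K)$. This lies in $\Sigma(K)$ by Lemma~\ref{lem:local-stokes-complex} and $\bx/2\in\Sigma(K)$. The sum defining $\Sigma^M(K)$ is direct, because the second summand has nonzero divergence.

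For $\boldsymbol p\in[\Pk_1(K)]^2$, write $q=\divK\boldsymbol p$. Then $\boldsymbol p-\tfrac12(\bx-\bx_K^c)q$ is an affine, divergence-free field, hence equals $\curlK\phi$ for some $\phi\in\Pk_2\subset\Pk_3$. Reproduction of $\Pk_3$ then gives $E_K^\Sigma\boldsymbol p=\boldsymbol p$.

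\emph{Exactness of the bottom row.} This follows by transport from Lemma~\ref{lem:local-stokes-complex} through the commuting isomorphisms. Directly: $\curlK$ has kernel $\mathbb R$ on $\Phi^M(K)\subset H^2(K)$. The kernel of $\divK$ on $\Sigma^M(K)$ is $\curlK\Phi^M(K)$ by the direct sum. Finally, $\divK$ maps onto $\Pk_0(K)$.

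The only step needing care is the orthogonality argument showing $E_K^\Phi p=p$. Its boundary terms vanish because $s-p\in H_0^2(K)$ on the polygon, so I expect no real obstacle.
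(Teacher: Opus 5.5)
Your proposal is correct and follows essentially the same route as the paper. Both arguments use the following ingredients:
\begin{itemize}
\item parametrization by the same Cauchy data, with unisolvence, to obtain the isomorphisms;
\item commutation read off directly from the definitions;
\item the orthogonality $(D_K^2p,D_K^2b)_K=(\Delta_K^2p,b)_K=0$ to obtain reproduction of $\Pk_3(K)$;
\item the decomposition of an affine velocity into a constant-divergence lifting plus the curl of a quadratic, to obtain reproduction of $[\Pk_1(K)]^2$.
\end{itemize}
You also write out details that the paper leaves implicit: injectivity via the boundary degrees of freedom, surjectivity onto $\Sigma^M(K)$, directness of the sum defining $\Sigma^M(K)$, and a direct check that the lower row is exact.
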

\begin{proof}
The two scalar spaces are uniquely parameterized by the same Cauchy data.
Together with the decomposition preceding
\eqref{eq:hct-velocity-representative}, this shows that both maps are
trace-preserving isomorphisms.
The definitions give
$E_K^\Sigma\curlK=\curlK E_K^\Phi$ and
$\divK E_K^\Sigma=\divK$. Hence the diagram commutes and the lower
row is exact.

If $p\in\Pk_3(K)$ and $b\in\mathcal S_K^{\rm HCT}\cap H_0^2(K)$, two
integrations by parts give
$(D_K^2p,D_K^2b)_K=(\Delta_K^2p,b)_K=0$.
Since $p$ is admissible in \eqref{eq:hct-scalar-representative}, it is the
unique minimizer.  Thus $E_K^\Phi$ reproduces $\Pk_3(K)$.  Every affine
velocity is a constant-divergence lifting plus the curl of a quadratic
polynomial, so $E_K^\Sigma$ reproduces $[\Pk_1(K)]^2$.
\end{proof}

Assemble $\Phi_h^M$ and $\Sigma_h^M$ from $\Phi^M(K)$ and $\Sigma^M(K)$
using exactly the conditions \eqref{eq:global-stream-assembly} and
\eqref{eq:global-velocity-assembly}. Let $E_h^\Phi$ and $E_h^\Sigma$
act facewise. Since the local maps preserve the complete boundary traces,
they respect the assembly rules, while their facewise inverses are recovered
from the same boundary data. Hence the macro spaces have the same global
degrees of freedom as the virtual spaces. It then follows directly from the
definitions that:

\begin{lemma}[Global explicit representative]
\label{lem:hct-global-correspondence}
The global explicit spaces satisfy
$\Phi_h^M\subset H^1(\Gamma_h)$ and
$\Sigma_h^M\subset \bm H(\divG;\Gamma_h)$.
Moreover, the facewise maps define isomorphisms
$E_h^\Phi:\Phih\to\Phi_h^M$ and
$E_h^\Sigma:\Sigmah\to\Sigma_h^M$, and
\begin{equation}\label{eq:hct-global-commuting}
E_h^\Sigma\curlh=\curlh E_h^\Phi,
\qquad
\divh E_h^\Sigma=\divh.
\end{equation}
In particular, with
$\mathcal Z_h^M:=\{\bv_h^M\in\Sigma_h^M:\divh\bv_h^M=0\}$,
one has $E_h^\Sigma\mathcal Z_h=\mathcal Z_h^M$.
\end{lemma}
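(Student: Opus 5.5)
The plan is to transfer everything facewise through Lemma~\ref{lem:hct-local-correspondence}. The key observation is that every assembly condition in \eqref{eq:global-stream-assembly} and \eqref{eq:global-velocity-assembly} involves only boundary traces of the local functions. These are vertex values, vertex gradients, edge normal-flux integrals, tangential edge means, conormal edge means, and the reference-face vertex data entering $T_e$. Since $E_K^\Phi$ preserves the full Cauchy trace $(\phi|_{\partial K},\nabla_K\phi|_{\partial K})$ and $E_K^\Sigma$ preserves $\bv|_{\partial K}$, a piecewise field $\phi_h$ with $\phi_h|_K\in\Phi(K)$ satisfies \eqref{eq:global-stream-assembly} if and only if $E_h^\Phi\phi_h$ does. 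The same holds for velocities with \eqref{eq:global-velocity-assembly}. For $T_e$ this equivalence needs one check: the vertex values on $K_a$ are unchanged, so $T_e(E_h^\Sigma\bv_h)=T_e(\bv_h)$. For the scalar constraint \eqref{eq:global-stream-normal-assembly}, I use $T_e(\curlh E_h^\Phi\phi_h)=T_e(\curlh\phi_h)$, because gradients at vertices are preserved.

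From this equivalence, $E_h^\Phi$ maps $\Phih$ into $\Phi_h^M$ and $E_h^\Sigma$ maps $\Sigmah$ into $\Sigma_h^M$. Injectivity is immediate from local injectivity. For surjectivity, take $\phi_h^M\in\Phi_h^M$ and apply the facewise inverse $(E_K^\Phi)^{-1}$. This produces local virtual functions with identical traces, and these satisfy the assembly rules by the same equivalence. The velocity case is identical.

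The commuting relations \eqref{eq:hct-global-commuting} hold facewise by Lemma~\ref{lem:hct-local-correspondence}, since $\curlh$ and $\divh$ act facewise. From $\divh E_h^\Sigma=\divh$ and bijectivity we get $E_h^\Sigma\mathcal Z_h\subset\mathcal Z_h^M$. Conversely, if $\bv_h^M\in\mathcal Z_h^M$, its preimage has the same facewise divergence, namely zero. This gives equality.

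For conformity, I would repeat the proof of Lemma~\ref{lem:global-domain-conformity}, which used only traces. For $\Sigma_h^M$, the vertex rule together with Lemma~\ref{lem:vertex-transform-edge} and the normal-flux condition make the quadratic conormal traces cancel, so a piecewise-$H^1$ tangential field lies in $\bm H(\divG;\Gamma_h)$. Here I must note that $\Sigma^M(K)\subset[H^1(K)]^2$, because HCT functions are $C^1$ and piecewise cubic, hence in $H^2(K)$. For $\Phi_h^M$, matching cubic edge traces from the vertex values and tangential derivatives, using $\bP_K\bt=\bt$, give $H^1$ conformity.

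The step I expect to need the most care is confirming that the trace data of the macro functions lie in the correct polynomial spaces (cubic value trace and quadratic gradient trace). Without this, the per-edge determination by the assembled data, which the conformity argument uses, fails. It follows because the macro function has the same boundary trace as the virtual one.
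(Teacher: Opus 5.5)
Your proposal is correct and takes the paper's route. The paper likewise argues that the facewise maps preserve the complete boundary traces, so they respect the trace-only assembly rules, and that their facewise inverses are recovered from the same data; conformity and the commuting relations are inherited from Lemma~\ref{lem:global-domain-conformity} and Lemma~\ref{lem:hct-local-correspondence}. You make explicit details the paper leaves implicit, namely the invariance of $T_e$ under the facewise maps and the inclusion $\Sigma^M(K)\subset[H^1(K)]^2$.
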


\subsection{A macro-induced variational realization}
\label{subsec:hct-variational}

We now define a variant of the virtual formulation in
Subsection~\ref{subsec:global-discrete-problem}.  The space $\mathcal Z_h$
and its degrees of freedom remain unchanged, while the local projector,
stabilization, and load are evaluated through the explicit representative.

For a local velocity $\bv\in\Sigma(K)$, define the macro-induced
Stokes-energy projector $\Pi_{1,K}^{E,a}\bv\in[\Pk_1(K)]^2$ by
\begin{equation}\label{eq:hct-energy-projector}
 a_K(\Pi_{1,K}^{E,a}\bv,\boldsymbol p)
 =a_K(E_K^\Sigma\bv,\boldsymbol p)
 \qquad\forall\boldsymbol p\in[\Pk_1(K)]^2.
\end{equation}
Here $a_K$ is the exact Stokes energy in
\eqref{eq:continuous-face-stokes-form}.
Define the macro-induced stabilization by
$
 S_K^E(\bv,\boldsymbol z)
 :=a_K(E_K^\Sigma\bv,E_K^\Sigma\boldsymbol z).
$
The corresponding local virtual Stokes form is
\begin{equation}\label{eq:hct-virtual-local-form}
 a_{h,K}^E(\bv,\boldsymbol z)
 :=a_K(\Pi_{1,K}^{E,a}\bv,
          \Pi_{1,K}^{E,a}\boldsymbol z)+S_K^E((I-\Pi_{1,K}^{E,a})\bv,
          (I-\Pi_{1,K}^{E,a})\boldsymbol z).
\end{equation}
The form $a_{h,K}^E$ has the same projection--stabilization structure as
$a_{h,K}^V$ in \eqref{eq:discrete-local-stokes-form}, and it is also computable.
Since $E_K^\Sigma\bv$ is uniquely determined by the boundary trace of $\bv$,
it is available from the original velocity degrees of freedom. Thus
$\Pi_{1,K}^{E,a}\bv$, and hence \eqref{eq:hct-virtual-local-form}, can be
computed by polynomial integration on $\mathcal T_K^{\rm HCT}$.
For $\bv_h,\boldsymbol z_h\in\mathcal Z_h$, define
\begin{equation}\label{eq:hct-pulled-back-forms}
 a_h^E(\bv_h,\boldsymbol z_h)
  :=\sum_{K\in\Th}
     a_{h,K}^E(\bv_h|_K,\boldsymbol z_h|_K),\qquad
 \ell_h^E(\bfv_h;\boldsymbol z_h)
  :=(\bfv_h,E_h^\Sigma\boldsymbol z_h)_{\Gamma_h}.
\end{equation}
The macro-induced virtual problem is to find
$\bu_h^E\in\mathcal Z_h$ such that
\begin{equation}\label{eq:hct-virtual-method}
 a_h^E(\bu_h^E,\boldsymbol z_h)
 =\ell_h^E(\bfv_h;\boldsymbol z_h)
 \qquad\forall\boldsymbol z_h\in\mathcal Z_h.
\end{equation}

The explicit space also admits a direct Galerkin formulation.  Since
$\mathcal Z_h^M$ consists of piecewise quadratic velocities on the HCT
subtriangles, no projection or stabilization is needed.  For
$\bfv_h\in\bm L_{t,h}^2(\Gamma_h)$, find
$\bu_h^M\in\mathcal Z_h^M$ such that
\begin{equation}\label{eq:hct-direct-method}
 \sum_{K\in\Th}a_K(\bu_h^M|_K,\boldsymbol z_h^M|_K)
  =(\bfv_h,\boldsymbol z_h^M)_{\Gamma_h}
  \qquad\forall\boldsymbol z_h^M\in\mathcal Z_h^M.
\end{equation}
The macro spaces have the same boundary traces and assembly conditions as the
virtual spaces.  Hence the Korn argument of
Theorem~\ref{thm:global-strain-control} also applies to $\Sigma_h^M$, and,
for sufficiently small $h$,
\[
 \sum_{K\in\Th}a_K(\boldsymbol z_h^M|_K,\boldsymbol z_h^M|_K)
 \simeq \|\boldsymbol z_h^M\|_{1,h}^2
 \qquad\forall\boldsymbol z_h^M\in\mathcal Z_h^M.
\]
In particular, the direct macroelement problem is well posed.  The next
result identifies it with the macro-induced virtual problem.
\begin{theorem}[Exact macro--virtual correspondence]
\label{thm:hct-variational-correspondence}
Problem~\eqref{eq:hct-virtual-method} has a unique virtual solution
$\bu_h^E$, and problem~\eqref{eq:hct-direct-method} has a unique macroelement
 solution $\bu_h^M$. With bases paired by $E_h^\Sigma$, the two problems have the same stiffness
 matrix and load vector. Their solutions satisfy
 \begin{equation}\label{eq:hct-solution-correspondence}
  \bu_h^M=E_h^\Sigma\bu_h^E.
 \end{equation}
\end{theorem}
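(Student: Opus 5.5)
The argument reduces to one local identity: on every face, the macro-induced virtual form equals the exact Stokes energy of the representatives,
\[
 a_{h,K}^E(\bv,\boldsymbol z)=a_K(E_K^\Sigma\bv,E_K^\Sigma\boldsymbol z)
 \qquad\forall\bv,\boldsymbol z\in\Sigma(K).
\]
Once this holds, summing over $K\in\Th$ gives $a_h^E(\bv_h,\boldsymbol z_h)=\sum_K a_K(E_h^\Sigma\bv_h|_K,E_h^\Sigma\boldsymbol z_h|_K)$ for all $\bv_h,\boldsymbol z_h\in\mathcal Z_h$. The load identity $\ell_h^E(\bfv_h;\boldsymbol z_h)=(\bfv_h,E_h^\Sigma\boldsymbol z_h)_{\Gamma_h}$ holds by definition \eqref{eq:hct-pulled-back-forms}.

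To prove the local identity, set $\boldsymbol p:=\Pi_{1,K}^{E,a}\bv$. This is well defined because $a_K$ is an inner product on $[\Pk_1(K)]^2$, owing to the $L^2$ term. By Lemma~\ref{lem:hct-local-correspondence}, $E_K^\Sigma$ is linear and reproduces affine fields. Hence $E_K^\Sigma(I-\Pi_{1,K}^{E,a})\bv=E_K^\Sigma\bv-\boldsymbol p$, and this difference is $a_K$-orthogonal to $[\Pk_1(K)]^2$ by \eqref{eq:hct-energy-projector}. The stabilization term therefore equals $a_K(E_K^\Sigma\bv-\boldsymbol p,E_K^\Sigma\boldsymbol z-\boldsymbol q)$, where $\boldsymbol q:=\Pi_{1,K}^{E,a}\boldsymbol z$. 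Expanding $a_K(E_K^\Sigma\bv,E_K^\Sigma\boldsymbol z)$ through the $a_K$-orthogonal splittings $E_K^\Sigma\bv=\boldsymbol p+(E_K^\Sigma\bv-\boldsymbol p)$ and $E_K^\Sigma\boldsymbol z=\boldsymbol q+(E_K^\Sigma\boldsymbol z-\boldsymbol q)$ kills the cross terms. The result is exactly $a_K(\boldsymbol p,\boldsymbol q)+S_K^E((I-\Pi_{1,K}^{E,a})\bv,(I-\Pi_{1,K}^{E,a})\boldsymbol z)$, which proves the identity. This Pythagorean step is the only real computation, and it rests on reproduction of affine fields together with linearity of $E_K^\Sigma$.

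For the matrix statement, fix a basis $\{\bm\zeta_i\}$ of $\mathcal Z_h$, for example the curl basis of $\Phi_{h,0}$ together with a basis of $\mathcal H_h$. By Lemma~\ref{lem:hct-global-correspondence}, $E_h^\Sigma$ is an isomorphism of $\mathcal Z_h$ onto $\mathcal Z_h^M$, so $\{E_h^\Sigma\bm\zeta_i\}$ is a basis of $\mathcal Z_h^M$. The topological couplings are covered because the whole of $\mathcal Z_h$, including the complement $\mathcal H_h$, is mapped. The two identities above then show that the stiffness entries $a_h^E(\bm\zeta_j,\bm\zeta_i)$ and $\sum_K a_K(E_h^\Sigma\bm\zeta_j,E_h^\Sigma\bm\zeta_i)$ coincide, and likewise the load entries.

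Well-posedness of the direct problem was established before the statement for sufficiently small $h$, via the Korn argument on $\Sigma_h^M$. Its stiffness matrix is therefore symmetric positive definite. Since the two problems share this matrix, problem \eqref{eq:hct-virtual-method} is uniquely solvable as well. Equal matrices and load vectors give equal coefficient vectors, so $\bu_h^M=\sum_j c_jE_h^\Sigma\bm\zeta_j=E_h^\Sigma\bu_h^E$ by linearity, which is \eqref{eq:hct-solution-correspondence}. Beyond the local identity, the only point needing care is confirming that the coercivity on $\mathcal Z_h^M$ is a statement about the same $h$-regime, so that it transfers to the virtual problem.
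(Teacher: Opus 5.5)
Your proposal is correct and follows the paper's proof essentially step for step. Linearity and affine reproduction of $E_K^\Sigma$ make $E_K^\Sigma(I-\Pi_{1,K}^{E,a})\bv$ $a_K$-orthogonal to $[\Pk_1(K)]^2$, the Pythagorean splitting then identifies $a_{h,K}^E$ with the pulled-back exact energy, and the isomorphism $E_h^\Sigma:\mathcal Z_h\to\mathcal Z_h^M$ transfers matrices, load vectors, and well-posedness.

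The $h$-regime caveat you raise at the end is not needed for unique solvability. The $L^2$ term in $a_K$ already makes the macro stiffness matrix positive definite on every mesh, so the Korn argument is only needed for uniform coercivity in $\|\cdot\|_{1,h}$.
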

\begin{proof}
For $\bv\in\Sigma(K)$, affine reproduction gives
$E_K^\Sigma\Pi_{1,K}^{E,a}\bv=\Pi_{1,K}^{E,a}\bv$.
Thus \eqref{eq:hct-energy-projector} implies
\[
 a_K\bigl(E_K^\Sigma(I-\Pi_{1,K}^{E,a})\bv,
           \boldsymbol p\bigr)=0
 \qquad\forall\boldsymbol p\in[\Pk_1(K)]^2.
\]
Applying this orthogonality to $\bv,\boldsymbol z\in\Sigma(K)$ gives
\begin{equation}\label{eq:hct-energy-decomposition}
 \begin{aligned}
 &\quad a_K(E_K^\Sigma\bv,E_K^\Sigma\boldsymbol z)\\
 ={}&a_K(E_K^\Sigma\Pi_{1,K}^{E,a}\bv,
         E_K^\Sigma\Pi_{1,K}^{E,a}\boldsymbol z)
 +a_K(E_K^\Sigma(I-\Pi_{1,K}^{E,a})\bv,
         E_K^\Sigma(I-\Pi_{1,K}^{E,a})\boldsymbol z)\\
 ={}&a_K(\Pi_{1,K}^{E,a}\bv,
          \Pi_{1,K}^{E,a}\boldsymbol z)
 +S_K^E((I-\Pi_{1,K}^{E,a})\bv,
          (I-\Pi_{1,K}^{E,a})\boldsymbol z)\\
 ={}&a_{h,K}^E(\bv,\boldsymbol z).
 \end{aligned}
\end{equation}
The identity \eqref{eq:hct-energy-decomposition} and the load definition in
\eqref{eq:hct-pulled-back-forms} show that the virtual forms are the pullbacks of those in
\eqref{eq:hct-direct-method}. The isomorphism in
Lemma~\ref{lem:hct-global-correspondence} therefore identifies the two
problems, including their matrices and load vectors in paired bases.
Well-posedness of the macro problem gives uniqueness on the virtual side and
the solution relation \eqref{eq:hct-solution-correspondence}.
\end{proof}

Since $E_h^\Sigma\mathcal Z_h=\mathcal Z_h^M\subset
\bm H(\divG;\Gamma_h)$ is divergence-free, integration by parts on the
closed surface gives the same gradient invariance as in
Lemma~\ref{prop:closed-space-companion}.

\section{Error estimates}
\label{sec:error-estimates}
We return to the virtual formulation
\eqref{eq:topology-completed-stokes-method} of Section~\ref{sec:method}.
In this section, we first estimate the consistency errors arising from the
discrete load, the VEM forms, and the surface approximation, and then derive
the broken $H^1$ and $L^2$ velocity error estimates.  We assume throughout
that $\Gamma_h$ satisfies Assumption~\ref{ass:mesh}.

We impose the following approximation property on the discrete load.

\begin{assumption}[Load approximation]
\label{ass:physical-data-interface}
For every tangential force $\bfv\in\bm L_t^2(\gamma)$, the datum
$\bfv_h\in\bm L_{t,h}^2(\Gamma_h)$ satisfies
\[
 \left|(
 \bfv_h,\boldsymbol w_h)_{\Gamma_h}
 -(\bfv,\widearc{\boldsymbol w_h})_\gamma\right|
 \lesssim h^2\|\bfv\|_{0,\gamma}
                 \|\boldsymbol w_h\|_{0,\Gamma_h}
 \qquad\forall\boldsymbol w_h\in\bm L_{t,h}^2(\Gamma_h).
\]
\end{assumption}
The canonical choice $\bfv_h=\bP_h\bfv^e$ satisfies this assumption.

For these estimates, we introduce two comparison forms.  The functional
$\ell(\boldsymbol v;\cdot)$ is used to compare the continuous and discrete
loads, while the broken bilinear form $a_h$ provides an intermediate form that
separates the VEM and geometric consistency errors.
For $\boldsymbol v\in\bm H_t^2(\gamma)$ and
$\boldsymbol z\in\bm H(\divg;\gamma)$, set
\[
 \ell(\boldsymbol v;\boldsymbol z)
 :=(-\bP\divg\eps_\gamma(\boldsymbol v)+\boldsymbol v,
     \boldsymbol z)_\gamma.
\]
In particular, the velocity solution $\bu$ of
\eqref{eq:continuous-stokes} satisfies
$\ell(\bu;\boldsymbol z)=(\bfv,\boldsymbol z)_\gamma$ for every
$\boldsymbol z\in\mathcal Z(\gamma)$.
For $\boldsymbol v,\boldsymbol z\in\bm H_{t,h}^1(\Gamma_h)$, define
\[
 a_h(\boldsymbol v,\boldsymbol z)
 :=\sum_{K\in\Th}\Bigl(
 (\eps_K\boldsymbol v,\eps_K\boldsymbol z)_K
 +(\boldsymbol v,\boldsymbol z)_K\Bigr).
\]

For $\bfv\in\bm L_t^2(\gamma)$, standard regularity for the surface Stokes
problem \eqref{eq:continuous-stokes}
\cite[cf. Lemma~2.1]{olshanskii2021infsup} gives
$(\bu,p)\in\bm H_t^2(\gamma)\times
(H^1(\gamma)\cap\mathring L^2(\gamma))$ and
\begin{equation}
 \|\bu\|_{2,\gamma}+\|p\|_{1,\gamma}
 \lesssim\|\bfv\|_{0,\gamma}.
 \label{eq:surface-stokes-regularity}
\end{equation}

\subsection{Consistency errors}

This subsection estimates, in turn, the errors due to the discrete load, the
VEM forms, and the surface geometry.

\begin{lemma}[Load consistency]
\label{lem:kernel-load-consistency}
Let $\bu\in\bm H_t^2(\gamma)$ be the velocity solution of
\eqref{eq:continuous-stokes} corresponding to
$\bfv\in\bm L_t^2(\gamma)$.  Then
\begin{subequations}
\begin{align}
 \left|
 \ell_h^Z(\bfv_h;\boldsymbol z_h)
 -\ell(\bu;\widearc{\boldsymbol z_h})
 \right|
 &\lesssim h\|\bfv\|_{0,\gamma}
           \|\boldsymbol z_h\|_{1,h}
 &&\forall\boldsymbol z_h\in\mathcal Z_h,
 \label{eq:complete-kernel-load-consistency}\\
 \left|
 \ell_h^Z(\bfv_h;\boldsymbol z_I)
 -\ell(\bu;\widearc{\boldsymbol z_I})
 \right|
 &\lesssim h^2\|\bfv\|_{0,\gamma}
                  \|\boldsymbol z\|_{2,\gamma}
 &&\forall\boldsymbol z\in
      \mathcal Z(\gamma)\cap\bm H_t^2(\gamma).
 \label{eq:paired-kernel-load-consistency}
\end{align}
\end{subequations}
where $\boldsymbol z_I:=I_\Sigma\boldsymbol z$.
\end{lemma}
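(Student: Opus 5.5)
The natural plan splits the error into a data-approximation part and a reconstruction part. The key observation is that $\ell(\bu;\cdot)$ equals $(\bfv-\gradg p,\cdot)_\gamma$ by \eqref{eq:continuous-stokes}. Since $\widearc{\mathcal R_h^{\rm c}\boldsymbol z_h}$ is divergence-free on $\gamma$ by \eqref{eq:surface-piola-div}, and lies in $\bm H(\divg;\gamma)$ by \eqref{eq:surface-piola-normal-trace} and Lemma~\ref{prop:closed-space-companion}, the pressure gradient annihilates it. Thus $\ell(\bu;\widearc{\mathcal R_h^{\rm c}\boldsymbol z_h})=(\bfv,\widearc{\mathcal R_h^{\rm c}\boldsymbol z_h})_\gamma$, and I would write
\[
 \ell_h^Z(\bfv_h;\boldsymbol z_h)-\ell(\bu;\widearc{\boldsymbol z_h})
 =\Bigl[(\bfv_h,\mathcal R_h^{\rm c}\boldsymbol z_h)_{\Gamma_h}
  -(\bfv,\widearc{\mathcal R_h^{\rm c}\boldsymbol z_h})_\gamma\Bigr]
 +\ell\bigl(\bu;\widearc{\mathcal R_h^{\rm c}\boldsymbol z_h-\boldsymbol z_h}\bigr).
\]
Assumption~\ref{ass:physical-data-interface} bounds the first bracket by $h^2\|\bfv\|_{0,\gamma}\|\mathcal R_h^{\rm c}\boldsymbol z_h\|_{0,\Gamma_h}\lesssim h^2\|\bfv\|_{0,\gamma}\|\boldsymbol z_h\|_{1,h}$, using Lemma~\ref{prop:scalar-companion}. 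For the second term, set $\boldsymbol g:=-\bP\divg\eps_\gamma(\bu)+\bu$, so that $\|\boldsymbol g\|_{0,\gamma}\lesssim\|\bfv\|_{0,\gamma}$ by \eqref{eq:surface-stokes-regularity}. Then the $L^2$ Piola equivalence \eqref{eq:surface-piola-norm-equivalence} and \eqref{eq:companion-local-estimates} give the bound $\|\boldsymbol g\|_{0,\gamma}\,h|\boldsymbol z_h|_{1,h}$. This proves \eqref{eq:complete-kernel-load-consistency}.

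For \eqref{eq:paired-kernel-load-consistency}, the same splitting with $\boldsymbol z_h=\boldsymbol z_I$ handles the first bracket, since $\|\boldsymbol z_I\|_{1,h}\lesssim\|\boldsymbol z\|_{2,\gamma}$ follows from Lemma~\ref{prop:general-velocity-interpolation} and \eqref{eq:surface-piola-norm-equivalence}. The remaining term $(\boldsymbol g,\widearc{(\mathcal R_h^{\rm c}-I)\boldsymbol z_I})_\gamma$ is the main obstacle, because it needs an extra power of $h$. I would pull it back to $\Gamma_h$, at an $O(h^2)$ cost in $\mu_h$ and the Piola factors, obtaining $\sum_K(\boldsymbol g_h,(\mathcal R_K-I)\boldsymbol z_I|_K)_K$, where $\boldsymbol g_h$ is an $L^2$-bounded pullback of $\boldsymbol g$. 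I would then exploit the fact that $\mathcal R_K$ preserves divergence-free affine fields. Let $\boldsymbol p_K:=\Pi_1^{\nabla,K}(\boldsymbol z_I|_K)$, which lies in $[\Pk_1(K)]^2\cap\ker\divK$ by \eqref{eq:projected-stream-velocity-means}. Then $(\mathcal R_K-I)\boldsymbol z_I=(\mathcal R_K-I)(\boldsymbol z_I-\boldsymbol p_K)$, and \eqref{eq:companion-local-estimates} gives
\[
 \|(\mathcal R_K-I)\boldsymbol z_I\|_{0,K}\lesssim h_K|\boldsymbol z_I-\boldsymbol p_K|_{1,K}.
\]
It remains to show $|\boldsymbol z_I-\boldsymbol p_K|_{1,K}\lesssim h_K\|\boldsymbol z\|_{2,\omega_K^\gamma}$. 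By the triangle inequality, the distance $|\boldsymbol z_I-\breve{\boldsymbol z}_K|_{1,K}\lesssim h_K\|\boldsymbol z\|_{2,\omega_K^\gamma}$ from Lemma~\ref{prop:general-velocity-interpolation} combines with $|\breve{\boldsymbol z}_K-\boldsymbol q|_{1,K}\lesssim h_K|\breve{\boldsymbol z}_K|_{2,K}$, where $\boldsymbol q$ is an affine approximant, together with the $H^1$-seminorm best-approximation property of $\Pi_1^{\nabla,K}$. Here $\boldsymbol p_K$ is automatically divergence-free because $\boldsymbol z_I\in\mathcal Z_h$ by Lemma~\ref{lem:commuting-interpolation}. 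Summing with Cauchy--Schwarz gives $h^2\|\bfv\|_{0,\gamma}\|\boldsymbol z\|_{2,\gamma}$.

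One subtle point needs checking: the divergence-free property of the Piola push-forward of $\mathcal R_h^{\rm c}\boldsymbol z_h$ requires normal-trace continuity on the curved edges $e^\gamma$. I would verify this via \eqref{eq:surface-piola-normal-trace}, applied in the forward direction, together with the conformity established in Lemma~\ref{prop:closed-space-companion}.
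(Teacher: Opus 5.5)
Your proof is correct and follows essentially the paper's route: insert the reconstructed load $(\bfv,\widearc{\mathcal R_h^{\rm c}\boldsymbol z_h})_\gamma$, apply Assumption~\ref{ass:physical-data-interface} to $\mathcal R_h^{\rm c}\boldsymbol z_h$, bound the remainder with \eqref{eq:companion-local-estimates}, and for the paired estimate use that $\mathcal R_K$ reproduces divergence-free affine fields. The differences are minor. The paper removes the pressure using the divergence-free field $\widearc{\boldsymbol z_h}$, so its remainder is directly $(\bfv,\widearc{(\mathcal R_h^{\rm c}-I)\boldsymbol z_h})_\gamma$; this avoids both the regularity bound on $\boldsymbol g$ and the subtriangle-wise normal-trace check you flag. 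It also subtracts $\Pi_1^{\nabla,K}\breve{\boldsymbol z}_K$ rather than $\Pi_1^{\nabla,K}(\boldsymbol z_I|_K)$, and either choice works.
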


\begin{proof}
For $\boldsymbol z_h\in\mathcal Z_h$,
\eqref{eq:surface-piola-div}--\eqref{eq:surface-piola-normal-trace} give
$\widearc{\boldsymbol z_h}\in\bm H(\divg;\gamma)$ and
$\divg\widearc{\boldsymbol z_h}=0$.  Hence
$\ell(\bu;\widearc{\boldsymbol z_h})
 =(\bfv,\widearc{\boldsymbol z_h})_\gamma$ by
\eqref{eq:continuous-stokes}.
Insert $(\bfv,\widearc{\mathcal R_h^{\rm c}\boldsymbol z_h})_\gamma$
between the two loads.  Assumption~\ref{ass:physical-data-interface},
applied to $\mathcal R_h^{\rm c}\boldsymbol z_h$, and the Piola norm
equivalence give
\begin{equation}\label{eq:load-1}
\begin{aligned}
 \left|\ell_h^Z(\bfv_h;\boldsymbol z_h)
       -\ell(\bu;\widearc{\boldsymbol z_h})\right|
 &\lesssim \|\bfv\|_{0,\gamma}
 \left(h^2\|\mathcal R_h^{\rm c}\boldsymbol z_h\|_{0,\Gamma_h}
       +\|\mathcal R_h^{\rm c}\boldsymbol z_h-\boldsymbol z_h\|_{0,\Gamma_h}
 \right)\\
 &\lesssim \|\bfv\|_{0,\gamma}
 \left(h^2\|\boldsymbol z_h\|_{0,\Gamma_h}
       +\|\mathcal R_h^{\rm c}\boldsymbol z_h-\boldsymbol z_h\|_{0,\Gamma_h}
 \right).
\end{aligned}
\end{equation}
The last step uses the triangle inequality and $h\le1$.
The local reconstruction estimate \eqref{eq:companion-local-estimates} gives
\[
\begin{aligned}
 \|\mathcal R_h^{\rm c}\boldsymbol z_h-\boldsymbol z_h\|_{0,K}
 =\|(\mathcal R_K-I)\boldsymbol z_h\|_{0,K}
 \lesssim h_K|\boldsymbol z_h|_{1,K}.
\end{aligned}
\]
Squaring and summing over $K$ gives
$\|\mathcal R_h^{\rm c}\boldsymbol z_h-\boldsymbol z_h\|_{0,\Gamma_h}
\lesssim h|\boldsymbol z_h|_{1,h}$.
Together with \eqref{eq:load-1}, this proves
\eqref{eq:complete-kernel-load-consistency}.

For $\boldsymbol z\in\mathcal Z(\gamma)\cap\bm H_t^2(\gamma)$,
Lemma~\ref{lem:commuting-interpolation} gives
$\boldsymbol z_I\in\mathcal Z_h$.  On each $K$, choose
$\boldsymbol p_K:=\Pi_1^{\nabla,K}\breve{\boldsymbol z}_K$.
The difference $\boldsymbol z_I|_K-\boldsymbol p_K$ belongs to
$\mathcal Z(K)$, since $\divK\boldsymbol p_K=0$.  Reproduction of divergence-free affine polynomials by $\mathcal R_K$,
\eqref{eq:companion-local-estimates}, and
\eqref{eq:general-velocity-interpolation-estimate} give
\[
\begin{aligned}
 \|\mathcal R_h^{\rm c}\boldsymbol z_I
       -\boldsymbol z_I\|_{0,K}
 &=\|(\mathcal R_K-I)
       (\boldsymbol z_I-\boldsymbol p_K)\|_{0,K}\\
 &\lesssim h_K |\boldsymbol z_I-\boldsymbol p_K|_{1,K}\\
 &\le h_K \left( |\boldsymbol z_I-\breve{\boldsymbol z}|_{1,K}
      +|\breve{\boldsymbol z}-\boldsymbol p_K|_{1,K}\right) \\
 &\lesssim h_K^2\|\boldsymbol z\|_{2,\omega_K^\gamma}.
\end{aligned}
\]
Squaring and summing over $K$, with the bounded overlap of the patches,
gives
$\|\mathcal R_h^{\rm c}\boldsymbol z_I-\boldsymbol z_I\|_{0,\Gamma_h}
\lesssim h^2\|\boldsymbol z\|_{2,\gamma}$.
The interpolation estimate also gives
$\|\boldsymbol z_I\|_{0,\Gamma_h}\lesssim\|\boldsymbol z\|_{2,\gamma}$.
Applying \eqref{eq:load-1} with $\boldsymbol z_h=\boldsymbol z_I$ proves
\eqref{eq:paired-kernel-load-consistency}.
\end{proof}

\begin{lemma}[VEM consistency]
\label{lem:complete-kernel-consistency}
Let $\boldsymbol v\in\mathcal Z(\gamma)\cap\bm H_t^2(\gamma)$.  Then
\begin{subequations}
\begin{align}
 \left|
 a_h^Z(\boldsymbol v_I,\boldsymbol z_h)
 -a_h(\breve{\boldsymbol v},\boldsymbol z_h)
 \right|
 &\lesssim h\|\boldsymbol v\|_{2,\gamma}
                 \|\boldsymbol z_h\|_{1,h}
 &&\forall\boldsymbol z_h\in\mathcal Z_h,
 \label{eq:complete-kernel-vem-consistency}\\
 \left|
 a_h^Z(\boldsymbol v_I,\boldsymbol z_I)
 -a_h(\boldsymbol v_I,\boldsymbol z_I)
 \right|
 &\lesssim h^2\|\boldsymbol v\|_{2,\gamma}
                  \|\boldsymbol z\|_{2,\gamma}
 &&\forall\boldsymbol z\in
       \mathcal Z(\gamma)\cap\bm H_t^2(\gamma).
 \label{eq:paired-kernel-vem-consistency}
\end{align}
\end{subequations}
where $\boldsymbol v_I:=I_\Sigma\boldsymbol v$ and
$\boldsymbol z_I:=I_\Sigma\boldsymbol z$.
\end{lemma}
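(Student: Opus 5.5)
Both estimates are element-by-element comparisons of the VEM form with the exact broken form. The tools are the projection/stabilization splitting in \eqref{eq:discrete-local-stokes-form}, polynomial consistency of $a_{h,K}^V$, and the interpolation bounds of Lemma~\ref{prop:general-velocity-interpolation}.

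\emph{Polynomial consistency.} First I would record that $a_{h,K}^V(\boldsymbol p,\boldsymbol z)=a_K(\boldsymbol p,\boldsymbol z)$ fails in general for the $L^2$ part. Only the strain part is consistent, since $\eps_K\Pi_1^{\nabla,K}\boldsymbol z=\Pi_0^K\eps_K\boldsymbol z$ by \eqref{eq:projected-stream-velocity-means}. So for $\boldsymbol p\in[\Pk_1(K)]^2\cap\ker\divK$ and $\boldsymbol z\in\mathcal Z(K)$,
\[
a_{h,K}^V(\boldsymbol p,\boldsymbol z)-a_K(\boldsymbol p,\boldsymbol z)
=(\boldsymbol p,\Pi_1^{\nabla,K}\boldsymbol z-\boldsymbol z)_K .
\]
This holds because $\Pi_1^{\nabla,K}\boldsymbol p=\boldsymbol p$, so the stabilization term vanishes. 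Moreover $\int_K(\Pi_1^{\nabla,K}\boldsymbol z-\boldsymbol z)=0$ by \eqref{eq:hessian-projector-kernel}, so the defect equals $(\boldsymbol p-\Pi_0^K\boldsymbol p,\Pi_1^{\nabla,K}\boldsymbol z-\boldsymbol z)_K\lesssim h_K^2|\boldsymbol p|_{1,K}|\boldsymbol z|_{1,K}$, using Poincar\'e twice.

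\emph{Proof of \eqref{eq:complete-kernel-vem-consistency}.} On each $K$, take $\boldsymbol p_K:=\Pi_1^{\nabla,K}\breve{\boldsymbol v}_K$. Its divergence is $\Pi_0^K\divK\breve{\boldsymbol v}_K=\Pi_0^K(\mu_h(\divg\boldsymbol v)^e)=0$ by \eqref{eq:surface-piola-div}, so $\boldsymbol p_K\in\mathcal Z(K)$. I would split
\[
a_{h,K}^V(\boldsymbol v_I,\boldsymbol z)-a_K(\breve{\boldsymbol v},\boldsymbol z)
=a_{h,K}^V(\boldsymbol v_I-\boldsymbol p_K,\boldsymbol z)
+\bigl[a_{h,K}^V(\boldsymbol p_K,\boldsymbol z)-a_K(\boldsymbol p_K,\boldsymbol z)\bigr]
+a_K(\boldsymbol p_K-\breve{\boldsymbol v},\boldsymbol z).
\]
The first term is bounded via Cauchy--Schwarz for the semi-inner product $a_{h,K}^V$ and the upper bound in Lemma~\ref{prop:uniform-local-stability}. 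The third term is bounded by Cauchy--Schwarz. For both I need $\|\boldsymbol v_I-\boldsymbol p_K\|_{1,K}+\|\breve{\boldsymbol v}-\boldsymbol p_K\|_{1,K}\lesssim h_K\|\boldsymbol v\|_{2,\omega_K^\gamma}$, which follows from Lemma~\ref{prop:general-velocity-interpolation} and projector approximation. The middle term is $O(h_K^2)$ by the identity above. Summing over $K$ with bounded overlap and \eqref{eq:surface-piola-norm-equivalence} gives the $h$ bound.

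\emph{Proof of \eqref{eq:paired-kernel-vem-consistency}.} This is the main obstacle, since a naive bound gives only $h\cdot h$ from $\|\boldsymbol v_I-\boldsymbol p_K\|_1\|\boldsymbol z_I\|_1$, and the $\|\boldsymbol z_I\|_1$ factor is only $O(1)$. I would use symmetric subtraction with $\boldsymbol q_K:=\Pi_1^{\nabla,K}\breve{\boldsymbol z}_K$, also divergence-free, and write
\[
a_{h,K}^V(\boldsymbol v_I,\boldsymbol z_I)-a_K(\boldsymbol v_I,\boldsymbol z_I)
=\bigl[a_{h,K}^V-a_K\bigr](\boldsymbol v_I-\boldsymbol p_K,\boldsymbol z_I-\boldsymbol q_K)
+\bigl[a_{h,K}^V-a_K\bigr](\boldsymbol p_K,\boldsymbol z_I)
+\bigl[a_{h,K}^V-a_K\bigr](\boldsymbol v_I-\boldsymbol p_K,\boldsymbol q_K).
\]
The first term is $\lesssim h_K^2\|\boldsymbol v\|_{2,\omega_K^\gamma}\|\boldsymbol z\|_{2,\omega_K^\gamma}$, by continuity of both forms in $\|\cdot\|_{1,K}$ together with the two $O(h_K)$ approximation bounds. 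The second and third terms are polynomial defects. By the identity above and symmetry of both forms, each is $\lesssim h_K^2|\boldsymbol p_K|_{1,K}|\boldsymbol z_I|_{1,K}$, and $|\boldsymbol p_K|_{1,K}\le|\breve{\boldsymbol v}|_{1,K}$. Summation then gives $h^2\|\boldsymbol v\|_{2,\gamma}\|\boldsymbol z\|_{2,\gamma}$, using $\|\boldsymbol z_I\|_{1,h}\lesssim\|\boldsymbol z\|_{2,\gamma}$.

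The remaining point to check is that the $L^2$ term in $a_K$ does not spoil the identity. The stabilization $S_K^\Sigma$ has no mass part, but it acts only on $(I-\Pi_1^{\nabla,K})$ components. The mass defect then reduces to the mean-zero pairing already handled.
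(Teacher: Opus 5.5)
Your proof is correct, but it follows a different route from the paper's.

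\emph{The paper's route.} The paper never inserts a polynomial approximant into the discrete form. It expands $a_h^Z(\bv_I,\boldsymbol z_h)-a_h(\bv_I,\boldsymbol z_h)$ exactly into three pieces:
\begin{itemize}
\item the stabilization term acting on $(I-\Pi_1^{\nabla,K})\bv_I$ and $(I-\Pi_1^{\nabla,K})\boldsymbol z_h$;
\item the strain-fluctuation term $-\bigl((I-\Pi_0^K)\eps_K\bv_I,(I-\Pi_0^K)\eps_K\boldsymbol z_h\bigr)_K$;
\item the mass defect, which is $O(h_K^2)$ by Poincar\'e.
\end{itemize}
The first two are products of two fluctuations, each $O(h_K)$ for an interpolant. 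So \eqref{eq:paired-kernel-vem-consistency} follows term by term. \eqref{eq:complete-kernel-vem-consistency} follows after adding $a_h(\bv_I-\breve{\bv},\boldsymbol z_h)$.

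\emph{Your route.} You use the classical abstract VEM argument. It rests on boundedness of $a_{h,K}^V$ and $a_K$ in $\|\cdot\|_{1,K}$ and on the near-polynomial consistency identity $a_{h,K}^V(\boldsymbol p,\boldsymbol z)-a_K(\boldsymbol p,\boldsymbol z)=(\boldsymbol p,\Pi_1^{\nabla,K}\boldsymbol z-\boldsymbol z)_K=O(h_K^2)$. You insert the divergence-free approximants $\Pi_1^{\nabla,K}\breve{\bv}_K$ and $\Pi_1^{\nabla,K}\breve{\boldsymbol z}_K$, and you correctly check via \eqref{eq:surface-piola-div} that they lie in $\mathcal Z(K)$. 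One insertion gives the $O(h)$ bound, and a symmetric double insertion gives the $O(h^2)$ bound.

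\emph{Comparison.} Your approach is more modular. It uses only the bounds of Lemma~\ref{prop:uniform-local-stability} and pinpoints that the only failure of exact polynomial consistency is the mass term, which uses $\Pi_1^{\nabla,K}$ rather than an $L^2$ projection. The paper's approach makes the fluctuation structure explicit, so the paired estimate needs no double insertion and no divergence-free polynomial approximant.

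One small slip: the third term in your paired splitting is bounded by $h_K^2|\boldsymbol q_K|_{1,K}|\bv_I-\boldsymbol p_K|_{1,K}$, obtained by applying your identity with the roles swapped. It is not bounded by $h_K^2|\boldsymbol p_K|_{1,K}|\boldsymbol z_I|_{1,K}$ as you wrote. The correct bound is even smaller, so your conclusion is unaffected.
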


\begin{proof}
For $\boldsymbol z_h\in\mathcal Z_h$,
\eqref{eq:discrete-local-stokes-form} and
\eqref{eq:projected-stream-velocity-means} give
$
 a_h^Z(\boldsymbol v_I,\boldsymbol z_h)
 -a_h(\boldsymbol v_I,\boldsymbol z_h)=I_1+I_2+I_3,
$
where
\[
\begin{aligned}
 I_1&:=\sum_{K\in\Th}S_K^\Sigma
       \bigl((I-\Pi_1^{\nabla,K})\boldsymbol v_I,
             (I-\Pi_1^{\nabla,K})\boldsymbol z_h\bigr),\\
 I_2&:=-\sum_{K\in\Th}
       \bigl((I-\Pi_0^K)\eps_K\boldsymbol v_I,
             (I-\Pi_0^K)\eps_K\boldsymbol z_h\bigr)_K,\\
 I_3&:=\sum_{K\in\Th}
       \left[\bigl(\Pi_1^{\nabla,K}\boldsymbol v_I,
                    \Pi_1^{\nabla,K}\boldsymbol z_h\bigr)_K
             -(\boldsymbol v_I,\boldsymbol z_h)_K\right].
\end{aligned}
\]

For $I_1$, gradient best approximation, the $H^1$-seminorm stability of $\Pi_1^K$,
polynomial approximation, and
\eqref{eq:general-velocity-interpolation-estimate} give
\[
\begin{aligned}
 |(I-\Pi_1^{\nabla,K})\boldsymbol v_I|_{1,K}
 &\le |(I-\Pi_1^K)\boldsymbol v_I|_{1,K}\\
 &\lesssim |\boldsymbol v_I-\breve{\boldsymbol v}|_{1,K}
              +h_K|\breve{\boldsymbol v}|_{2,K}
 \lesssim h_K\|\boldsymbol v\|_{2,\omega_K^\gamma}.
\end{aligned}
\]
The same estimate holds for $\boldsymbol z_I$.
For general $\boldsymbol z_h\in\mathcal Z_h$, gradient orthogonality gives
$|(I-\Pi_1^{\nabla,K})\boldsymbol z_h|_{1,K}\le|\boldsymbol z_h|_{1,K}$.
Thus \eqref{eq:stream-projector-kernel-stability}, Cauchy--Schwarz,
and bounded patch overlap yield
\[
\begin{alignedat}{2}
 |I_1|\lesssim h\|\boldsymbol v\|_{2,\gamma}|\boldsymbol z_h|_{1,h}
\quad\forall\boldsymbol z_h\in\mathcal Z_h,\qquad
 |I_1|\lesssim h^2\|\boldsymbol v\|_{2,\gamma}\|\boldsymbol z\|_{2,\gamma}
\quad\text{if }\boldsymbol z_h=\boldsymbol z_I.
\end{alignedat}
\]

For $I_2$, \eqref{eq:projected-stream-velocity-means},
Cauchy--Schwarz, and the same residual estimates give the needed estimates.

For $I_3$, the mean identity in
\eqref{eq:projected-stream-velocity-means} allows constant means to be
subtracted in the velocity pairings.  Cauchy--Schwarz, Poincar\'e's inequality,
and gradient orthogonality then give
\begin{equation}\label{eq:local-mass-consistency}
\begin{aligned}
 &\bigl|(\Pi_1^{\nabla,K}\boldsymbol v_I,
          \Pi_1^{\nabla,K}\boldsymbol z_h)_K
         -(\boldsymbol v_I,\boldsymbol z_h)_K\bigr|\\
 &\quad\le \|(I-\Pi_1^{\nabla,K})\boldsymbol v_I\|_{0,K}
       \|\Pi_1^{\nabla,K}\boldsymbol z_h
          -\Pi_0^K\boldsymbol z_h\|_{0,K}\\
 &\qquad+\|\boldsymbol v_I-\Pi_0^K\boldsymbol v_I\|_{0,K}
       \|(I-\Pi_1^{\nabla,K})\boldsymbol z_h\|_{0,K}\\
 &\quad\lesssim h_K\bigl(
       \|(I-\Pi_1^{\nabla,K})\boldsymbol v_I\|_{0,K}
       |\boldsymbol z_h|_{1,K}+|\boldsymbol v_I|_{1,K}
       \|(I-\Pi_1^{\nabla,K})\boldsymbol z_h\|_{0,K}\bigr)\\
 &\quad\lesssim h_K^2|\boldsymbol v_I|_{1,K}|\boldsymbol z_h|_{1,K}.
\end{aligned}
\end{equation}
Since $|\boldsymbol v_I|_{1,K}
\lesssim\|\boldsymbol v\|_{2,\omega_K^\gamma}$ by
\eqref{eq:general-velocity-interpolation-estimate}, summation gives
$
 |I_3|\lesssim h^2\|\boldsymbol v\|_{2,\gamma}|\boldsymbol z_h|_{1,h}.
$
For $\boldsymbol z_h=\boldsymbol z_I$, interpolation stability bounds
the right-hand side by
$Ch^2\|\boldsymbol v\|_{2,\gamma}\|\boldsymbol z\|_{2,\gamma}$.

Finally, \eqref{eq:general-velocity-interpolation-estimate} gives
\[
 |a_h(\boldsymbol v_I-\breve{\boldsymbol v},\boldsymbol z_h)|
 \lesssim h\|\boldsymbol v\|_{2,\gamma}\|\boldsymbol z_h\|_{1,h}.
\]
Adding this interpolation term to $I_1+I_2+I_3$ proves
\eqref{eq:complete-kernel-vem-consistency}.
The bounds for $\boldsymbol z_h=\boldsymbol z_I$ prove
\eqref{eq:paired-kernel-vem-consistency} directly.
\end{proof}

We estimate the geometric consistency errors using a Piola geometric
estimate and a weak interpolation estimate.  For broken arguments,
$a_\gamma$ is understood facewise.  The proof of the following lemma is
given in Appendix~\ref{app:piola-geometric-estimate}.

\begin{lemma}[Piola geometric estimates]
\label{lem:piola-geometric-comparison}
For every $K\in\Th$ and
$\boldsymbol v\in\bm H_t^1(K^\gamma)$, the Piola pullback satisfies
\begin{subequations}
\begin{align}
 \left|
 \eps_K\breve{\boldsymbol v}
 -(\eps_\gamma(\boldsymbol v))^e
 \right|
 &\lesssim h\left(
 |\nabla_K\breve{\boldsymbol v}|+|\breve{\boldsymbol v}|
 \right)
 &&\text{a.e. in }K,
 \label{eq:pointwise-piola-strain}\\
 \left\|
 \eps_K\breve{\boldsymbol v}
 -(\eps_\gamma(\boldsymbol v))^e
 \right\|_{0,K}
 &\lesssim h\|\boldsymbol v\|_{1,K^\gamma}.
 \label{eq:local-piola-strain}
\end{align}
For all $\boldsymbol w_h,\boldsymbol z_h\in\bm H_{t,h}^1(\Gamma_h)$,
\begin{align}
 \left|
 a_\gamma(\widearc{\boldsymbol w_h},\widearc{\boldsymbol z_h})
 -a_h(\boldsymbol w_h,\boldsymbol z_h)
 \right|
 &\lesssim h\|\boldsymbol w_h\|_{1,h}
                 \|\boldsymbol z_h\|_{1,h},
 \label{eq:piola-form-comparison}\\
 \left|
 (\widearc{\boldsymbol w_h},\widearc{\boldsymbol z_h})_\gamma
 -(\boldsymbol w_h,\boldsymbol z_h)_{\Gamma_h}
 \right|
 &\lesssim h^2\|\boldsymbol w_h\|_{0,\Gamma_h}
                  \|\boldsymbol z_h\|_{0,\Gamma_h}.
 \label{eq:piola-mass-comparison}
\end{align}
For $\boldsymbol v,\boldsymbol z\in
\mathcal Z(\gamma)\cap\bm H_t^2(\gamma)$, we also have
\begin{equation}
 \left|
 a_h(\breve{\boldsymbol v},\breve{\boldsymbol z})
 -a_\gamma(\boldsymbol v,\boldsymbol z)
 \right|
 \lesssim h^2\|\boldsymbol v\|_{2,\gamma}
                  \|\boldsymbol z\|_{2,\gamma}.
 \label{eq:paired-kernel-smooth-consistency}
\end{equation}
\end{subequations}
\end{lemma}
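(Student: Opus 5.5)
I will work throughout in the extended coordinates, writing $\bv^e=\bv\circ\bp$ on $K$ and using the explicit Piola pullback \eqref{eq:surface-piola},
\[
\breve\bv=\mu_h\bm B\,\bv^e,\qquad
\bm B:=\Bigl(\bI-\frac{\bnu\otimes\bnu_h}{\bnu\cdot\bnu_h}\Bigr)(\bI-d\bH)^{-1}.
\]
By \eqref{eq:geometry-estimates}, $\bm B$ differs from $\bP_K$ by $O(h)$, with $C^1$ bounds: $|\nabla_K\bm B|\lesssim1$ because $\bH$ and $\bnu$ are smooth. For the pointwise bound \eqref{eq:pointwise-piola-strain}, I differentiate the identity $\bv^e=\mu_h^{-1}(\bP-d\bH)\breve\bv$, that is, the forward transform, along $K$. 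The chain rule gives $\nabla_K\bv^e=(\nabla\bv^e)(\bP-d\bH)\bP_K$. Hence $(\nabla_\gamma\bv)^e=\bP\nabla\bv^e\bP$ differs from $\bP_K\nabla_K\bv^e\bP_K$ by $O(h)|\nabla_K\bv^e|$, using $|\bP-\bP_K|\lesssim h$ and $|d|\lesssim h^2$.

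Expanding $\nabla_K\bigl(\mu_h^{-1}(\bP-d\bH)\breve\bv\bigr)$ gives $\mu_h^{-1}(\bP-d\bH)\nabla_K\breve\bv$ plus terms bounded by $|\breve\bv|$. Symmetrizing and projecting, I then have to compare $\bP_K(\bP-d\bH)\nabla_K\breve\bv\,\bP_K$ with $\nabla_K\breve\bv$ for a tangential $\breve\bv$. The difference is $O(h)|\nabla_K\breve\bv|$. This yields \eqref{eq:pointwise-piola-strain}. Squaring, integrating, and invoking \eqref{eq:surface-piola-norm-equivalence} with $m=1$ gives \eqref{eq:local-piola-strain}.

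For \eqref{eq:piola-mass-comparison}, I change variables, $(\widearc{\bw_h},\widearc{\bz_h})_\gamma=\int_{\Gamma_h}\mu_h^{-1}(\bP-d\bH)^2\bw_h\cdot\bz_h$, and write $\bw_h\cdot(\bP-d\bH)^2\bz_h=\bw_h\cdot\bP\bz_h+O(h^2)|\bw_h||\bz_h|$. The key observation is that for tangential $\bw_h,\bz_h$ on $K$, $\bw_h\cdot\bP\bz_h=\bw_h\cdot\bz_h-(\bnu\cdot\bw_h)(\bnu\cdot\bz_h)$. Since $\bnu\cdot\bw_h=(\bnu-\bnu_K)\cdot\bw_h=O(h)|\bw_h|$, the correction is $O(h^2)$. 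Together with $|1-\mu_h|\lesssim h^2$, this gives the $h^2$ bound.

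For \eqref{eq:piola-form-comparison}, I apply \eqref{eq:pointwise-piola-strain} to $\bv=\widearc{\bw_h}$, for which $\breve\bv=\bw_h$. I then change variables in $(\eps_\gamma\widearc{\bw_h},\eps_\gamma\widearc{\bz_h})_\gamma$, which costs a factor $\mu_h=1+O(h^2)$. Cauchy--Schwarz together with the mass bound finishes this estimate.

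The main obstacle is \eqref{eq:paired-kernel-smooth-consistency}, since the pointwise strain error is only $O(h)$. My plan is to expand the pointwise error to first order. Writing $\bm E:=\eps_K\breve\bv-(\eps_\gamma\bv)^e$, I would show that
\[
\bm E=\bm E_1+O(h^2)(|\nabla\bv^e|+|\bv^e|),
\]
where $\bm E_1$ is linear in $\bv^e$ and its first derivatives and carries geometric factors $\bnu-\bnu_K$ and $\bP-\bP_K$. These factors are $O(h)$ and oscillate with mean $O(h^2)$ over patches. Then $a_h(\breve\bv,\breve\bz)-a_\gamma(\bv,\bz)$ reduces, up to $O(h^2)$, to
\[
\sum_K\bigl[(\bm E_1(\bv),\eps_\gamma\bz^e)_K+(\eps_\gamma\bv^e,\bm E_1(\bz))_K\bigr].
\]
First I would try a pointwise cancellation. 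Because $\bm E_1$ involves normal components $\bnu_K\cdot\bv^e$ and the normal part of the tangential gradient, and the strain $\eps_\gamma\bz$ is tangential to $\gamma$, the contraction $\bm E_1:\eps_\gamma\bz^e$ should vanish to first order. Indeed, the normal–tangential blocks of $\eps_K\breve\bv$ projected by $\bP$ contribute only at $O(h)\cdot O(h)$. If this pointwise cancellation fails for some term, the fallback is to write $\bnu-\bnu_K=\nabla_K(\text{quadratic-interpolation error of }d)$, integrate by parts on each $K$, and use $H^2$ regularity of $\bv,\bz$. The edge terms would be controlled by the $O(h^2)$ Lipschitz jumps of $d$ across edges. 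The mass part is already covered by \eqref{eq:piola-mass-comparison}.
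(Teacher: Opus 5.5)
Your arguments for \eqref{eq:pointwise-piola-strain}--\eqref{eq:piola-mass-comparison} are sound; they are what the paper imports from Demlow--Neilan. Your reduction of \eqref{eq:paired-kernel-smooth-consistency} to the mixed pairings $(\bm E(\bv),(\eps_\gamma(\boldsymbol z))^e)_{\Gamma_h}$ is also the paper's final step. The gap is in bounding these pairings by $h^2$.

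\textbf{The pointwise cancellation fails.} Your primary mechanism, a first-order pointwise cancellation in $\bm E_1:(\eps_\gamma(\boldsymbol z))^e$, is false. Differentiating the Piola factor in \eqref{eq:surface-piola} along $K$ produces terms like $(\bnu_K\cdot\bv^e)\bH\bP_K$, $\bv^e\otimes\nabla_K\mu_h$ and $\bH\bv^e\otimes\nabla_Kd$, where $\nabla_Kd=\bP_K\bnu$. These are of size $h|\bv^e|$ and are tangential rather than normal, so a tangential $\bm A$ does not annihilate them.

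The analogous terms in your sketch of \eqref{eq:pointwise-piola-strain} are only said to be bounded by $|\breve\bv|$. They are in fact $O(h)|\breve\bv|$, because their $O(1)$ part points along $\bnu$ and is removed by the projection; you should say so.

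A concrete counterexample: on a cylindrical patch $x^2+y^2=1$, let $K$ lie in the plane $x=\cos\theta_0$ and take $\bv=\bm e_z$. Then \eqref{eq:surface-piola} gives $\breve\bv=\mu_h\bm e_z$ with $\mu_h=\cos^2\theta/\cos\theta_0$. So $\eps_\gamma(\bv)=0$, whereas $\eps_K\breve\bv=\tfrac12\partial_y\mu_h\,(\bm e_y\otimes\bm e_z+\bm e_z\otimes\bm e_y)$ with $\partial_y\mu_h\approx-2y$. For $\boldsymbol z=\sin\theta\,\bm e_z$, the pointwise defect $\eps_K\breve\bv:(\eps_\gamma(\boldsymbol z))^e\approx-y$ is of exact order $h$. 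Only its integral over $K$ is small, by oddness about the face center.

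\textbf{What the paper does instead.} The $h^2$ bound is a weak statement about the normal error, and this is what the paper supplies. It uses the cofactor form $\breve\bv_K=J_K(\bP-d\bH)\boldsymbol\chi^e$ with $\boldsymbol\chi=-\bnu\times\bv$. This gives $\eps_h\breve\bv:\bm A^e=F(\bp(\cdot),\bnu_h)+r_h$, where $F$ is polynomial in the normal slot and $\|r_h\|_{L^1}\lesssim h^2$. It then invokes the Wu--Zhou geometric estimate, which compares $\int_{\Gamma_h}F(\bp,\bnu_h)$ with $\int_\gamma F(\cdot,\bnu)$ to order $h^2$ once the tangential $\bm m$-derivative of $F$ at $\bnu$ lies in $W^{1,1}$.

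\textbf{Repairing your fallback.} Your fallback is a hands-on version of this and can work, but as stated it needs three repairs.
\begin{enumerate}[label=(\roman*)]
 \item You must show that, after contraction with $\bm A$ and up to $O(h^2)$, the first-order defect equals $\bm g\cdot(\bnu-\bnu_K)$. Here $\bm g$ must be a tangential field on $\gamma$ built only from $\bv,\nabla_\gamma\bv,\bm A,\bH$, with $\|\bm g\|_{W^{1,1}(\gamma)}\lesssim\|\bv\|_{2,\gamma}\|\bm A\|_{1,\gamma}$.
 \item The vector $\bnu-\bnu_K$ is not a face gradient. The exact identity is $\bP_K\bnu=\nabla_Kd$ on $K$, while $\bnu_K\cdot(\bnu-\bnu_K)=O(h^2)$. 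Hence $\bm g\cdot(\bnu-\bnu_K)=\bm g\cdot\nabla_Kd+O(h^3)|\bm g|$.
 \item The function $d$ is continuous on $\Gamma_h$ with $|d|\lesssim h^2$, so there are no jumps of $d$ to exploit. Facewise integration by parts gives a volume term bounded by $h^2\|\bm g\|_{W^{1,1}}$, plus edge terms $\sum_e\int_e d\,\bm g\cdot(\bn_K+\bn_L)\,\mathrm ds$. Since $|\bn_K+\bn_L|\lesssim h$, the $W^{1,1}$ trace inequality bounds the edge terms by $h^3\sum_e\|\bm g\|_{L^1(e)}\lesssim h^2\|\bm g\|_{W^{1,1}}$.
\end{enumerate}
With these points you obtain $|(\eps_h\breve\bv,\bm A^e)_{\Gamma_h}-(\eps_\gamma(\bv),\bm A)_\gamma|\lesssim h^2\|\bv\|_{2,\gamma}\|\bm A\|_{1,\gamma}$. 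Your reduction then yields \eqref{eq:paired-kernel-smooth-consistency}.
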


\begin{lemma}[Weak interpolation estimate]
\label{lem:weak-strain-interpolation}
Let $\bm A\in H^1(\gamma;\mathbb R^{3\times3})$ be symmetric and tangential.
For $\boldsymbol z\in\mathcal Z(\gamma)\cap\bm H_t^2(\gamma)$ and
$\boldsymbol z_I:=I_\Sigma\boldsymbol z$, we have
\begin{equation}
 \left|
 (\eps_h(\boldsymbol z_I-\breve{\boldsymbol z}),\bm A^e)_{\Gamma_h}
 \right|
 \lesssim h^2\|\bm A\|_{1,\gamma}\|\boldsymbol z\|_{2,\gamma}.
 \label{eq:weak-strain-interpolation}
\end{equation}
\end{lemma}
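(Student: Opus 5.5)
Set $\boldsymbol\delta:=\boldsymbol z_I-\breve{\boldsymbol z}$, which is piecewise $H^1$ on $\Gamma_h$. The plan is to integrate by parts facewise and split the pairing into a volume part and an edge part. On each $K$, $\bm A^e$ is tangential to $\gamma$ but not to $K$, so we first replace it by $\bm A_K:=\bP_K\bm A^e\bP_K$. Since $\bm A=\bP\bm A\bP$ and $|\bP-\bP_K|\lesssim h$, we have $|\bm A^e-\bm A_K|\lesssim h|\bm A^e|$. Because $\eps_K\boldsymbol\delta$ is tangential to $K$, only $\bm A_K$ enters the pairing, so nothing is lost. Writing $(\eps_K\boldsymbol\delta,\bm A_K)_K=(\nabla_K\boldsymbol\delta,\bm A_K)_K$ by symmetry, we integrate by parts:
\[
 (\eps_h\boldsymbol\delta,\bm A^e)_{\Gamma_h}
 =-\sum_K(\boldsymbol\delta,\divK\bm A_K)_K
 +\sum_K\int_{\partial K}\boldsymbol\delta\cdot\bm A_K\bn_K\,\mathrm ds .
\]
The volume term is immediate. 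Lemma~\ref{prop:general-velocity-interpolation} gives $\|\boldsymbol\delta\|_{0,K}\lesssim h_K^2\|\boldsymbol z\|_{2,\omega_K^\gamma}$, and $\|\divK\bm A_K\|_{0,K}\lesssim\|\bm A\|_{1,K^\gamma}$ by the chain rule for $\nabla\bp$ and boundedness of $\bH$. After summation and bounded overlap this term is $O(h^2)$.

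For the boundary term, we decompose $\bm A_K\bn_K=(\bn_K^T\bm A_K\bn_K)\bn_K+(\bt^T\bm A_K\bn_K)\bt$ on each edge $e$ and regroup the contributions by edges. We then subtract edgewise constants $c_e$, namely the averages of the two conormal--conormal and tangential--conormal components of $\bm A^e$ over $e$. By the trace inequality, $\|\bm A_K-c_e\|_{0,e}\lesssim h_e^{1/2}\|\bm A\|_{1,\omega_e^\gamma}$ up to the $O(h)$ geometric mismatch between $\bn_K$, $\bn_L$ and the $K/L$ projections. The constant parts then require
\[
\int_e\jump{\boldsymbol\delta\cdot\bn}\,\mathrm ds
\quad\text{and}\quad
\int_e\jump{\boldsymbol\delta\cdot\bt}\,\mathrm ds
\]
to be small. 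For $\boldsymbol z_I$, both vanish exactly by Lemma~\ref{lem:global-domain-conformity}. For $\breve{\boldsymbol z}$, the normal jump vanishes by \eqref{eq:surface-piola-normal-trace} because $\boldsymbol z\in\bm H(\divg;\gamma)$. The tangential jump is bounded by Lemma~\ref{lem:edge-piola-tangential-comparison}, which gives $h_e^{3/2}\|\boldsymbol z\|_{1,\omega_e^\gamma}$; this pairs with $|c_e|h_e^{1/2}\lesssim\|\bm A\|_{0,e}h_e^{1/2}$ and sums to $O(h^2)$ via the trace inequality. The oscillation parts are bounded by $\|\boldsymbol\delta\|_{0,e}\lesssim h_e^{3/2}\|\boldsymbol z\|_{2,\omega_e^\gamma}$, which follows from the trace inequality and Lemma~\ref{prop:general-velocity-interpolation}. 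Multiplying by $h_e^{1/2}\|\bm A\|_{1,\omega_e^\gamma}$ gives $h_e^2$.

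The main obstacle is the geometric mismatch across an edge between the two faces' frames. The conormals $\bn_K$ and $-\bn_L$ differ by $O(h)$, and $\bm A_K$ differs from $\bm A_L$ by $O(h)|\bm A^e|$. A naive bound of the resulting term $O(h)\int_e|\boldsymbol\delta||\bm A^e|$ is $h\cdot h_e^{3/2}\|\boldsymbol z\|_2\cdot h_e^{-1/2}\|\bm A\|_{1}$, which is only $O(h^2)$ after summation. This still suffices, but it must be checked carefully. I would handle it by writing both face contributions in the common edge frame $(\bt,\bn_K)$ and using $\bn_L=-\bn_K+O(h)$, so the mismatch always multiplies $\boldsymbol\delta$ itself, never the unsmall $\boldsymbol z_I$.
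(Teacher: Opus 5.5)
Your plan has a genuine gap in one term, and it is the central difficulty of the lemma. After you subtract the edge constants, the $\breve{\boldsymbol z}$ part of the tangential constant term is
\[
 \sum_{e\in\Eh}c_e\int_e\jump{\breve{\boldsymbol z}\cdot\bt}\,\mathrm ds .
\]
This is essentially the constant-coefficient part of the paper's term $I_4=-\sum_{e}\int_e\jump{\breve{\boldsymbol z}\cdot\bt}\,\bt^T\bm A^e\avg{\bn}\,\mathrm ds$. Your bound for it loses a factor $h_e^{-1/2}$. Since $c_e$ is an edge mean, $|c_e|\le|e|^{-1/2}\|\bm A^e\|_{0,e}$, so Cauchy--Schwarz gives only $\bigl|c_e\int_e\jump{\breve{\boldsymbol z}\cdot\bt}\,\mathrm ds\bigr|\le\|\bm A^e\|_{0,e}\|\jump{\breve{\boldsymbol z}\cdot\bt}\|_{0,e}$. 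Combine \eqref{eq:edge-piola-tangential-comparison} with $\|\bm A^e\|_{0,e}\lesssim h_e^{-1/2}\|\bm A\|_{1,\omega_e^\gamma}$. This gives $h_e\|\bm A\|_{1,\omega_e^\gamma}\|\boldsymbol z\|_{1,\omega_e^\gamma}$ per edge, hence only $O(h)$ after summation.

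Rearranging local estimates cannot repair this. The jump $\jump{\breve{\boldsymbol z}\cdot\bt}$ is $O(h^2)$ pointwise, but nothing in the assumptions gives it a vanishing edge mean (unlike $\jump{\boldsymbol z_I\cdot\bt}$). The edge skeleton also has total length $O(h^{-1})$. This is the same first-order obstruction as in Remark~\ref{rem:uncorrected-surface-consistency}, now produced by the continuous field itself. Your ``main obstacle'' paragraph treats frame-mismatch terms that multiply $\boldsymbol\delta$; those are indeed harmless. The real obstacle is this term, where nothing small multiplies the jump.

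The missing ingredient is a global geometric cancellation argument that uses $\divg\boldsymbol z=0$, which your proof never exploits. The paper isolates this space-independent term and proves it is $O(h^2)$ in Lemma~\ref{lem:geometric-edge-error}. Using normal-flux continuity and symmetry of $\bm A$, it rewrites the edge sum as $J_1+J_2+J_3$:
\begin{itemize}
\item $J_1$ becomes the face integrals $-(\eps_h\breve{\boldsymbol z},\bm A^e)_{\Gamma_h}-(\breve{\boldsymbol z},\divh\bm A^e)_{\Gamma_h}$. These are compared with Green's formula on $\gamma$ through the second-order strain pairing \eqref{eq:piola-strain-pairing}.
\item $J_2$ carries the factor $\jump{\bn}$, whose tangential part is $O(h_e^2)$. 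It reduces to a flux estimate for $\bm A\boldsymbol z$.
\item $J_3$ uses $\divK\breve{\boldsymbol z}_K=0$ and tangentiality to reduce to a weak normal estimate $|(\bnu_h-\bnu^e,\bm w^e)_{\Gamma_h}|\lesssim h^2\|\bm w\|_{1,\gamma}$ with $\bm w=\bm A\bH\boldsymbol z$.
\end{itemize}
The rest of your argument is sound and corresponds to the paper's $I_1,I_2,I_3,I_5$. This covers the volume term, the conormal part (where both $\jump{\boldsymbol z_I\cdot\bn}$ and $\jump{\breve{\boldsymbol z}\cdot\bn}$ vanish), the $\boldsymbol z_I$ tangential constant part via the zero-mean jumps, and the oscillation and mismatch terms. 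The proof closes once you treat the $\breve{\boldsymbol z}$ jump term by a global argument of this type instead of edge-by-edge bounds.
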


\begin{proof}
Set $\boldsymbol e_h:=\boldsymbol z_I-\breve{\boldsymbol z}$ and
$\bar{\bm A}_e:=|e|^{-1}\int_e\bm A^e\,\mathrm{d}s$.
For $e=\partial K\cap\partial L$, write
$\jump{\bn}:=\bn_K+\bn_L$ and
$\avg{\bn}:=\tfrac12(\bn_K-\bn_L)$.
Since $\bm A$ is symmetric, elementwise integration by parts and
decomposition into tangential and conormal edge components give
$(\eps_h\boldsymbol e_h,\bm A^e)_{\Gamma_h}=\sum_{j=1}^5 I_j$, where
\[
\begin{aligned}
 I_1&:=-\sum_{K\in\Th}
   (\boldsymbol e_h,\divK(\bP_K\bm A^e\bP_K))_K,\\
 I_2&:=\sum_{K\in\Th}\sum_{e\subset\partial K}
   \int_e(\boldsymbol e_h\cdot\bn_K)
   \bn_K^T(\bm A^e-\bar{\bm A}_e)\bn_K\,\mathrm{d}s,\\
 I_3&:=\sum_{e\in\Eh}\int_e
   \jump{\boldsymbol z_I\cdot\bt}\,
   \bt^T(\bm A^e-\bar{\bm A}_e)\avg{\bn}\,\mathrm{d}s,\\
 I_4&:=-\sum_{e\in\Eh}\int_e
   \jump{\breve{\boldsymbol z}\cdot\bt}\,
   \bt^T\bm A^e\avg{\bn}\,\mathrm{d}s,\\
 I_5&:=\sum_{e\in\Eh}\int_e
   \avg{\boldsymbol e_h\cdot\bt}\,
   \bt^T\bm A^e\jump{\bn}\,\mathrm{d}s.
\end{aligned}
\]
In $I_2$ and $I_3$, we have subtracted the constant tensor
$\bar{\bm A}_e$.  This is justified by the zero edge means of the
conormal error in $I_2$ and the tangential jump of $\boldsymbol z_I$ in
$I_3$, which follow from \eqref{eq:interpolation-edge-flux-data} and
\eqref{eq:global-velocity-tangent-assembly}, respectively.
Estimates \eqref{eq:general-velocity-interpolation-estimate} and
\eqref{eq:edge-piola-tangential-comparison}, the scaled trace and
Poincar\'e inequalities, and bounded patch overlap give
\[
 |I_1|+|I_2|+|I_3|
 \lesssim h^2\|\bm A\|_{1,\gamma}\|\boldsymbol z\|_{2,\gamma}.
\]
For $I_5$, the common edge tangent and \eqref{eq:geometry-estimates} give
$|\bP\jump{\bn}|\lesssim h_e^2$.
Tangentiality of $\bm A$, the interpolation estimate, and the scaled
trace inequality therefore yield
$|I_5|\lesssim h^2\|\bm A\|_{1,\gamma}\|\boldsymbol z\|_{2,\gamma}$.

The term $I_4$ is independent of the discrete space.
Lemma~\ref{lem:geometric-edge-error} in
Appendix~\ref{app:geometric-edge-error} gives
$|I_4|\lesssim h^2\|\bm A\|_{1,\gamma}\|\boldsymbol z\|_{2,\gamma}$.
Combining these bounds proves \eqref{eq:weak-strain-interpolation}.
\end{proof}

\begin{remark}[Role of the edge correction]
\label{rem:uncorrected-surface-consistency}
For velocities $\widetilde{\boldsymbol z}_h$ in the uncorrected space
$\widetilde\Sigma_h$ defined in
\eqref{eq:uncorrected-global-velocity-space}, the tangential edge traces
are linear and the tangential jump need not have zero mean.
In the preceding proof, the zero-mean property is used in $I_3$ to
subtract $\bar{\bm A}_e$.  For $\widetilde{\boldsymbol z}_h$, this step
leaves the additional contribution
\[
 \sum_{e\in\Eh}\int_e
 \jump{\widetilde{\boldsymbol z}_h\cdot\bt}\,
 \bt^T\bar{\bm A}_e\avg{\bn}\,\mathrm{d}s.
\]
This term admits only an $O(h)$
bound.  Thus the second-order weak estimate
\eqref{eq:weak-strain-interpolation}, needed for the $L^2$ error analysis,
is not guaranteed for these uncorrected velocities.
The shared edge constraint \eqref{eq:global-velocity-tangent-assembly}
eliminates this additional contribution to $I_3$.
\end{remark}

The preceding estimates yield the geometric consistency bounds.

\begin{lemma}[Geometric consistency]
\label{lem:surface-consistency}
Let $\boldsymbol v\in\mathcal Z(\gamma)\cap\bm H_t^2(\gamma)$.  Then
\begin{subequations}
 \begin{align}
 \left|
 a_h(\breve{\boldsymbol v},\boldsymbol z_h)
 -\ell(\boldsymbol v;\widearc{\boldsymbol z_h})
 \right|
 &\lesssim h\|\boldsymbol v\|_{2,\gamma}
                 \|\boldsymbol z_h\|_{1,h}
 &&\forall\boldsymbol z_h\in\Sigmah,
 \label{eq:complete-kernel-surface-consistency}\\
 \left|
 a_h(\breve{\boldsymbol v},\boldsymbol z_I)
 -\ell(\boldsymbol v;\widearc{\boldsymbol z_I})
 \right|
 &\lesssim h^2\|\boldsymbol v\|_{2,\gamma}
                  \|\boldsymbol z\|_{2,\gamma}
 &&\forall\boldsymbol z\in
       \mathcal Z(\gamma)\cap\bm H_t^2(\gamma),
 \label{eq:paired-kernel-strong-consistency}
\end{align}
\end{subequations}
where $\boldsymbol z_I:=I_\Sigma\boldsymbol z$.
\end{lemma}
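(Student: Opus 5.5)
The plan is to route both estimates through the Piola transfer of $\boldsymbol z_h$ to $\gamma$ and the comparison results of Lemma~\ref{lem:piola-geometric-comparison}. We first rewrite $\ell(\boldsymbol v;\widearc{\boldsymbol z_h})$ as a strain pairing on $\gamma$. For $\boldsymbol z_h\in\Sigmah$, the field $\widearc{\boldsymbol z_h}$ lies in $\bm H(\divg;\gamma)$ by \eqref{eq:surface-piola-normal-trace}. It is only piecewise $\bm H^1$ on the curved faces $K^\gamma$, however, so facewise integration by parts gives
\[
 \ell(\boldsymbol v;\widearc{\boldsymbol z_h})
 =a_\gamma(\boldsymbol v,\widearc{\boldsymbol z_h})
 -\sum_{K\in\Th}\int_{\partial K^\gamma}
 \eps_\gamma(\boldsymbol v)\bn_{K^\gamma}\cdot\widearc{\boldsymbol z_h}\,\mathrm ds_\gamma ,
\]
where $a_\gamma$ is understood facewise. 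We split the edge term into conormal and tangential parts. The conormal part cancels, because the Piola transform preserves normal traces and $\jump{\boldsymbol z_h\cdot\bn}=0$. What remains is a sum over curved edges of $\jump{\widearc{\boldsymbol z_h}\cdot\bt_\gamma}$ against $\bt_\gamma^T\eps_\gamma(\boldsymbol v)\bn_{K^\gamma}$. Since $\breve{\boldsymbol v}$ is the Piola pullback of $\boldsymbol v$, we then write
\[
 a_h(\breve{\boldsymbol v},\boldsymbol z_h)-\ell(\boldsymbol v;\widearc{\boldsymbol z_h})
 =\bigl[a_h(\breve{\boldsymbol v},\boldsymbol z_h)
 -a_\gamma(\widearc{\breve{\boldsymbol v}},\widearc{\boldsymbol z_h})\bigr]
 +\text{(edge term)} .
\]

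For \eqref{eq:complete-kernel-surface-consistency}, the bracket is bounded by \eqref{eq:piola-form-comparison} together with $\|\breve{\boldsymbol v}\|_{1,h}\lesssim\|\boldsymbol v\|_{1,\gamma}$ from \eqref{eq:surface-piola-norm-equivalence}. For the edge term, we pull back to flat edges, where the tangential jump of $\widearc{\boldsymbol z_h}$ differs from $\jump{\boldsymbol z_h\cdot\bt}$ only by $O(h^2)$ pointwise factors. Lemma~\ref{lem:higher-order-trace-estimates} gives $\|\jump{\boldsymbol z_h\cdot\bt}\|_{0,e}\lesssim h_e^{3/2}\|\boldsymbol z_h\|_{1,h,\omega_e}$. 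A scaled trace inequality gives $\|\eps_\gamma(\boldsymbol v)\|_{0,e^\gamma}\lesssim h_e^{-1/2}\|\boldsymbol v\|_{2,\omega_e^\gamma}$. Cauchy--Schwarz and summation then give $O(h)$, which is more than enough.

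For \eqref{eq:paired-kernel-strong-consistency} we need second order, and we decompose differently. We write $a_h(\breve{\boldsymbol v},\boldsymbol z_I)=a_h(\breve{\boldsymbol v},\breve{\boldsymbol z})+a_h(\breve{\boldsymbol v},\boldsymbol z_I-\breve{\boldsymbol z})$. The first term equals $a_\gamma(\boldsymbol v,\boldsymbol z)+O(h^2)$ by \eqref{eq:paired-kernel-smooth-consistency}, and $a_\gamma(\boldsymbol v,\boldsymbol z)=\ell(\boldsymbol v;\boldsymbol z)$ because $\boldsymbol z\in\bm H_t^1(\gamma)$. The second term consists of a strain part and a mass part. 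For the strain part, we replace $\eps_K\breve{\boldsymbol v}$ by $(\eps_\gamma(\boldsymbol v))^e$. The pointwise error from \eqref{eq:pointwise-piola-strain} is $O(h)$ and multiplies $\|\eps_h(\boldsymbol z_I-\breve{\boldsymbol z})\|_0=O(h)$ by \eqref{eq:general-velocity-interpolation-estimate}, giving $O(h^2)$. The remaining piece is exactly the weak strain interpolation term in Lemma~\ref{lem:weak-strain-interpolation} with $\bm A=\eps_\gamma(\boldsymbol v)\in H^1$, which is $O(h^2)$. The mass part is bounded by $\|\breve{\boldsymbol v}\|_0\|\boldsymbol z_I-\breve{\boldsymbol z}\|_0\lesssim h^2$. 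It remains to show $\ell(\boldsymbol v;\boldsymbol z)-\ell(\boldsymbol v;\widearc{\boldsymbol z_I})=O(h^2)$. This equals $(\boldsymbol g,\boldsymbol z-\widearc{\boldsymbol z_I})_\gamma$ with $\boldsymbol g:=-\bP\divg\eps_\gamma(\boldsymbol v)+\boldsymbol v\in\bm L^2_t$, and by the Piola norm equivalence and \eqref{eq:general-velocity-interpolation-estimate} it is bounded by $\|\boldsymbol v\|_{2,\gamma}\,h^2\|\boldsymbol z\|_{2,\gamma}$.

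The main obstacle in the second estimate has already been isolated into Lemma~\ref{lem:weak-strain-interpolation}, so that part is routine. The delicate step I expect is the first estimate: making the integration by parts on curved faces with only piecewise regular $\widearc{\boldsymbol z_h}$ rigorous. This requires checking that the conormal traces on $e^\gamma$ from the two sides are opposite, so the normal-flux parts cancel exactly, and that the tangential-jump comparison between $\widearc{\boldsymbol z_h}$ and $\boldsymbol z_h$ introduces only $O(h^2)$ pointwise factors, via the bound $|\breve\bv_K-\bP_K\bv^e|\lesssim h^2|\bv^e|$ used for Lemma~\ref{lem:edge-piola-tangential-comparison}. If this becomes cumbersome, I would instead integrate by parts on the flat faces, as in the proof of Lemma~\ref{lem:weak-strain-interpolation}. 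There we would write $a_h(\breve{\boldsymbol v},\boldsymbol z_h)$ with $(\eps_\gamma\boldsymbol v)^e$, absorbing the $O(h)$ error from \eqref{eq:local-piola-strain}. Integration by parts facewise would then produce an element residual comparable to $\boldsymbol g^e$ tested against $\boldsymbol z_h$ up to $O(h)$, conormal edge terms that cancel, and tangential-jump terms that are $O(h)$ by Lemma~\ref{lem:higher-order-trace-estimates}.
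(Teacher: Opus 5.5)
Your proof of \eqref{eq:paired-kernel-strong-consistency} follows the paper's argument. You insert $\breve{\boldsymbol z}$ and use \eqref{eq:paired-kernel-smooth-consistency} together with $a_\gamma(\boldsymbol v,\boldsymbol z)=\ell(\boldsymbol v;\boldsymbol z)$. You then split the strain pairing into the defect $\eps_h\breve{\boldsymbol v}-\bm A^e$ against the interpolation error, plus the weak term of Lemma~\ref{lem:weak-strain-interpolation} with $\bm A=\eps_\gamma(\boldsymbol v)$, and handle mass and load by the $L^2$ interpolation estimate.

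For \eqref{eq:complete-kernel-surface-consistency}, your primary route is genuinely different; the paper takes what you list as your fallback. It integrates by parts on the flat faces and bounds $(\eps_h\boldsymbol z_h,\bm A^e)_{\Gamma_h}+(\boldsymbol z_h,\divh\bm A^e)_{\Gamma_h}$ by $O(h)$. There the conormal edge terms cancel only up to an $O(h^2)$ pointwise mismatch, since $\bn_K\neq-\bn_L$ and $\bm A^e$ is not tangential to $K$; this is controlled through $|\bP\jump{\bn}|\lesssim h_e^2$. The paper then separately compares $(\boldsymbol z_h,\divh\bm A^e)_{\Gamma_h}$ with $(\widearc{\boldsymbol z_h},\divg\bm A)_\gamma$, and closes with \eqref{eq:local-piola-strain} and \eqref{eq:piola-mass-comparison}.

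Integrating by parts on the curved faces $K^\gamma$ instead has two benefits:
\begin{itemize}
\item The conormal cancellation becomes exact. The conormals of $K^\gamma$ and $L^\gamma$ are exactly opposite, and \eqref{eq:surface-piola-normal-trace} holds for all $q$ with the same edge map from both sides, so it transfers $\jump{\boldsymbol z_h\cdot\bn}=0$ pointwise.
\item The whole bulk geometric error goes into the ready-made \eqref{eq:piola-form-comparison}, so no divergence comparison is needed.
\end{itemize}

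The cost is the tangential-jump comparison, which works as you indicate. Write $\boldsymbol z_K:=\boldsymbol z_h|_K$. Apply the pointwise bound $|\breve\bv_K-\bP_K\bv^e|\lesssim h_K^2|\bv^e|$ facewise to $\bv=\widearc{\boldsymbol z_h}$, so that $\breve\bv_K=\boldsymbol z_K$. Combine this with $\bt_\gamma=(\bP-d\bH)\bt/|(\bP-d\bH)\bt|$. Together these show that the tangential jump of $\widearc{\boldsymbol z_h}$ equals $\jump{\boldsymbol z_h\cdot\bt}$ times a factor $1+O(h^2)$ common to both faces, plus $O(h^2)(|\boldsymbol z_K|+|\boldsymbol z_L|)$. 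Both pieces are $O(h)$ after Lemma~\ref{lem:higher-order-trace-estimates} and scaled traces.

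That is exactly the required order, not more than enough as you say. So both routes depend essentially on the $h_e^{3/2}$ jump bound.

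The paper's flat-face version has the merit that its edge decomposition is the one later sharpened to second order in Lemma~\ref{lem:weak-strain-interpolation}. Yours is shorter and keeps all edge geometry exact.
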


\begin{proof}
Set $\bm A=\eps_\gamma(\boldsymbol v)$.  For
$\boldsymbol z_h\in\Sigmah$, facewise integration by parts,
\eqref{eq:global-velocity-assembly}, the geometry estimates, and the scaled
trace inequality give
\[
 \left|
 (\eps_h\boldsymbol z_h,\bm A^e)_{\Gamma_h}
 +(\boldsymbol z_h,\divh\bm A^e)_{\Gamma_h}
 \right|
 \lesssim h\|\bm A\|_{1,\gamma}\|\boldsymbol z_h\|_{1,h}.
\]
Moreover, \eqref{eq:surface-piola} and \eqref{eq:geometry-estimates} imply
\[
 \left|
 (\boldsymbol z_h,\divh\bm A^e)_{\Gamma_h}
 -(\widearc{\boldsymbol z_h},\divg\bm A)_\gamma
 \right|
 \lesssim h\|\bm A\|_{1,\gamma}\|\boldsymbol z_h\|_{0,\Gamma_h},
\]
where tensor divergence is taken rowwise.  These estimates,
\eqref{eq:local-piola-strain}, and
\eqref{eq:piola-mass-comparison} prove
\eqref{eq:complete-kernel-surface-consistency}.

For \eqref{eq:paired-kernel-strong-consistency}, insert
$\breve{\boldsymbol z}$ and use
$\ell(\boldsymbol v;\boldsymbol z)=a_\gamma(\boldsymbol v,\boldsymbol z)$:
\[
\begin{aligned}
 a_h(\breve{\boldsymbol v},\boldsymbol z_I)
 -\ell(\boldsymbol v;\widearc{\boldsymbol z_I})
 =\big(a_h(\breve{\boldsymbol v},\breve{\boldsymbol z})
       -a_\gamma(\boldsymbol v,\boldsymbol z)\big)
+\big( a_h(\breve{\boldsymbol v},
                    \boldsymbol z_I-\breve{\boldsymbol z})
       -\ell(\boldsymbol v;\widearc{\boldsymbol z_I}-\boldsymbol z)\big).
\end{aligned}
\]
The first difference is bounded by
\eqref{eq:paired-kernel-smooth-consistency}.  For the remaining strain term,
write
\[
\begin{aligned}
 (\eps_h\breve{\boldsymbol v},
     \eps_h(\boldsymbol z_I-\breve{\boldsymbol z}))_{\Gamma_h}
=(\eps_h\breve{\boldsymbol v}-\bm A^e,
          \eps_h(\boldsymbol z_I-\breve{\boldsymbol z}))_{\Gamma_h}
       +(\bm A^e,\eps_h(\boldsymbol z_I-\breve{\boldsymbol z}))_{\Gamma_h}.
\end{aligned}
\]
The first pairing is bounded by \eqref{eq:local-piola-strain} and
\eqref{eq:general-velocity-interpolation-estimate}, and the second by
\eqref{eq:weak-strain-interpolation}.  Both are bounded by
$h^2\|\boldsymbol v\|_{2,\gamma}\|\boldsymbol z\|_{2,\gamma}$.
The mass and load terms satisfy the same bound by the $L^2$ interpolation
estimate and \eqref{eq:surface-piola-norm-equivalence}.  This proves
\eqref{eq:paired-kernel-strong-consistency}.
\end{proof}

\subsection{Broken \texorpdfstring{$H^1$}{H1} and \texorpdfstring{$L^2$}{L2} error estimates}

\begin{theorem}[Broken $H^1$ error]
\label{thm:arbitrary-topology-H1-error}
Let $\bu\in\bm H_t^2(\gamma)$ and $\bu_h\in\mathcal Z_h$ solve
\eqref{eq:continuous-stokes} and \eqref{eq:topology-completed-stokes-method},
respectively.  Suppose that
Assumptions~\ref{ass:mesh} and \ref{ass:physical-data-interface} hold.  Then
\begin{equation}
 \|\breve\bu-\bu_h\|_{1,h}
 \lesssim h\left(
 \|\bu\|_{2,\gamma}+\|\bfv\|_{0,\gamma}\right).
 \label{eq:arbitrary-topology-H1-error}
\end{equation}
\end{theorem}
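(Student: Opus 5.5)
Split the error through the interpolant $\bu_I:=I_\Sigma\bu$, which lies in $\mathcal Z_h$ by Lemma~\ref{lem:commuting-interpolation} since $\divg\bu=0$:
\[
 \breve\bu-\bu_h=(\breve\bu-\bu_I)+(\bu_I-\bu_h)=:\boldsymbol\rho+\boldsymbol\theta_h .
\]
For $\boldsymbol\rho$, Lemma~\ref{prop:general-velocity-interpolation}, summed over $K$ with bounded patch overlap, gives $\|\boldsymbol\rho\|_{1,h}\lesssim h\|\bu\|_{2,\gamma}$. It therefore remains to bound $\boldsymbol\theta_h\in\mathcal Z_h$. We do this with a Strang-type argument based on the coercivity of Theorem~\ref{thm:kernel-H1-stability}.

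By coercivity and the discrete problem \eqref{eq:topology-completed-stokes-method}, we have
\[
 \|\boldsymbol\theta_h\|_{1,h}^2\lesssim a_h^Z(\boldsymbol\theta_h,\boldsymbol\theta_h)
 =a_h^Z(\bu_I,\boldsymbol\theta_h)-\ell_h^Z(\bfv_h;\boldsymbol\theta_h).
\]
Insert the intermediate quantities $a_h(\breve\bu,\boldsymbol\theta_h)$ and $\ell(\bu;\widearc{\boldsymbol\theta_h})$ and write the right-hand side as
\[
 \bigl[a_h^Z(\bu_I,\boldsymbol\theta_h)-a_h(\breve\bu,\boldsymbol\theta_h)\bigr]
 +\bigl[a_h(\breve\bu,\boldsymbol\theta_h)-\ell(\bu;\widearc{\boldsymbol\theta_h})\bigr]
 +\bigl[\ell(\bu;\widearc{\boldsymbol\theta_h})-\ell_h^Z(\bfv_h;\boldsymbol\theta_h)\bigr].
\]
The first bracket is bounded by $h\|\bu\|_{2,\gamma}\|\boldsymbol\theta_h\|_{1,h}$ by \eqref{eq:complete-kernel-vem-consistency}, applied with $\boldsymbol v=\bu\in\mathcal Z(\gamma)\cap\bm H_t^2(\gamma)$. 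The second is bounded by the same quantity by \eqref{eq:complete-kernel-surface-consistency}, since $\boldsymbol\theta_h\in\mathcal Z_h\subset\Sigmah$. The third is bounded by $h\|\bfv\|_{0,\gamma}\|\boldsymbol\theta_h\|_{1,h}$ by \eqref{eq:complete-kernel-load-consistency}.

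The one structural point to check is that the third bracket really measures a consistency error rather than a pressure term. Because $\boldsymbol\theta_h$ is discretely divergence free and $\bm H(\divG;\Gamma_h)$-conforming, its Piola transform $\widearc{\boldsymbol\theta_h}$ is divergence free on $\gamma$ by \eqref{eq:surface-piola-div} and \eqref{eq:surface-piola-normal-trace}. Hence $\ell(\bu;\widearc{\boldsymbol\theta_h})=(\bfv,\widearc{\boldsymbol\theta_h})_\gamma$, and $p$ drops out. This is already used inside the proof of Lemma~\ref{lem:kernel-load-consistency}, so I would simply invoke that lemma.

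Dividing by $\|\boldsymbol\theta_h\|_{1,h}$ yields $\|\boldsymbol\theta_h\|_{1,h}\lesssim h(\|\bu\|_{2,\gamma}+\|\bfv\|_{0,\gamma})$, and the triangle inequality completes the proof. The smallness requirement on $h$ enters only through the coercivity, that is, through the Korn estimate of Theorem~\ref{thm:global-strain-control}.

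All heavy lifting has been done in the consistency lemmas. The main thing to verify is that each lemma is applied to the correct class of test functions: the surface consistency bound must hold for general $\boldsymbol z_h\in\Sigmah$, not only for interpolants. Its first-order version \eqref{eq:complete-kernel-surface-consistency} is stated for that class, so no further obstacle is expected.
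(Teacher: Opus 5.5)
Your proposal is correct and follows the paper's route. It uses coercivity on $\mathcal Z_h$ with $I_\Sigma\bu\in\mathcal Z_h$, splits the residual into VEM, geometric, and load consistency terms bounded by \eqref{eq:complete-kernel-vem-consistency}, \eqref{eq:complete-kernel-surface-consistency}, and \eqref{eq:complete-kernel-load-consistency}, and adds the interpolation estimate for $\breve\bu-I_\Sigma\bu$; the only difference is that you test with $\boldsymbol\theta_h=I_\Sigma\bu-\bu_h$ directly instead of quoting Strang's lemma with a supremum.
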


\begin{proof}
Lemma~\ref{lem:commuting-interpolation} gives
$I_\Sigma\bu\in\mathcal Z_h$.  Strang's lemma and coercivity
\eqref{eq:kernel-H1-stability} give
\[
 \|\breve\bu-\bu_h\|_{1,h}
 \lesssim \|\breve\bu-I_\Sigma\bu\|_{1,h}
 +\sup_{0\ne\boldsymbol z_h\in\mathcal Z_h}
 \frac{|a_h^Z(I_\Sigma\bu,\boldsymbol z_h)
          -\ell_h^Z(\bfv_h;\boldsymbol z_h)|}
      {\|\boldsymbol z_h\|_{1,h}}.
\]
The residual decomposes as
\[
 \begin{aligned}
a_h^Z(I_\Sigma\bu,\boldsymbol z_h)
  -\ell_h^Z(\bfv_h;\boldsymbol z_h)
 &=
 \bigl(a_h^Z(I_\Sigma\bu,\boldsymbol z_h)
       -a_h(\breve\bu,\boldsymbol z_h)\bigr)
 +
 \bigl(a_h(\breve\bu,\boldsymbol z_h)
       -\ell(\bu;\widearc{\boldsymbol z_h})\bigr)\\
 &\qquad+
 \bigl(\ell(\bu;\widearc{\boldsymbol z_h})
       -\ell_h^Z(\bfv_h;\boldsymbol z_h)\bigr).
 \end{aligned}
\]
Equations~\eqref{eq:complete-kernel-vem-consistency},
\eqref{eq:complete-kernel-surface-consistency}, and
\eqref{eq:complete-kernel-load-consistency} control the three terms,
respectively.  Together with
\eqref{eq:general-velocity-interpolation-estimate}, these estimates prove
\eqref{eq:arbitrary-topology-H1-error}.
\end{proof}

\begin{theorem}[$L^2$ error]
\label{thm:arbitrary-topology-lower-order-error}
Let $\bu\in\bm H_t^2(\gamma)$ and $\bu_h\in\mathcal Z_h$ solve
\eqref{eq:continuous-stokes} and \eqref{eq:topology-completed-stokes-method},
respectively.  Suppose that
Assumptions~\ref{ass:mesh} and \ref{ass:physical-data-interface} hold.  Then
\begin{equation}
 \|\breve\bu-\bu_h\|_{0,\Gamma_h}
 \lesssim h^2\left(\|\bu\|_{2,\gamma}+\|\bfv\|_{0,\gamma}\right).
 \label{eq:arbitrary-topology-velocity-L2-error}
\end{equation}
\end{theorem}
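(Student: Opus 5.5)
We will use an Aubin--Nitsche duality argument posed on the divergence-free space. Set $\boldsymbol e_h:=I_\Sigma\bu-\bu_h\in\mathcal Z_h$. Since \eqref{eq:general-velocity-interpolation-estimate} already bounds $\|\breve\bu-I_\Sigma\bu\|_{0,\Gamma_h}$ by $h^2\|\bu\|_{2,\gamma}$, it suffices to show $\|\boldsymbol e_h\|_{0,\Gamma_h}\lesssim h^2(\|\bu\|_{2,\gamma}+\|\bfv\|_{0,\gamma})$.

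\textbf{Dual problem.} Take the tangential datum $\boldsymbol g:=\widearc{\boldsymbol e_h}$. By the Piola norm equivalence, $\|\boldsymbol g\|_{0,\gamma}\simeq\|\boldsymbol e_h\|_{0,\Gamma_h}$. Let $\boldsymbol z\in\mathcal Z(\gamma)$ solve $a_\gamma(\boldsymbol w,\boldsymbol z)=(\boldsymbol g,\boldsymbol w)_\gamma$ for all $\boldsymbol w\in\mathcal Z(\gamma)$. This is the surface Stokes problem with load $\boldsymbol g$, so \eqref{eq:surface-stokes-regularity} gives $\boldsymbol z\in\bm H_t^2(\gamma)$ with $\|\boldsymbol z\|_{2,\gamma}\lesssim\|\boldsymbol e_h\|_{0,\Gamma_h}$. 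Set $\boldsymbol z_I:=I_\Sigma\boldsymbol z\in\mathcal Z_h$ (Lemma~\ref{lem:commuting-interpolation}).

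Because $\widearc{\boldsymbol e_h}$ is divergence free in $\bm H(\divg;\gamma)$, as in the proof of Lemma~\ref{lem:kernel-load-consistency}, the strong form gives $\|\boldsymbol g\|_{0,\gamma}^2=(\boldsymbol g,\widearc{\boldsymbol e_h})_\gamma=\ell(\boldsymbol z;\widearc{\boldsymbol e_h})$, where the dual pressure gradient drops out by integration by parts. Then \eqref{eq:piola-mass-comparison} converts $\|\boldsymbol e_h\|_{0,\Gamma_h}^2$ into $\ell(\boldsymbol z;\widearc{\boldsymbol e_h})$ up to $h^2\|\boldsymbol e_h\|_0^2$, which is absorbed.

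\textbf{Splitting.} Write
\[
\ell(\boldsymbol z;\widearc{\boldsymbol e_h})=\bigl[\ell(\boldsymbol z;\widearc{\boldsymbol e_h})-a_h(\breve{\boldsymbol z},\boldsymbol e_h)\bigr]+a_h(\breve{\boldsymbol z}-\boldsymbol z_I,\boldsymbol e_h)+a_h(\boldsymbol z_I,\boldsymbol e_h),
\]
using the symmetry of $a_h$.

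\emph{First two terms.} By \eqref{eq:complete-kernel-surface-consistency} with the roles of primal and dual swapped, and by \eqref{eq:general-velocity-interpolation-estimate}, both are $\lesssim h\|\boldsymbol z\|_{2,\gamma}\|\boldsymbol e_h\|_{1,h}$. Theorem~\ref{thm:arbitrary-topology-H1-error} gives $\|\boldsymbol e_h\|_{1,h}\lesssim h(\|\bu\|_2+\|\bfv\|_0)$, so these terms are $O(h^2)$.

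\emph{Third term.} Replace $a_h$ by $a_h^Z$. For $a_h^Z(\boldsymbol z_I,\cdot)-a_h(\boldsymbol z_I,\cdot)$, reuse the $I_1,I_2,I_3$ bounds of Lemma~\ref{lem:complete-kernel-consistency}: each is $\lesssim h\|\boldsymbol z\|_2\|\boldsymbol e_h\|_{1,h}$, so this replacement costs $O(h^2)$. Then use symmetry and Galerkin orthogonality:
\[
a_h^Z(\boldsymbol e_h,\boldsymbol z_I)=a_h^Z(I_\Sigma\bu,\boldsymbol z_I)-\ell_h^Z(\bfv_h;\boldsymbol z_I).
\]
Decompose this residual exactly as in the $H^1$ proof, but now tested with $\boldsymbol z_I$:
\[
\bigl[a_h^Z(\bu_I,\boldsymbol z_I)-a_h(\bu_I,\boldsymbol z_I)\bigr]+a_h(\bu_I-\breve\bu,\boldsymbol z_I)+\bigl[a_h(\breve\bu,\boldsymbol z_I)-\ell(\bu;\widearc{\boldsymbol z_I})\bigr]+\bigl[\ell(\bu;\widearc{\boldsymbol z_I})-\ell_h^Z(\bfv_h;\boldsymbol z_I)\bigr].
\]
The first, third and fourth brackets are $O(h^2\|\bu\|_2\|\boldsymbol z\|_2)$ or $O(h^2\|\bfv\|_0\|\boldsymbol z\|_2)$ by \eqref{eq:paired-kernel-vem-consistency}, \eqref{eq:paired-kernel-strong-consistency} and \eqref{eq:paired-kernel-load-consistency}.

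\textbf{Main obstacle.} The middle term $a_h(\bu_I-\breve\bu,\boldsymbol z_I)$ is where the difficulty lies. I would write it as $a_h(\bu_I-\breve\bu,\breve{\boldsymbol z})+a_h(\bu_I-\breve\bu,\boldsymbol z_I-\breve{\boldsymbol z})$. The second piece is $O(h^2)$ by interpolation. In the first piece, replace $\eps_h\breve{\boldsymbol z}$ by $\eps_\gamma(\boldsymbol z)^e$ at cost $O(h)\cdot O(h)$ via \eqref{eq:local-piola-strain}. Then apply the weak estimate \eqref{eq:weak-strain-interpolation} with $\bm A=\eps_\gamma(\boldsymbol z)$, with $\bu$ as the interpolated field; the mass part is $O(h^2)$ by the $L^2$ interpolation estimate. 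This uses the zero-mean tangential jumps, and it is the step whose hypotheses I would check most carefully, namely that $\bm A\in H^1$ has norm $\lesssim\|\boldsymbol z\|_2$.

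Collecting the bounds gives $\|\boldsymbol e_h\|_0^2\lesssim h^2(\|\bu\|_2+\|\bfv\|_0)\|\boldsymbol e_h\|_0$. Dividing by $\|\boldsymbol e_h\|_0$ and applying the triangle inequality concludes \eqref{eq:arbitrary-topology-velocity-L2-error}.
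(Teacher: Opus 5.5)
Your proposal is correct and follows essentially the paper's argument: a duality argument on the divergence-free space, in which the first-order term $h\|\boldsymbol z\|_{2,\gamma}\|\boldsymbol e_h\|_{1,h}$ is closed by Theorem~\ref{thm:arbitrary-topology-H1-error}, and the remaining terms are handled by \eqref{eq:paired-kernel-vem-consistency}, \eqref{eq:paired-kernel-strong-consistency} and \eqref{eq:paired-kernel-load-consistency}. The differences are only bookkeeping. The paper uses the full error $\bu-\widearc{\bu_h}$ as dual datum, and it treats your term $a_h(\bu_I-\breve\bu,\breve{\boldsymbol z})$ through \eqref{eq:paired-kernel-strong-consistency} with the roles of $\bu$ and $\boldsymbol z$ swapped; that estimate rests on the same weak interpolation bound \eqref{eq:weak-strain-interpolation} with $\bm A=\eps_\gamma(\boldsymbol z)$ that you apply directly.
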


\begin{proof}
Set $\boldsymbol e:=\bu-\widearc{\bu_h}$.  Equations
\eqref{eq:surface-piola-div}--\eqref{eq:surface-piola-normal-trace} give
$\boldsymbol e\in\bm H(\divg;\gamma)$,
$\breve{\boldsymbol e}=\breve\bu-\bu_h$, and
$\divg\boldsymbol e=0$.
Let
$(\boldsymbol z,p^\ast)\in\mathcal Z(\gamma)\times\mathring H^1(\gamma)$
satisfy
\[
 -\bP\divg\eps_\gamma(\boldsymbol z)+\boldsymbol z+\gradg p^\ast
 =\boldsymbol e.
\]
The regularity estimate \eqref{eq:surface-stokes-regularity} gives
$
 \|\boldsymbol z\|_{2,\gamma}
 \lesssim\|\boldsymbol e\|_{0,\gamma}.$

Since $\divg\boldsymbol e=0$, testing the equation with
$\boldsymbol e$ eliminates the pressure term.  With
$\boldsymbol z_I:=I_\Sigma\boldsymbol z\in\mathcal Z_h$, it holds that
\begin{align*}
 \|\boldsymbol e\|_{0,\gamma}^2 = \ell(\boldsymbol z;\boldsymbol e)
 ={}&\bigl(
 \ell(\boldsymbol z;\boldsymbol e)
 -a_h(\boldsymbol z_I,\breve{\boldsymbol e})
 \bigr)
 +\bigl(
 a_h(\breve\bu,\boldsymbol z_I)
 -\ell(\bu;\widearc{\boldsymbol z_I})
 \bigr)\\
 &+\bigl(
 \ell(\bu;\widearc{\boldsymbol z_I})
 -\ell_h^Z(\bfv_h;\boldsymbol z_I)
 \bigr)
 +\bigl(
 a_h^Z(\boldsymbol z_I,\bu_h)
 -a_h(\boldsymbol z_I,\bu_h)
 \bigr).
\end{align*}
Denote the four terms by $I_1,I_2,I_3,I_4$ in order. \eqref{eq:paired-kernel-strong-consistency} and
\eqref{eq:paired-kernel-load-consistency} give
\[
 |I_2|+|I_3|
 \lesssim h^2\left(\|\bu\|_{2,\gamma}+\|\bfv\|_{0,\gamma}\right)
                  \|\boldsymbol z\|_{2,\gamma}.
\]
Let $\bu_I = I_\Sigma\bu$ and $\boldsymbol e_h:=\bu_I-\bu_h\in\mathcal Z_h$. By VEM consistency error estimates 
\eqref{eq:paired-kernel-vem-consistency}, \eqref{eq:complete-kernel-vem-consistency}, and the interpolation estimate,
it holds that
\[
\begin{aligned}
|I_4|
 &=\left|
 a_h^Z(\boldsymbol z_I,\bu_I)
 -a_h(\boldsymbol z_I,\bu_I)-
 \bigl(a_h^Z(\boldsymbol z_I,\boldsymbol e_h)
       -a_h(\breve{\boldsymbol z},\boldsymbol e_h)\bigr)
 -
 a_h(\breve{\boldsymbol z}-\boldsymbol z_I,\boldsymbol e_h)\right|\\
 &\lesssim h\|\boldsymbol z\|_{2,\gamma} \left(\|\boldsymbol e_h\|_{1,h}
        +h\|\bu\|_{2,\gamma} \right) .
\end{aligned}
\]

Since $\ell(\boldsymbol z;\bu)=a_\gamma(\boldsymbol z,\bu)$, we have 
\[
\begin{aligned}
I_1
 &=\ell(\boldsymbol z;\widearc{\boldsymbol e_h})+ \ell(\boldsymbol z;\bu)-\ell(\boldsymbol z;\widearc{\bu_I})+a_h(\breve{\boldsymbol z}-\boldsymbol z_I,
        \breve{\boldsymbol e})-a_h(\breve{\boldsymbol z},\breve{\bu})+a_h(\breve{\boldsymbol z},\bu_h)\\
 &=\bigl(\ell(\boldsymbol z;\widearc{\boldsymbol e_h})
       -a_h(\breve{\boldsymbol z},\boldsymbol e_h)\bigr)
 +
 \bigl(a_\gamma(\boldsymbol z,\bu)
       -a_h(\breve{\boldsymbol z},\breve\bu)\bigr)\\
       &\qquad -
 \bigl(\ell(\boldsymbol z;\widearc{\bu_I})
       -a_h(\breve{\boldsymbol z},\bu_I)\bigr)
 +
 a_h(\breve{\boldsymbol z}-\boldsymbol z_I,
       \breve{\boldsymbol e}).
\end{aligned}
\]
Consistency error estimates ~\eqref{eq:complete-kernel-surface-consistency},
\eqref{eq:paired-kernel-smooth-consistency}, and
\eqref{eq:paired-kernel-strong-consistency}, followed by the interpolation
estimate in the last term, yield
\[
 |I_1|
 \lesssim h\|\boldsymbol z\|_{2,\gamma}
                \|{\boldsymbol e}_h\|_{1,h}
 +h^2\|\boldsymbol z\|_{2,\gamma}\|\bu\|_{2,\gamma}.
\]

Combining the estimates for the four terms with $
 \|\boldsymbol z\|_{2,\gamma}
 \lesssim\|\boldsymbol e\|_{0,\gamma}$ yields
\[
 \|\boldsymbol e\|_{0,\gamma}
 \lesssim h\|\boldsymbol e_h\|_{1,h}
 +h^2\left(\|\bu\|_{2,\gamma}+\|\bfv\|_{0,\gamma}\right).
\]
Theorem~\ref{thm:arbitrary-topology-H1-error}, the interpolation estimate,
and \eqref{eq:surface-piola-norm-equivalence} prove
\eqref{eq:arbitrary-topology-velocity-L2-error}.
\end{proof}

\section{Numerical experiments}\label{sec:numerical}
For the convergence and pressure-robustness experiments, we solve the surface
Stokes problem~\eqref{eq:continuous-stokes} on polygonal surface meshes
satisfying Assumption~\ref{ass:mesh} by
the method~\eqref{eq:topology-completed-stokes-method}.  For the four convergence
examples, we prescribe the exact velocity, compute the force from
\eqref{eq:continuous-stokes} with zero pressure, and set
$\bfv_h=\bP_h\bfv^e$.  For the pressure-robustness tests, we fix the mesh and
use $\bfv_h^\beta=\bfv_h^0+\beta\gradh e^{z/2}$, to examine the change in the numerical
solution and its errors as the gradient force is varied.  All mesh families
used in the convergence tests are refined uniformly with new vertices placed on the exact surface. The construction of the
initial mesh is specified for each example.

We report the discrete energy error and the $L^2$ error of the piecewise
affine $H^1$ projection of the velocity:
\[
 E_{a,h}:=a_h^Z(I_\Sigma\bu-\bu_h,
                         I_\Sigma\bu-\bu_h)^{1/2},\qquad
 E_{0,h}:=\big(\sum_{K\in\Th}
 \|\breve\bu_K-\Pi_1^{\nabla,K}\bu_h\|_{0,K}^2
 \big)^{1/2}.
\]
Here $\Pi_1^{\nabla,K}$ projects onto affine tangential vector fields on $K$,
preserving the element averages of the velocity and its gradient.
The field $\breve\bu_K$ is the exact Piola pullback on $K$, and
$h=\max_{K\in\Th}\operatorname{diam}(K)$.  For an error $E_h$, the order
between two consecutive meshes of sizes $H$ and $h$ is
$\log(E_H/E_h)/\log(H/h)$.  The tables report the coordinate count
$3\#\Vh+2-\chi(\Gamma_h)$ under Dof.  This count includes one redundant
constant coordinate inherited from the auxiliary scalar space; the number
of independent velocity unknowns is one less.

\paragraph{Necessity of shared edge means.}
We compare the methods based on $\Sigmah$ and $\widetilde\Sigma_h$, defined in
\eqref{eq:global-velocity-space} and
\eqref{eq:uncorrected-global-velocity-space}, respectively.
The two spaces have the same independent velocity degrees of freedom, while
$\Sigmah$ additionally has zero-mean tangential velocity jumps by
Lemma~\ref{lem:global-domain-conformity}.
Both methods use the same triangular meshes,
vertex assembly, affine velocity projection, and divergence-preserving
load reconstruction.
On a torus with major radius $R$ and minor radius $r$, set
$\varrho:=R+r\cos\theta$ in toroidal coordinates $(\theta,\varphi)$.
Two harmonic fields are
\begin{equation}\label{eq:torus-harmonic-fields}
 \boldsymbol h_\theta
   :=\frac{1}{\varrho}
      (-\sin\theta\cos\varphi,-\sin\theta\sin\varphi,\cos\theta)^T,\qquad 
 \boldsymbol h_\varphi
   :=\frac{1}{\varrho}(-\sin\varphi,\cos\varphi,0)^T.
\end{equation}
For this comparison, we take $R=2$ and $r=0.7$ and prescribe
$\bu=(R+r\cos\theta)\sin\theta\,\boldsymbol h_\varphi$ and $p=0$.
Table~\ref{tab:svem-edge-correction} shows approximately first-order
convergence in $E_{a,h}$ for both spaces, while $E_{0,h}$ converges at
second order for $\Sigmah$ and first order for $\widetilde\Sigma_h$,
consistent with Remark~\ref{rem:uncorrected-surface-consistency}.
\begin{table}[H]
    \centering
    \caption{Comparison of the velocity spaces on the torus.}
    \label{tab:svem-edge-correction}
    \begin{subtable}[t]{0.45\textwidth}
        \centering
        \footnotesize
        \setlength{\tabcolsep}{2.2pt}
        \caption{$\Sigmah$.}
        \label{tab:svem-edge-corrected}
        \begin{tabular*}{\linewidth}{@{\extracolsep{\fill}}lcccc@{}}
            \toprule
            Dof & $E_{a,h}$ & order & $E_{0,h}$ & order \\
            \midrule
            $384+2$ & 1.171e+01 &      & 2.009e+00 &      \\
            $1536+2$ & 5.205e+00 & 1.22 & 4.039e-01 & 2.42 \\
            $6144+2$ & 2.482e+00 & 1.08 & 9.034e-02 & 2.19 \\
            $24576+2$ & 1.224e+00 & 1.02 & 2.188e-02 & 2.05 \\
            \bottomrule
        \end{tabular*}
    \end{subtable}\hspace{0.07\textwidth}%
    \begin{subtable}[t]{0.45\textwidth}
        \centering
        \footnotesize
        \setlength{\tabcolsep}{2.2pt}
        \caption{$\widetilde\Sigma_h$.}
        \label{tab:svem-edge-uncorrected}
        \begin{tabular*}{\linewidth}{@{\extracolsep{\fill}}lcccc@{}}
            \toprule
            Dof & $E_{a,h}$ & order & $E_{0,h}$ & order \\
            \midrule
            $384+2$ & 1.122e+01 &      & 1.529e+00 &      \\
            $1536+2$ & 5.004e+00 & 1.22 & 6.242e-01 & 1.35 \\
            $6144+2$ & 2.386e+00 & 1.08 & 2.957e-01 & 1.09 \\
            $24576+2$ & 1.177e+00 & 1.02 & 1.458e-01 & 1.02 \\
            \bottomrule
        \end{tabular*}
    \end{subtable}
\end{table}

\paragraph{Convergence.}
We consider the following four examples.

\noindent\underline{\textit{Simply connected surface.}}
The first example is posed on the surface
\[
 (x-z^2)^2+y^2+z^2=1.
\]
A parametrization is
$\boldsymbol X(z,\theta)=\bigl(z^2+\sqrt{1-z^2}\cos\theta,
\sqrt{1-z^2}\sin\theta,z\bigr)$.  In the initial mesh, the band
$|z|\le 1/\sqrt2$ is partitioned into quadrilaterals, while the two caps are
triangulated.  The auxiliary scalar field and the prescribed velocity are
$\phi=y$ and $\bu=\curlg y$, respectively.

\noindent\underline{\textit{Torus.}}
The second example uses the torus with $R=1$ and $r=0.6$ and the harmonic
fields in~\eqref{eq:torus-harmonic-fields}.
The initial mesh is induced by a rectangular partition of the
$(\theta,\varphi)$-parameter domain.  With
$\phi(\theta,\varphi):=\sin(3\varphi)\cos(3\theta+\varphi)$, the exact velocity
is $\bu=\curlg\phi+\boldsymbol h_\theta+\boldsymbol h_\varphi$.

\noindent\underline{\textit{Genus-two surface.}}
The third example is obtained from two tori with common major radius
$2$, centres $\boldsymbol c_1=(-2,0,0)$ and $\boldsymbol c_2=(2,0,0)$, axes
$\boldsymbol a_1=(0,0,1)$ and
$\boldsymbol a_2=(0,\sin75^\circ,\cos75^\circ)$, and minor radii
$(R_1^{\mathrm{min}},R_2^{\mathrm{min}})=(1.4,1)$.
For $i=1,2$, let $t_i$ denote the
squared distance to the $i$th centre circle, normalized by
$(R_i^{\mathrm{min}})^2$; thus $t_i=1$ defines the $i$th torus.  The 
surface is 
\[
(3-2t_1)_+^5+(3-2t_2)_+^5=1,
\]
where $a_+:=\max\{a,0\}$.  The surface is of class $C^4$.
If $t_i\ge1.5$ at a point of the
surface, then $t_j=1$ for $j\ne i$, hence the $j$th torus is recovered away from the overlap.

The initial mesh consists of triangles in the overlap region
$t_1,t_2\in(1,1.5)$ and quadrangles on the remaining parts.
With $\bnu$ denoting the outward unit normal, the exact velocity is
\[
 \bu=\bnu\times\left((y,x,0)
       +\frac{(-y,x+2,0)}{2((x+2)^2+y^2)}\right)^{\!T}.
\]

\noindent\underline{\textit{Genus-five surface.}}
The fourth example is posed on
\[
 \gamma=\{(x,y,z):x^4+y^4+z^4-(x^2+y^2+z^2)+0.4=0\}.
\]
This geometry is taken from the FELICITY package
\cite{walker2018felicity,walker2022kirchhoff}.  The initial triangular mesh has
4798 triangles, with local refinement and vertex redistribution to improve
element shapes and reduce the angles between adjacent face normals; all
vertices are projected onto the exact surface.  The exact velocity is
\[
 \bu=\bnu\times
 \left(-\frac{y}{x^2+y^2},\frac{x}{x^2+y^2},0\right)^{\!T}.
\]

Tables~\ref{tab:svem-convergence-general-torus} and~\ref{tab:svem-convergence-higher-genus}
show approximately first-order convergence in
\(E_{a,h}\) and second-order convergence in \(E_{0,h}\), consistent with
Theorems~\ref{thm:arbitrary-topology-H1-error}
and~\ref{thm:arbitrary-topology-lower-order-error}.

\begin{figure}[H]
    \centering
    \includegraphics[width=0.32\linewidth]{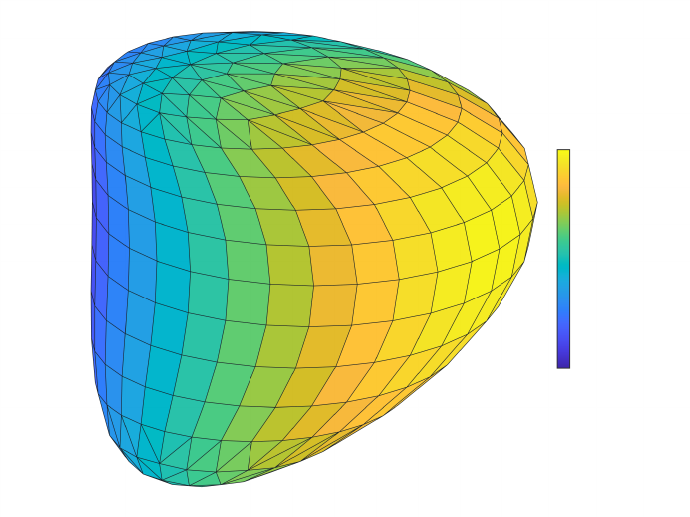}\hfill
    \includegraphics[width=0.32\linewidth]{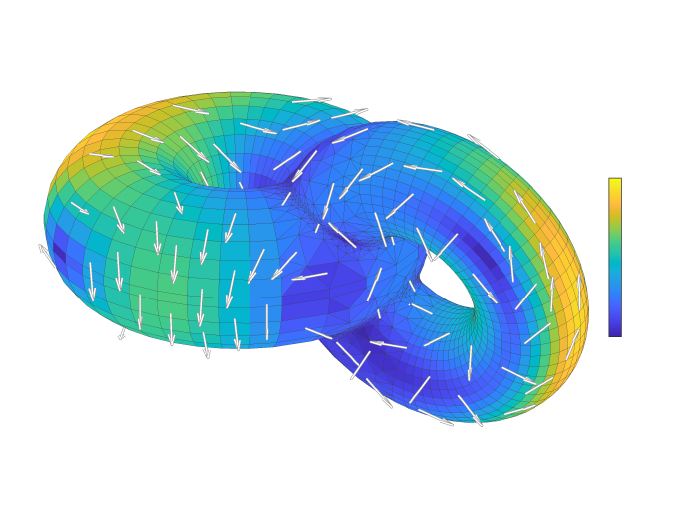}\hfill
    \includegraphics[width=0.32\linewidth]{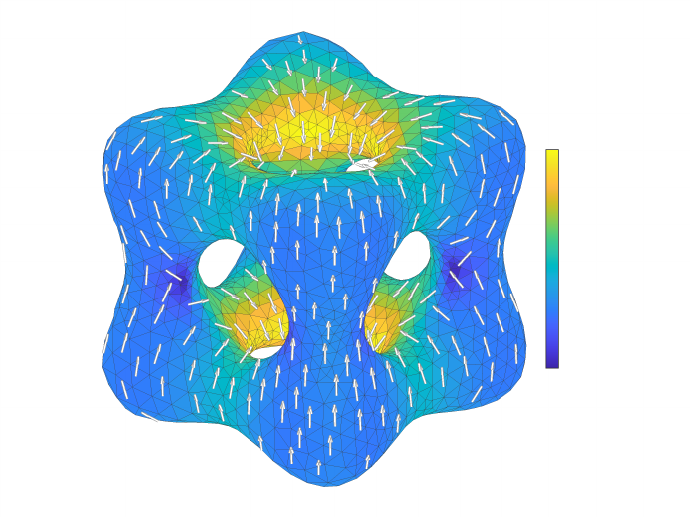}
    \caption{Auxiliary scalar field $\phi_h$ associated with the discrete
    complex on the simply connected surface (left), and projected velocity $\Pi_1^{\nabla,K} \bu_h$ on the genus-two (middle) and genus-five (right)
    surfaces.  In the velocity panels, colors indicate magnitude and arrows
    indicate direction.}
    \label{fig:svem-computed-fields}
    \label{fig:svem-general-scalar}
\end{figure}

\begin{figure}[H]
    \centering
    \includegraphics[width=0.32\linewidth]{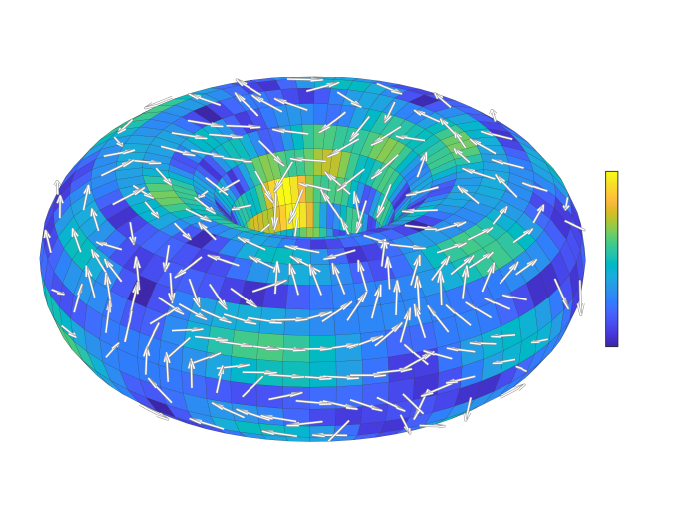}\hfill
    \includegraphics[width=0.32\linewidth]{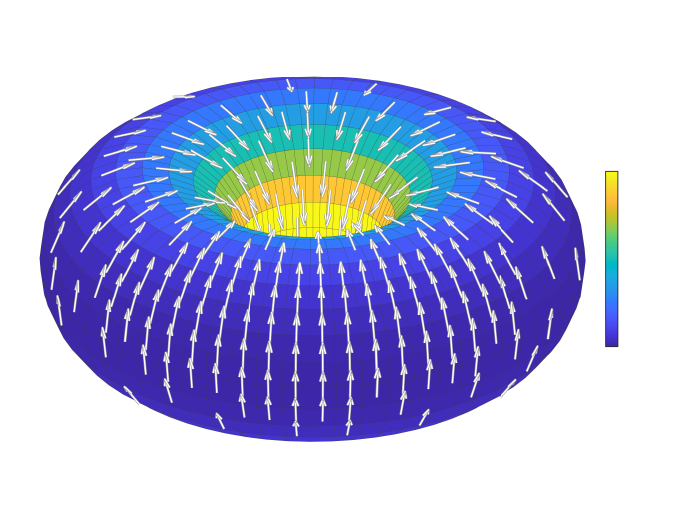}\hfill
    \includegraphics[width=0.32\linewidth]{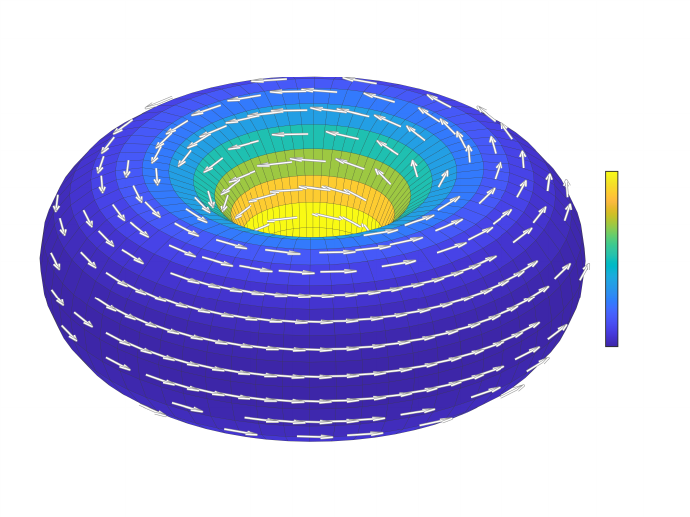}
    \caption{Projected velocity $\Pi_1^{\nabla,K} \bu_h$ on the torus
    (left), and reconstructed discrete harmonic fields corresponding to
    $\boldsymbol h_\theta$ (middle) and $\boldsymbol h_\varphi$ (right).  In
    each panel, color indicates magnitude and arrows indicate direction.}
    \label{fig:svem-torus-morley}
\end{figure}

\begin{table}[H]
    \centering
    \caption{Numerical errors and convergence rates for the simply connected
    and torus examples.}
    \label{tab:svem-convergence-general-torus}
    \begin{subtable}[t]{0.45\textwidth}
        \centering
        \footnotesize
        \setlength{\tabcolsep}{2.2pt}
        \caption{Simply connected surface.}
        \label{tab:svem-general}
        \begin{tabular*}{\linewidth}{@{\extracolsep{\fill}}lcccc@{}}
            \toprule
            Dof & $E_{a,h}$ & order & $E_{0,h}$ & order \\
            \midrule
              294 & 2.70495e+00 &      & 4.30093e-01 &      \\
             1158 & 1.34087e+00 & 1.21 & 1.19201e-01 & 2.21 \\
             4614 & 6.68011e-01 & 1.10 & 3.04095e-02 & 2.16 \\
            18438 & 3.40615e-01 & 1.02 & 7.97585e-03 & 2.02 \\
            \bottomrule
        \end{tabular*}
    \end{subtable}\hspace{0.07\textwidth}%
    \begin{subtable}[t]{0.45\textwidth}
        \centering
        \footnotesize
        \setlength{\tabcolsep}{2.2pt}
        \caption{Torus.}
        \label{tab:svem-torus}
        \begin{tabular*}{\linewidth}{@{\extracolsep{\fill}}lcccc@{}}
            \toprule
            Dof & $E_{a,h}$ & order & $E_{0,h}$ & order \\
            \midrule
            $288+2$ & 4.445e+01 &      & 1.075e+01 &      \\
            $1152+2$ & 3.498e+01 & 0.38 & 3.294e+00 & 1.86 \\
            $4608+2$ & 2.169e+01 & 0.70 & 7.968e-01 & 2.09 \\
            $18432+2$ & 1.136e+01 & 0.94 & 1.999e-01 & 2.01 \\
            \bottomrule
        \end{tabular*}
    \end{subtable}
\end{table}

\begin{table}[H]
    \centering
    \caption{Numerical errors and convergence rates for the genus-two and
    genus-five examples.}
    \label{tab:svem-convergence-higher-genus}
    \begin{subtable}[t]{0.45\textwidth}
        \centering
        \footnotesize
        \setlength{\tabcolsep}{2.2pt}
        \caption{Genus-two surface.}
        \label{tab:svem-genus2}
        \begin{tabular*}{\linewidth}{@{\extracolsep{\fill}}lcccc@{}}
            \toprule
            Dof & $E_{a,h}$ & order & $E_{0,h}$ & order \\
            \midrule
            $8292+4$ & 5.374e+00 &      & 4.418e-01 &      \\
            $33186+4$ & 2.874e+00 & 0.91 & 1.134e-01 & 1.98 \\
            $132762+4$ & 1.510e+00 & 0.93 & 2.858e-02 & 1.99 \\
            $531066+4$ & 7.766e-01 & 0.96 & 7.258e-03 & 1.98 \\
            \bottomrule
        \end{tabular*}
    \end{subtable}\hspace{0.07\textwidth}%
    \begin{subtable}[t]{0.45\textwidth}
        \centering
        \footnotesize
        \setlength{\tabcolsep}{2.0pt}
        \caption{Genus-five surface.}
        \label{tab:svem-genus-five}
        \begin{tabular*}{\linewidth}{@{\extracolsep{\fill}}lcccc@{}}
            \toprule
            Dof & $E_{a,h}$ & order & $E_{0,h}$ & order \\
            \midrule
            $7173+10$ & 4.027e+00 &      & 2.337e-01 &      \\
            $28764+10$ & 2.188e+00 & 0.97 & 6.047e-02 & 2.16 \\
            $115128+10$ & 1.148e+00 & 0.96 & 1.602e-02 & 1.97 \\
            $460584+10$ & 5.900e-01 & 0.97 & 4.171e-03 & 1.96 \\
            \bottomrule
        \end{tabular*}
    \end{subtable}
\end{table}

\paragraph{Pressure robustness.}
For the pressure tests on the torus and genus-two surface, let
$\bu_h^\beta$ be the solution corresponding to the datum $\bfv_h^\beta$.
The discrete surface and discrete operator are kept fixed.
In addition to $E_{a,h}$ and $E_{0,h}$, we report
\[
 \delta_{u,h}^\beta
 :=\frac{a_h^Z(\bu_h^\beta-\bu_h^0,
                    \bu_h^\beta-\bu_h^0)^{1/2}}
          {a_h^Z(\bu_h^0,\bu_h^0)^{1/2}}.
\]

In Table~\ref{tab:svem-pressure-robustness}, both errors are unchanged at the
reported precision, and the largest relative velocity change is
$4.09\times10^{-12}$.  These results agree with the exact discrete
gradient invariance in Lemma~\ref{prop:closed-space-companion}.

\begin{table}[H]
    \centering
    \small
    \caption{Pressure robustness on the level-zero meshes.}
    \label{tab:svem-pressure-robustness}
    \begin{tabular}{@{}rccc@{\qquad}ccc@{}}
        \toprule
        & \multicolumn{3}{c}{Torus} & \multicolumn{3}{c}{Genus two} \\
        \cmidrule(lr){2-4}\cmidrule(l){5-7}
        $\beta$ & $E_{a,h}$ & $E_{0,h}$ & $\delta_{u,h}^\beta$
                & $E_{a,h}$ & $E_{0,h}$ & $\delta_{u,h}^\beta$ \\
        \midrule
        $0$ & 4.4451e+01 & 1.0746e+01 & 0 & 5.3735e+00 & 4.4176e-01 & 0 \\
        $10^2$ & 4.4451e+01 & 1.0746e+01 & 3.06e-15 & 5.3735e+00 & 4.4176e-01 & 6.47e-14 \\
        $10^4$ & 4.4451e+01 & 1.0746e+01 & 2.76e-13 & 5.3735e+00 & 4.4176e-01 & 4.09e-12 \\
        \bottomrule
    \end{tabular}
\end{table}

\paragraph{Macro--virtual correspondence.}
We test the equality in Theorem~\ref{thm:hct-variational-correspondence} by
solving the direct macroelement problem \eqref{eq:hct-direct-method} and the
macro-induced virtual problem \eqref{eq:hct-virtual-method} on the coarsest
mesh of each of the four surfaces above. Each polygon is split into a fan
and then into HCT subtriangles as in Subsection~\ref{subsec:hct-complex};
the resulting macro mesh of the torus is shown in
Figure~\ref{fig:macro-surface-mesh}. Both systems use the same directly
integrated macro load, scalar gauge, and topology coordinates. We compare the full velocities and the paired stiffness matrices through
\[
 \delta_u:=\frac{\|\bu_h^M-E_h^\Sigma\bu_h^E\|_{0,\Gamma_h}}
                  {\|\bu_h^M\|_{0,\Gamma_h}},
 \qquad
 \delta_A:=\frac{\|A^M-A^E\|_F}{\|A^M\|_F}.
\]
Table~\ref{tab:macro-vem-equivalence} reports the relative differences. The relative matrix differences are below $7\times10^{-16}$, and the full velocity differences are at most $3.02\times10^{-11}$, confirming the equivalence of the two formulations.

\begin{figure}[H]
 \centering
 \begin{subfigure}[t]{.40\linewidth}
  \centering
  \includegraphics[width=\linewidth]{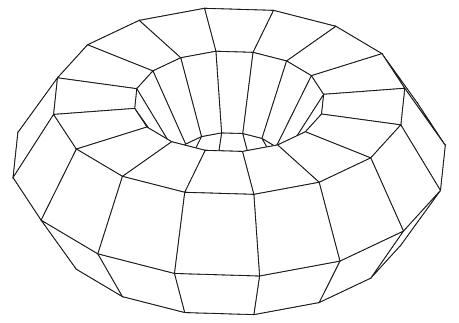}
  \caption{Polygonal mesh}
 \end{subfigure}\hfill
 \begin{subfigure}[t]{.40\linewidth}
  \centering
  \includegraphics[width=\linewidth]{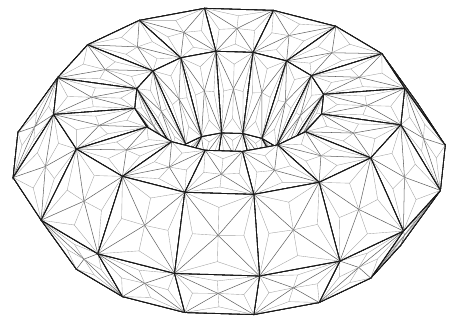}
  \caption{HCT refinement}
 \end{subfigure}
 \caption{Coarsest torus mesh: 96 polygonal faces and 1152 HCT
 subtriangles.}
 \label{fig:macro-surface-mesh}
\end{figure}

\begin{table}[H]
 \centering
 \small
 \caption{Comparison of the direct macroelement and macro-induced virtual formulations on the four examples.}
 \label{tab:macro-vem-equivalence}
 \begin{tabular}{@{}lrrr@{}}
  \toprule
  Surface & Dof & $\delta_u$ & $\delta_A$ \\
  \midrule
  Simply connected & 294 & 6.15e-14 & 1.67e-16 \\
  Torus & $288+2$ & 1.86e-14 & 6.32e-16 \\
  Genus two & $8292+4$ & 1.22e-12 & 1.81e-16 \\
  Genus five & $7173+10$ & 3.02e-11 & 1.62e-16 \\
  \bottomrule
 \end{tabular}
\end{table}

\section{Conclusions and discussion}\label{sec:conclusions}
We have developed a pressure-free VEM for the Stokes problem on closed
surfaces.  Its nonconforming Stokes complex decomposes the divergence-free
velocity space into a discrete curl space and a topological complement,
preserving the surface cohomology dimension.  Vertex-determined edge
moments provide additional weak continuity without new independent
unknowns.  The method requires no normal or jump penalties, and a local
divergence-preserving reconstruction makes the load computable and
pressure robust on the fixed polygonal surface.  We have proved stability
and optimal first-order broken $H^1$ and second-order $L^2$ velocity
convergence.

We have further shown that the virtual complex admits an explicit reduced HCT
macroelement representative. The local maps preserve the boundary data and
commute with curl and divergence, while the assembled correspondence preserves
the entire divergence-free space, including its topological components.
Pulling back the direct macroelement Stokes energy and load therefore yields an
alternative virtual formulation that is algebraically identical, in the common
degrees of freedom, to the reduced macroelement Galerkin method. Since the
macroelement functions are explicit piecewise polynomials, this formulation is
directly computable and requires neither polynomial projection nor
stabilization.

More generally, extending virtual element constructions to higher-order curved
surfaces faces a fundamental computability difficulty. Whether an element lies
on the exact surface or on a high-order curved approximation, the metric and
tangent projection vary within the element, so the pulled-back variational
forms generally contain nonpolynomial geometric weights. Since virtual
functions are known only through their degrees of freedom, standard polynomial
moments do not in general determine the resulting weighted quantities. The
usual VEM projection argument may therefore no longer suffice to assemble the
local forms. 

The equivalence established here suggests a way around this difficulty.
An explicit piecewise-polynomial macroelement representative can be evaluated
and differentiated at quadrature points, making geometry-weighted integrals
directly accessible even when the corresponding virtual function is not known
in the element interior. 
This suggests a possible route toward higher-order virtual elements on curved surfaces, with the corresponding stability and convergence analysis left for future study.

\appendix

\section{Proof of Lemma \ref{lem:piola-geometric-comparison}}
\label{app:piola-geometric-estimate}

\begin{proof}[Proof of Lemma~\ref{lem:piola-geometric-comparison}]
Estimate \eqref{eq:pointwise-piola-strain} is
\cite[Lemma~2.2]{demlow2024tangential}, applied to
$\breve{\boldsymbol v}$, and \eqref{eq:local-piola-strain} follows from
\eqref{eq:surface-piola-norm-equivalence}.
Estimates \eqref{eq:piola-form-comparison} and
\eqref{eq:piola-mass-comparison} follow similarly from
\cite[Lemma~4.2 and its proof, (4.8)]{demlow2024tangential}.

For the remaining estimate, let $\boldsymbol v\in\bm H_t^2(\gamma)$ and
let $\bm A\in H^1(\gamma;\mathbb R^{3\times3})$ be symmetric and tangential.
With $\boldsymbol\chi:=-\bnu\times\boldsymbol v$, the cofactor form of
\eqref{eq:surface-piola} is
$\breve{\boldsymbol v}_K=J_K(\bP-d\bH)\boldsymbol\chi^e$.
Differentiating and retaining the term from $\nabla_Kd=\bP_K\bnu$, set
\[
 F(y,\bm m):=
 \bigl[\bm m\times(\nabla\boldsymbol\chi^e\bP_{\bm m})
  -(\bm m\times\bH\boldsymbol\chi)\otimes(\bP_{\bm m}\bnu)\bigr]
  :\bm A.
\]
Here $\bm m\in\mathbb R^3$, all coefficients are evaluated at $y\in\gamma$,
$\bP_{\bm m}=\bI-\bm m\otimes\bm m$, and the matrix cross product acts
columnwise. The chain rule and the geometry bounds give
\[
 \eps_h\breve{\boldsymbol v}:\bm A^e
 =F(\bp(\cdot),\bnu_h)+r_h,\qquad
 \|r_h\|_{L^1(\Gamma_h)}
 \lesssim h^2\|\boldsymbol v\|_{1,\gamma}\|\bm A\|_{0,\gamma}.
\]

Tangentiality gives $F(\cdot,\bnu)=\eps_\gamma(\boldsymbol v):\bm A$.
Polynomial differentiation and the product rule bound the tangential normal
derivative in $W^{1,1}(\gamma)$ and the remaining Taylor coefficients in
$L^1(\gamma)$ by $C\|\boldsymbol v\|_{2,\gamma}\|\bm A\|_{1,\gamma}$.
Applying \cite[Theorem~3.2]{wuZhou2026morleyStokes} and adding $r_h$ gives
\begin{equation}\label{eq:piola-strain-pairing}
 \left|
 (\eps_h\breve{\boldsymbol v},\bm A^e)_{\Gamma_h}
 -(\eps_\gamma(\boldsymbol v),\bm A)_\gamma
 \right|
 \lesssim h^2\|\boldsymbol v\|_{2,\gamma}\|\bm A\|_{1,\gamma}.
\end{equation}
The calculation extends to the stated spaces by density.

Finally, expand both discrete strains around their extended smooth strains.
The mixed terms are controlled by \eqref{eq:piola-strain-pairing} with
$\bm A=\eps_\gamma(\boldsymbol z)$ or $\eps_\gamma(\boldsymbol v)$;
the product of the defects is of order $h^2$ by
\eqref{eq:local-piola-strain}. Together with the area bound and
\eqref{eq:piola-mass-comparison}, this proves
\eqref{eq:paired-kernel-smooth-consistency}.
\end{proof}

\section{Geometric edge estimate}
\label{app:geometric-edge-error}

We prove the geometric edge estimate used in
Lemma~\ref{lem:weak-strain-interpolation}.
For $e=\partial K\cap\partial L$, write
$\jump{\bn}:=\bn_K+\bn_L$ and
$\avg{\bn}:=\tfrac12(\bn_K-\bn_L)$.

\begin{lemma}
\label{lem:geometric-edge-error}
Let $\Gamma_h$ satisfy Assumption~\ref{ass:mesh}, let
$\bm A\in H^1(\gamma;\mathbb R^{3\times3})$ be symmetric and tangential,
and let $\boldsymbol z\in\mathcal Z(\gamma)\cap\bm H_t^2(\gamma)$.
Then
\begin{equation}
 \left|\sum_{e\in\Eh}\int_e
   \jump{\breve{\boldsymbol z}\cdot\bt}\,
   \bt^T\bm A^e\avg{\bn}\,\mathrm{d}s\right|
 \lesssim h^2\|\bm A\|_{1,\gamma}\|\boldsymbol z\|_{2,\gamma}.
 \label{eq:geometric-edge-error}
\end{equation}
\end{lemma}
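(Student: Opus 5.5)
The plan is to avoid the naive bound and instead extract the leading term of $\jump{\breve{\boldsymbol z}\cdot\bt}$ explicitly, showing that it carries a derivative along the edge which can be integrated away. The naive bound does not suffice: Lemma~\ref{lem:edge-piola-tangential-comparison} together with a scaled trace inequality for $\bm A^e$ only gives $h_e^2\cdot h_e^{-1}\|\boldsymbol z\|_{1,\omega_e^\gamma}\|\bm A\|_{1,\omega_e^\gamma}$ per edge. Summed over all edges this is merely $O(h)$, so an additional order must come from cancellation.

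\emph{Step 1: expand the jump using \eqref{eq:surface-piola}.} On $e\subset\partial K$, write
\[
\breve{\boldsymbol z}_K=\mu_h\bigl(\bI-\tfrac{\bnu\otimes\bnu_K}{\bnu\cdot\bnu_K}\bigr)(\bI-d\bH)^{-1}\boldsymbol z^e .
\]
Dotting with $\bt$ produces a main part $\mu_h\,\bt\cdot\boldsymbol w$ with $\boldsymbol w:=(\bI-d\bH)^{-1}\boldsymbol z^e$, which is common to both sides except through the face-dependent factor $\mu_h$. It also produces the geometric part $-\mu_h(\bnu\cdot\bt)\,(\bnu_K\cdot\boldsymbol w)/(\bnu\cdot\bnu_K)$.

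\emph{Step 2: treat the geometric part.} Here $\bnu_K\cdot\boldsymbol w=(\bnu_K-\bnu)\cdot\boldsymbol z^e+O(h^2)|\boldsymbol z|=O(h)$. The key observation is that $\bnu\cdot\bt=\partial_{\bt}d$ along the straight edge, since $\nabla d=\bnu$. Hence the leading jump has the form $\partial_{\bt}d\cdot G_e$, with
\[
G_e:=\frac{\bnu_L\cdot\boldsymbol w}{\bnu\cdot\bnu_L}-\frac{\bnu_K\cdot\boldsymbol w}{\bnu\cdot\bnu_K}=O(h)|\boldsymbol z^e|,
\qquad \partial_{\bt}G_e=O(h)|\nabla\boldsymbol z^e|+O(1)|\boldsymbol z^e|\,\text{(curvature)} .
\]
I will then integrate by parts along $e$:
\[
\int_e\partial_{\bt}d\,G_e\,\bt^T\bm A^e\avg{\bn}\,\mathrm ds=\bigl[d\,G_e\,\bt^T\bm A^e\avg{\bn}\bigr]_{\partial e}-\int_e d\,\partial_{\bt}\bigl(G_e\,\bt^T\bm A^e\avg{\bn}\bigr)\,\mathrm ds .
\]
In the interior term we have $|d|\lesssim h^2$, and $\partial_{\bt}$ falls either on $G_e$, which is bounded by $\|\boldsymbol z\|_{1}$, or on $\bm A^e$, where it is multiplied by $G_e=O(h)$. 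Scaled trace inequalities then bound this term by $h^2\cdot h^{-1}\,\|\cdot\|_{1,\omega_e^\gamma}$-type quantities times an extra $h$, which sums to $O(h^2)\|\bm A\|_{1,\gamma}\|\boldsymbol z\|_{2,\gamma}$.

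\emph{Step 3: the $\mu_h$ jump.} For the remaining piece, $\jump{\mu_h}\,\bt\cdot\boldsymbol w$, I use that $\mu_h-1$ equals, to leading order, $-d\,\mathrm{tr}\bH$ plus $\tfrac12|\bnu-\bnu_F|^2$. The $d$-part is continuous across $e$ and so drops out of the jump. The normal part is $O(h^2)$ but has no derivative structure, so I will handle it by the same mechanism as in Step 4, or by rewriting it as a curvature term that is smooth along $e$.

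\emph{Step 4: endpoint terms (main obstacle).} The endpoint terms $d(a)\,G_e(a)\,\bt^T\bm A^e(a)\avg{\bn}$ are the main obstacle. They are $O(h^3)$ pointwise, but there are $O(h^{-2})$ vertices, and $\bm A\in H^1$ has no point values. My first attempt is to regroup all terms at a fixed vertex $a$ over the cycle of edges around $a$. At $a$ the factor $d(a)$ is common, and the $G_e(a)$ are differences of the face quantities $(\bnu_F\cdot\boldsymbol w)/(\bnu\cdot\bnu_F)$ over adjacent faces, so I would hope for a telescoping sum around the cycle once $\bm A^e(a)$ is replaced by a patch average $\bar{\bm A}_a$. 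The replacement error is controlled by a Poincar\'e inequality on $\omega_a$, and since the other factors carry $h^3$, this costs at most $h^2\|\bm A\|_{1}$. If the telescoping fails because $\bt^T\bar{\bm A}_a\avg{\bn}$ varies with $e$, the fallback is to avoid the vertex terms entirely. For this I would write the whole edge sum, using $\bm A^e$ tangential and $\bP\avg{\bn}\approx$ the conormal of the curved edge, as a $\Gamma_h$-integral of an expression $F(\bp(\cdot),\bnu_h)$ of the form used in Appendix~\ref{app:piola-geometric-estimate}. Then \cite[Theorem~3.2]{wuZhou2026morleyStokes} applies, exactly as in the proof of \eqref{eq:piola-strain-pairing}, with $\bm A\in H^1$ and $\boldsymbol z\in\bm H_t^2$ providing the $W^{1,1}$ bound required there.
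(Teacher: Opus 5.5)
Your proposal has a genuine gap. The edgewise integration by parts in Steps~2--4 does not produce the missing power of $h$, and the fallback names a tool without the step that makes it applicable.

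First, moving $\partial_{\bt}$ off $d$ creates $\partial_{\bt}$ of the edge trace of $\bm A^e$ and the point values $\bm A^e(a)$. For $\bm A\in H^1(\gamma)$ this trace lies only in $H^{1/2}(e)$, so neither quantity is controlled by $\|\bm A\|_{1,\gamma}$, and no scaled trace inequality applies.

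Second, even for smooth $\bm A$ the split gains nothing:
\begin{itemize}
\item \emph{Endpoint terms.} Assumption~\ref{ass:mesh} does not place vertices on $\gamma$, so $d(a)$ is of size $h^2$ but need not vanish. Each endpoint term is then $O(h^3)$, and with $O(h^{-2})$ vertices they sum, absent a cancellation, only to $O(h)$. Subtracting the linear interpolant of $d$ on $e$ removes them, but leaves a constant $O(h)$ tangential derivative, which is the same naive order.
\item \emph{Vertex telescoping.} It does not occur. After summation by parts around $a$, the sum is, up to sign, $d(a)\sum_i g_{F_i}(a)(c_{i-1}-c_i)$. Here $g_F:=(\bnu_F\cdot\boldsymbol w)/(\bnu\cdot\bnu_F)\approx(\bnu_F-\bnu)\cdot\boldsymbol z^e=O(h)$ has no structure from face to face. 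The weight $c_i\approx\bt^T\bar{\bm A}_a\avg{\bn}$ is the off-diagonal entry of $\bar{\bm A}_a$ in the frame of the $i$th edge, and it varies at $O(1)$ as the edge turns around $a$ unless $\bar{\bm A}_a$ is a multiple of $\bP$.
\item \emph{The $\jump{\mu_h}$ part of Step~3.} It has the same defect: it is $O(h^2)$ pointwise with no derivative structure, so every edgewise bound gives $O(h)$, and you only defer it.
\item \emph{The interior term.} Your bound $\partial_{\bt}G_e=O(h)|\nabla\boldsymbol z^e|+O(1)|\boldsymbol z^e|$ would leave it at $O(h)$ as well. The curvature part $-\boldsymbol z^e\cdot\bH\bt$ is in fact common to $K$ and $L$ and cancels, giving $\partial_{\bt}G_e=O(h)$, but this has to be shown.
\end{itemize}

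Finally, an edge sum is not a $\Gamma_h$-integral of the form $F(\bp(\cdot),\bnu_h)$. Turning it into one is the entire difficulty, and you do not say how.

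The missing idea is a Green's-formula identity that performs this conversion globally, on faces rather than edges. Because $\breve{\boldsymbol z}_K\cdot\bn_K=-\breve{\boldsymbol z}_L\cdot\bn_L$ on $e$, one has $\jump{\breve{\boldsymbol z}}=\jump{\breve{\boldsymbol z}\cdot\bt}\,\bt+(\breve{\boldsymbol z}_K\cdot\bn_K)\jump{\bn}$. Together with the symmetry of $\bm A$, this writes the negative of the edge sum as $J_1+J_2+J_3$, where
$J_1=-\sum_K\int_{\partial K}\breve{\boldsymbol z}_K\cdot\bm A^e\bn_K\,\mathrm ds$,
$J_2=\sum_e\int_e\avg{\breve{\boldsymbol z}}\cdot\bm A^e\jump{\bn}\,\mathrm ds$, and
$J_3=\frac12\sum_K\int_{\partial K}(\breve{\boldsymbol z}_K\cdot\bn_K)\,\bn_K^T\bm A^e\bn_K\,\mathrm ds$.
\begin{itemize}
\item Facewise Green's formula turns $J_1$ into $-(\eps_h\breve{\boldsymbol z},\bm A^e)_{\Gamma_h}-(\breve{\boldsymbol z},\divh\bm A^e)_{\Gamma_h}$, whose smooth counterpart vanishes on the closed surface. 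The $O(h^2)$ comparison comes from \eqref{eq:piola-strain-pairing}, which is where \cite[Theorem~3.2]{wuZhou2026morleyStokes} enters, and from the divergence-pairing argument of \cite[Lemma~3.6]{wuZhou2026morleyStokes}.
\item In $J_2$, tangentiality of $\bm A$ and $|\bP\jump{\bn}|\lesssim h_e^2$ allow $\avg{\breve{\boldsymbol z}}$ to be replaced by $\boldsymbol z^e$. The resulting conormal-jump flux of $\bm A\boldsymbol z\in W^{1,1}$ is handled by the separate edge estimate \cite[Lemma~3.8]{wuZhou2026morleyStokes}.
\item In $J_3$, the frame identity $\bn_K^T\bm A^e\bn_K=\operatorname{tr}\bm A^e-\bt^T\bm A^e\bt-\bnu_K^T\bm A^e\bnu_K$ and flux continuity cancel the first two parts. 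The remaining part is $\bnu_K^T\bm A^e\bnu_K=(\bnu_K-\bnu)^T\bm A^e(\bnu_K-\bnu)=O(h^2)$. Green's formula with $\divK\breve{\boldsymbol z}_K=0$ and the weak normal estimate \cite[Lemma~3.1]{wuZhou2026morleyStokes} then finish.
\end{itemize}

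Your observation $\bnu\cdot\bt=\partial_{\bt}d$ is the edge analogue of $\bP_K\bnu=\nabla_Kd$. It has to be exploited at the level of faces, where Green's formula needs only $\bm A\in H^1$ and leaves no point evaluations.
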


\begin{proof}
Normal-flux continuity and symmetry of $\bm A$ decompose the negative
of the edge sum in \eqref{eq:geometric-edge-error} as
\[
\begin{aligned}
 &-\sum_{K\in\Th}\int_{\partial K}
       \breve{\boldsymbol z}_K\cdot\bm A^e\bn_K\,\mathrm{d}s+\sum_{e\in\Eh}\int_e\avg{\breve{\boldsymbol z}}
              \cdot\bm A^e\jump{\bn}\,\mathrm{d}s\\
 &+\frac12\sum_{K\in\Th}\int_{\partial K}
       (\breve{\boldsymbol z}_K\cdot\bn_K)
       \bn_K^T\bm A^e\bn_K\,\mathrm{d}s =:J_1+J_2+J_3.
\end{aligned}
\]
For $J_1$, elementwise integration by parts gives
$J_1=-(\eps_h\breve{\boldsymbol z},\bm A^e)_{\Gamma_h}
     -(\breve{\boldsymbol z},\divh\bm A^e)_{\Gamma_h}$.
By \eqref{eq:surface-piola}, the proof of
\cite[Lemma~3.6]{wuZhou2026morleyStokes} applies to the divergence pairing
with the tangential field $-\bnu\times\boldsymbol z$.
Together with \eqref{eq:piola-strain-pairing} and Green's formula on
$\gamma$, this gives
$|J_1|\lesssim h^2\|\bm A\|_{1,\gamma}\|\boldsymbol z\|_{2,\gamma}$.

For $J_2$, \eqref{eq:surface-piola} gives
$|\bm A^e(\breve{\boldsymbol z}_K-\boldsymbol z^e)|
\lesssim h_K^2|\bm A^e||\boldsymbol z^e|$.
The common edge tangent and \eqref{eq:geometry-estimates} give
$|\bP\jump{\bn}|\lesssim h_e^2$.
This bound and the scaled trace inequality thus allow
$\avg{\breve{\boldsymbol z}}$ to be replaced by $\boldsymbol z^e$
in $J_2$ up to an error bounded by
$Ch^2\|\bm A\|_{1,\gamma}\|\boldsymbol z\|_{2,\gamma}$.
The resulting flux is bounded by
\cite[Lemma~3.8]{wuZhou2026morleyStokes}, applied to
$\bm A\boldsymbol z\in[W^{1,1}(\gamma)]^3$, so
$|J_2|\lesssim h^2\|\bm A\|_{1,\gamma}\|\boldsymbol z\|_{2,\gamma}$.

For $J_3$, the orthonormal edge frame $(\bt,\bn_K,\bnu_K)$ gives
$\bn_K^T\bm A^e\bn_K
 =\operatorname{tr}\bm A^e-\bt^T\bm A^e\bt
  -\bnu_K^T\bm A^e\bnu_K$.
The first two terms on the right are common to both faces incident to
$e$, so their contributions to $J_3$ cancel by normal-flux continuity.
Tangentiality of $\bm A$ and $\divK\breve{\boldsymbol z}_K=0$ then yield
\[
\begin{aligned}
 J_3
 &=-\frac12\sum_{K\in\Th}\int_{\partial K}
       (\breve{\boldsymbol z}_K\cdot\bn_K)
       \bnu_K^T\bm A^e\bnu_K\,\mathrm{d}s\\
 &=-\frac12\sum_{K\in\Th}\bigl(\breve{\boldsymbol z}_K,
   \nabla_K[(\bnu_K-\bnu^e)^T\bm A^e(\bnu_K-\bnu^e)]\bigr)_K.
\end{aligned}
\]
The product rule, tangentiality, and
\eqref{eq:geometry-estimates} show that $J_3$ differs from
$(\bnu_h-\bnu^e,(\bm A\bH\boldsymbol z)^e)_{\Gamma_h}$
by at most $Ch^2\|\bm A\|_{1,\gamma}\|\boldsymbol z\|_{2,\gamma}$.
The weak normal estimate \cite[Lemma~3.1]{wuZhou2026morleyStokes},
applied to $\bm A\bH\boldsymbol z\in\bm H_t^1(\gamma)$, therefore gives
$|J_3|\lesssim h^2\|\bm A\|_{1,\gamma}\|\boldsymbol z\|_{2,\gamma}$.

Combining the bounds for $J_1$, $J_2$, and $J_3$ proves
\eqref{eq:geometric-edge-error}.
\end{proof}


\begingroup
\small
\begin{thebibliography}{10}

\bibitem{antonietti2014stream}
Paola~F. Antonietti, Louren{\c c}o Beir{\~a}o~da Veiga, David Mora, and Marco
  Verani.
\newblock A stream virtual element formulation of the {Stokes} problem on
  polygonal meshes.
\newblock {\em SIAM Journal on Numerical Analysis}, 52(1):386--404, 2014.

\bibitem{arroyo2009relaxation}
Marino Arroyo and Antonio DeSimone.
\newblock Relaxation dynamics of fluid membranes.
\newblock {\em Physical Review E}, 79(3):031915, 2009.

\bibitem{beirao2019stokescomplex}
Louren{\c c}o Beir{\~a}o~da Veiga, David Mora, and Giuseppe Vacca.
\newblock The {Stokes} complex for virtual elements with application to
  {Navier--Stokes} flows.
\newblock {\em Journal of Scientific Computing}, 81:990--1018, 2019.

\bibitem{bonito2020divergence}
Andrea Bonito, Alan Demlow, and Martin Licht.
\newblock A divergence-conforming finite element method for the surface
  {Stokes} equation.
\newblock {\em SIAM Journal on Numerical Analysis}, 58(5):2764--2798, 2020.

\bibitem{brandner2020error}
Philip Brandner and Arnold Reusken.
\newblock Finite element error analysis of surface {Stokes} equations in stream
  function formulation.
\newblock {\em ESAIM: Mathematical Modelling and Numerical Analysis},
  54(6):2069--2097, 2020.

\bibitem{brueers2026pressure}
Tim Br{\"u}ers, Christoph Lehrenfeld, Tim van Beeck, and Max Wardetzky.
\newblock Releasing the pressure: High-order surface flow discretizations via
  discrete {Helmholtz--Hodge} decompositions.
\newblock arXiv:2603.27714v2, 2026.
\newblock DOI: \url{https://doi.org/10.48550/arXiv.2603.27714}.

\bibitem{brueers2025streamfunction}
Tim Br{\"u}ers, Christoph Lehrenfeld, and Max Wardetzky.
\newblock Streamfunction-vorticity formulation for incompressible viscid and
  inviscid flows on general surfaces.
\newblock arXiv:2512.20763, 2025.

\bibitem{chen2024extrinsic}
Chunyu Chen, Xuehai Huang, and Huayi Wei.
\newblock Virtual element methods without extrinsic stabilization.
\newblock {\em SIAM Journal on Numerical Analysis}, 62(1):567--591, 2024.

\bibitem{chinosi2016virtual}
Claudia Chinosi and L.~Donatella Marini.
\newblock Virtual element method for fourth order problems: {$L^2$}-estimates.
\newblock arXiv:1601.07484, 2016.
\newblock DOI: \url{https://doi.org/10.48550/arXiv.1601.07484}.

\bibitem{demlow2024tangential}
Alan Demlow and Michael Neilan.
\newblock A tangential and penalty-free finite element method for the surface
  {Stokes} problem.
\newblock {\em SIAM Journal on Numerical Analysis}, 62(1):248--272, 2024.

\bibitem{demlow2026taylorhood}
Alan Demlow and Michael Neilan.
\newblock A {Taylor--Hood} finite element method for the surface {Stokes}
  problem without penalization.
\newblock {\em SIAM Journal on Numerical Analysis}, 64(2):565--600, 2026.

\bibitem{frerichs2022pressure}
Derk Frerichs and Christian Merdon.
\newblock Divergence-preserving reconstructions on polygons and a really
  pressure-robust virtual element method for the {Stokes} problem.
\newblock {\em IMA Journal of Numerical Analysis}, 42(1):597--619, 2022.

\bibitem{frittelli2018surfacevem}
Massimo Frittelli and Ivonne Sgura.
\newblock Virtual element method for the {Laplace--Beltrami} equation on
  surfaces.
\newblock {\em ESAIM: Mathematical Modelling and Numerical Analysis},
  52(3):965--993, 2018.

\bibitem{hardering2025parametric}
Hanne Hardering and Simon Praetorius.
\newblock Parametric finite-element discretization of the surface {Stokes}
  equations: Inf-sup stability and discretization error analysis.
\newblock {\em IMA Journal of Numerical Analysis}, 45(5):2948--2987, 2025.

\bibitem{henle2010hydrodynamics}
Mark~L. Henle and Alex~J. Levine.
\newblock Hydrodynamics in curved membranes: The effect of geometry on
  particulate mobility.
\newblock {\em Physical Review E}, 81(1):011905, 2010.

\bibitem{holst2012geometric}
Michael Holst and Ari Stern.
\newblock Geometric variational crimes: {Hilbert} complexes, finite element
  exterior calculus, and problems on hypersurfaces.
\newblock {\em Foundations of Computational Mathematics}, 12(3):263--293, 2012.

\bibitem{jankuhn2018incompressible}
Thomas Jankuhn, Maxim~A. Olshanskii, and Arnold Reusken.
\newblock Incompressible fluid problems on embedded surfaces: Modeling and
  variational formulations.
\newblock {\em Interfaces and Free Boundaries}, 20(3):353--377, 2018.

\bibitem{kone2026divergence}
Yerim Kone, Michael Neilan, and David Poling.
\newblock A divergence-free {Scott--Vogelius} finite element method for the
  surface {Stokes} problem.
\newblock arXiv:2606.07840, 2026.

\bibitem{meng2023stokesstability}
Jian Meng, Louren{\c c}o Beir{\~a}o~da Veiga, and Lorenzo Mascotto.
\newblock Stability and interpolation properties for {Stokes}-like virtual
  element spaces.
\newblock {\em Journal of Scientific Computing}, 94(3):56, 2023.

\bibitem{neilan2025c0}
Michael Neilan and Hongzhi Wan.
\newblock A {$C^0$} interior penalty method for the stream function formulation
  of the surface {Stokes} problem.
\newblock {\em ESAIM: Mathematical Modelling and Numerical Analysis},
  59(2):1177--1211, 2025.

\bibitem{olshanskii2021infsup}
Maxim~A. Olshanskii, Arnold Reusken, and Alexander Zhiliakov.
\newblock Inf-sup stability of the trace {$\mathbf{P}_2$--$P_1$} {Taylor--Hood}
  elements for surface {PDEs}.
\newblock {\em Mathematics of Computation}, 90(330):1527--1555, 2021.

\bibitem{percell1976clough}
Peter Percell.
\newblock On cubic and quartic {Clough--Tocher} finite elements.
\newblock {\em SIAM Journal on Numerical Analysis}, 13(1):100--103, 1976.

\bibitem{reusken2020stream}
Arnold Reusken.
\newblock Stream function formulation of surface {Stokes} equations.
\newblock {\em IMA Journal of Numerical Analysis}, 40(1):109--139, 2020.

\bibitem{russo2024stabilization}
Alessandro Russo and N.~Sukumar.
\newblock Quantitative study of the stabilization parameter in the virtual
  element method.
\newblock In {\em Nonlinear Differential Equations and Applications}, pages
  259--278. Springer, 2024.

\bibitem{walker2018felicity}
Shawn~W. Walker.
\newblock {FELICITY}: A {Matlab/C++} toolbox for developing finite element
  methods and simulation modeling.
\newblock {\em SIAM Journal on Scientific Computing}, 40(2):C234--C257, 2018.

\bibitem{walker2022kirchhoff}
Shawn~W. Walker.
\newblock The {Kirchhoff} plate equation on surfaces: The surface
  {Hellan--Herrmann--Johnson} method.
\newblock {\em IMA Journal of Numerical Analysis}, 42(4):3094--3134, 2022.

\bibitem{wu2025biharmonic}
Shuonan Wu and Hao Zhou.
\newblock A stabilized nonconforming finite element method for the surface
  biharmonic problem.
\newblock {\em SIAM Journal on Numerical Analysis}, 63(4):1642--1665, 2025.

\bibitem{wuZhou2026morleyStokes}
Shuonan Wu and Hao Zhou.
\newblock Stabilized {Morley} {FEM} for surface {Stokes} in stream-function
  formulation: Optimal convergence via a new geometric estimate, 2026.
\newblock arXiv:2607.26664.

\end{thebibliography}
\endgroup
\end{document}